\numberwithin{equation}{section}
\newtheorem{thm}{Theorem}[section]
\newtheorem*{thm*}{Theorem}
\newtheorem{lm}[thm]{Lemma}
\newtheorem{cor}[thm]{Corollary}
\newtheorem*{cor*}{Corollary}
\newtheorem{prop}[thm]{Proposition}
\newtheorem*{conj*}{Conjecture}
\theoremstyle{remark}
\theoremstyle{definition}
\newtheorem*{defn*}{Definition}
\newtheorem{Remark}[thm]{Remark}
\newtheorem{I_Remark*}{Remark}
\newcommand{\nc}{\newcommand}
\newcommand{\beq}{\begin{equation}}
\newcommand{\eeq}{\end{equation}}
\newcommand{\bpmx}{\begin{pmatrix}}
\newcommand{\epmx}{\end{pmatrix}}
\newcommand{\bbmx}{\begin{bmatrix}}
\newcommand{\ebmx}{\end{bmatrix}}
\def\parref#1{\ref{#1}}
\def\thmref#1{Theorem~\parref{#1}}
\def\propref#1{Proposition~\parref{#1}}
\def\corref#1{Corollary~\parref{#1}}
\def\lmref#1{Lemma~\parref{#1}}
\def\makeop#1{\expandafter\def\csname#1\endcsname
  {\mathop{\rm #1}\nolimits}\ignorespaces}
\def\makebb#1{\expandafter\def
  \csname bb#1\endcsname{{\mathbb{#1}}}\ignorespaces}
\def\makebf#1{\expandafter\def\csname bf#1\endcsname{{\bf
      #1}}\ignorespaces}
\def\makegr#1{\expandafter\def
  \csname gr#1\endcsname{{\mathfrak{#1}}}\ignorespaces}
\def\makescr#1{\expandafter\def
  \csname scr#1\endcsname{{\EuScript{#1}}}\ignorespaces}
\def\makecal#1{\expandafter\def\csname cal#1\endcsname{{\mathcal
      #1}}\ignorespaces}
\def\doLetters#1{#1A #1B #1C #1D #1E #1F #1G #1H #1I #1J #1K #1L #1M
                 #1N #1O #1P #1Q #1R #1S #1T #1U #1V #1W #1X #1Y #1Z}
\def\doletters#1{#1a #1b #1c #1d #1e #1f #1g #1h #1i #1j #1k #1l #1m
                 #1n #1o #1p #1q #1r #1s #1t #1u #1v #1w #1x #1y #1z}
    \def\setminus{\smallsetminus}
\def\cB{\EuScript B}
\def\cL{{\mathcal L}}
\def\cS{{\mathcal S}}
\def\cV{{\mathcal V}}
\def\ot{\otimes}
\def\hookto{\hookrightarrow}
\def\longto{\longrightarrow}
\def\ol{\overline}  \nc{\opp}{\mathrm{opp}} \nc{\ul}{\underline}
\def\XYmatrix{\xymatrix@M=8pt} % make \xymatrix not too cluttered
\def\ncmd{\newcommand}
\ncmd{\xysubset}[1][r]{\ar@<-2.5pt>@{^(-}[#1]\ar@<2.5pt>@{_(-}[#1]}
\ncmd{\XYmatrixc}[1]{\vcenter{\XYmatrix{#1}}}
\ncmd{\xyto}[1][r]{\ar@{->}[#1]}
\ncmd{\xyinj}[1][r]{\ar@{^(->}[#1]}
\ncmd{\xysurj}[1][r]{\ar@{->>}[#1]}
\ncmd{\xyline}[1][r]{\ar@{-}[#1]}
\ncmd{\xydotsto}[1][r]{\ar@{.>}[#1]}
\ncmd{\xydots}[1][r]{\ar@{.}[#1]}
\ncmd{\xyleadsto}[1][r]{\ar@{~>}[#1]}
\ncmd{\xyeq}[1][r]{\ar@{=}[#1]} \ncmd{\xyequal}[1][r]{\ar@{=}[#1]}
\ncmd{\xyequals}[1][r]{\ar@{=}[#1]}
\ncmd{\xymapsto}[1][r]{l\ar@{|->}[#1]}\ncmd{\xyimplies}[1][r]{\ar@{=>}[#1]}
\ncmd{\xyiso}{\ar[r]_-{\sim}}
\def\injxy{\ar@{^(->}}
\newcommand{\pMX}[4]{\begin{pmatrix}
{#1}& {#2}\\
{#3}&{#4}\end{pmatrix} }
\newcommand{\seesaw}[4]{{#1}\ar@{-}[rd]\ar@{-}[d]&{#2}\ar@{-}[d]\\
{#3}\ar@{-}[ru]&{#4}}
\def\x{{\times}}
\newcommand\stt[1]{\left\{#1\right\}}
\renewcommand\Re{\text{Re}\,}
\newcommand{\BigWedge}{\mathord{\adjustbox{valign=B,totalheight=.6\baselineskip}{$\bigwedge$}}}
\def\SO{{\rm SO}}
\def\GL{{\rm GL}}
\def\Mat{{\rm Mat}}
\def\b{\bar}
\def\t{\tilde}
\def\h{\hat}
\def\a{\alpha}
\def\bt{\boxtimes}
\def\la{\lambda}
\def\Wedge{\BigWedge}
\title{On the Rankin-Selberg $L$-factors for $\SO_{5}\x\GL_2$}
\author{Yao Cheng}
\subjclass[2020]{11S40, 11F70}
\keywords{$L$-factors, Rankin-Selberg integrals, Novodvorsky's zeta integrals}
\address{No. 151, Yingzhuan Road, Tamsui District, New Taipei City 251, Taiwan (R.O.C),  Lui-Hsien Memorial 
Science Hall.}
\email{briancheng@mail.tku.edu.tw}
\begin{document}
\maketitle
\begin{abstract}
Let $\pi$ and $\tau$ be a smooth generic representation of $\SO_5$ and $\GL_2$ respectively over 
a non-archimedean local field. Assume that $\pi$ is irreducible and $\tau$ is irreducible or induced of Langlands' type. 
We show that the $L$- and $\epsilon$-factors attached to $\pi\x\tau$ defined by the Rankin-Selberg integrals 
and the associated Weil-Deligne representation coincide. Similar compatibility results are also obtained for the local 
factors defined through the Novodvorsky's local zeta integrals that attached to generic representations of 
${\rm GSp}_4\x\GL_2$.
\end{abstract}

\section{Introduction}
Let $F$ be a finite extension over $\bbQ_p$ and $\psi$ be a non-trivial additive character of $F$. 
Let $\pi$ and $\tau$ be an irreducible smooth generic representation of split $\SO_{2n+1}(F)$ and $\GL_r(F)$ respectively. 
We denote by $\phi_\pi$ the $L$-parameter attached to $\pi$ defined by Jiang-Soudry 
(\cite{JiangSoudry2003}, \cite{JiangSoudry2004}) and $\phi_\tau$ the $L$-parameter attached 
to $\tau$ under the local Langlands correspondence for $\GL_r$ 
(\cite{HarrisTaylor2001}, \cite{Henniart2000}, \cite{Scholze2013}). Then to $\pi\x\tau$ and $\psi$, one can define the (local) 
$L$-, $\epsilon$- and $\gamma$-factors through the following approaches: 
\begin{itemize}
\item (WD) the associated Weil-Delinge representation (\cite{Tate1979}); 
\item (LS) the Langlands-Shahidi method (\cite{Shahidi1990});
\item (RS) the Rankin-Selberg integrals (\cite{Soudry1993}). \quad\\
\end{itemize}

While it is natural to ask whether these approaches define the same local factors, the equalities among these local 
factors are actually essential for studying analytic properties of Langlands' automorphic $L$-functions and have
applications to the theory of automorphic representations. For instance, in a different setting, Yamana (\cite{Yamana2014})
proved that the $L$-factors defined in \cite{GPSR1987}, \cite{PSR1986} and \cite{LapidRallis2005} agree.
Consequently, he was able to show that non-vanishing of global theta liftings is characterized in terms of the analytic 
properties of the $complete$ $L$-functions and the occurrence in the local theta correspondence. As another example, 
Ikeda (\cite{Ikeda1992}) proved that the $L$-factors defined in \cite{PSR1987} agree with the one defined by the 
associated Weil-Deligne representations, provided that the representations are local components of global 
representations. As a result, he was able to determine the poles of the $triple$ $product$ $L$-functions and 
supply some evidents that consistent with "the Langlands' philosophy".\\

In the literature, Jiang-Soudry proved that the local factors attached to $\pi\x\tau$ and $\psi$ defined by 
(WD) and (LS) coincide in \cite{JiangSoudry2004}. On the other hand, the $\gamma$-factors attached to 
$\pi\x\tau$ and $\psi$ defined by (LS) and (RS) also essentially coincide according to \cite{Soudry2000} and 
\cite{Kaplan2015}. Therefore, to settle the question arising in the first paragraph, it remains to show that the $L$-factors 
attached to $\pi\x\tau$ given by (RS) agree with the one given by (WD) or (LS). However, this question is usually more 
involved, for the $L$-factors defined by (LS) are designed to have right properties with respect to the local Langlands 
correspondence, whereas the $L$-factors defined by (RS) are designed to control poles the local Rankin-Selberg 
integrals. While both approaches give essentially the same $\gamma$-factors, it is not at all clear that they give the 
same $L$-factors. In this note, we resolve this question when $n=r=2$. In fact, our results allow 
$\tau$ to be reducible, and in addition, we also obtain similar results for the closely related local integrals attached to 
generic representations of ${\rm GSp}_4\x\GL_2$ introduced by Novodvorsky in \cite{Novodvorsky1979}.

\subsection{Main Results}\label{SS:main result}
Let us describe our main results in this subsection. 

\subsubsection{Local factors via (WD) and (RS)}
Using $\phi_\pi, \phi_\tau, \psi$ and the natural maps
\[
{}^L\SO^0_{2n+1}(\bbC)\x\GL_r(\bbC)
=
{\rm Sp}_{2n}(\bbC)\x\GL_r(\bbC)
\hookto
\GL_{2n}(\bbC)\x\GL_r(\bbC)
\overset{\otimes}{\longto}\GL_{2nr}(\bbC)
\]
one can define the $L$-factor $L(s, \phi_\pi\ot\phi_\tau)$ and $\epsilon$-factor $\epsilon(s,\phi_\pi\ot\phi_\tau,\psi)$ as 
in \cite{Tate1979}. On the other side, one can define the (local) $Rankin$-$Selberg$ $L$-factor
 $L(s,\pi\x\tau)$ as a "g.c.d." of poles of local Rankin-Selberg integrals (\cite{Soudry1993}) for $good$ $sections$
 (cf. \S\ref{SS:RS local factor}). Granted the $L$- and $\gamma$-factors, the $Rankin$-$Selberg$ $\epsilon$-factors 
 $\epsilon(s,\pi\x\tau,\psi)$ are defined. We indicate that these analytic local factors can be defined even when $\tau$ is 
reducible.

\subsubsection{Induced of Langlands' type}\label{SSS:induced of Langlands' type}
As pointed out, we also consider reducible representations. To describe them, let $\tau$ be a smooth representation of 
$\GL_2(F)$. We denote by $\b{\tau}$ the unique non-zero irreducible quotient of $\tau$ (if exists). 
Certainly, $\tau\cong\b{\tau}$ when $\tau$ is irreducible. On the other hand, if $\tau$ is 
$induced$ $of$ $Langlands'$ $type$, namely, $\tau$ is a normalized induced representation of $\GL_2(F)$
inducing from characters (not necessary unitary) $\chi_1$ and $\chi_2$ of $F^\x$ with $|\chi_1\chi_2^{-1}(\varpi)|\le 1$, 
then $\b{\tau}$ is also defined. Here $\varpi$ is a fixed uniformizer of $F$.  Moreover, $\tau$ is reducible precise when  
$\chi_1\chi_2^{-1}=|\cdot|_F$, where $|\cdot|_F$ is the absolute value on $F$ normalized so that $|\varpi|_F=q^{-1}$ with
$q$ the size of the residue field of $F$. In this case, $\b{\tau}\cong\chi$ is one-dimensional with $\chi$ the character
of $F^\x$ given by $\chi=\chi_1|\cdot|_F^{-\frac{1}{2}}$, and we regard it as a character of $\GL_2(F)$ via composing 
with the determinant map. We indicate that every irreducible induced representations of $\GL_2(F)$ can be arranged into
induced of Langland's type, and every representation that is induced of Langlands' type admits a unique Whittaker model
(\cite{JacquetShalika1983}).\\

With these knowledge, our first result can be stated as follows.

\begin{thm}\label{T:main}
Let $\pi$ be an irreducible smooth generic representation of $\SO_5(F)$ and $\tau$ be a smooth 
generic representation of $\GL_2(F)$ that is irreducible or induced of Langlands' type.  Then we have
%\begin{equation}\label{E:RS=Gal r=2}
\[
L(s,\pi\x\tau)=L(s,\phi_\pi\ot\phi_{\b{\tau}})
\quad
and
\quad
\epsilon(s,\pi\x\tau,\psi)=\epsilon(s,\phi_\pi\ot\phi_{\b{\tau}},\psi).
\]
%\end{equation}
\end{thm}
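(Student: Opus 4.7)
The strategy is to leverage the already-known $\gamma$-factor equality and reduce the problem to the $L$-factor equality. Combining the Jiang-Soudry identification (WD) $=$ (LS) in \cite{JiangSoudry2004} with the agreement (RS) $=$ (LS) for $\gamma$-factors from \cite{Soudry2000} and \cite{Kaplan2015} yields
\[
\gamma(s,\pi\x\tau,\psi) \;=\; \gamma(s,\phi_\pi\ot\phi_{\b{\tau}},\psi).
\]
Once the equality $L(s,\pi\x\tau) = L(s,\phi_\pi\ot\phi_{\b\tau})$ is proved, the $\epsilon$-factor equality follows immediately from the defining relation $\epsilon(s,\cdot,\psi) = \gamma(s,\cdot,\psi)\cdot L(s,\cdot)/L(1-s,\cdot^\vee)$ applied to both sides. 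Thus the entire burden lies in the $L$-factor equality.

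The plan is to verify both divisibilities among the inverse polynomials $L(s,\cdot)^{-1}\in\bbC[q^{\pm s}]$. For the direction that each pole of $L(s,\phi_\pi\ot\phi_{\b\tau})$ appears as a pole of some Rankin-Selberg integral for good sections, I would construct, for each prescribed pole, an explicit good section $f_s$ and Whittaker function $W_\pi$ on $\SO_5$ realizing it. The core tool is a Jacquet-module analysis: the poles on the Weil-Deligne side correspond to characters occurring in the normalized Jacquet modules of $\pi$ and of $\b\tau$ (read off from their Langlands data), and such characters can be paired against appropriately chosen sections to force the desired pole of the local integral $Z(s,W_\pi,f_s)$.

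For the reverse divisibility I would proceed by case analysis on the Langlands classification of $\pi$ and $\tau$. When $\tau$ is induced of Langlands' type from $\chi_1\ot\chi_2$, the multiplicativity of $\gamma$-factors under parabolic induction on the $\GL_2$-side, together with the known compatibility for $\SO_5\x\GL_1$, decomposes the problem into the character case. When $\pi$ is non-supercuspidal, the explicit description of $\phi_\pi$ via the Jiang-Soudry descent \cite{JiangSoudry2003} combined with multiplicativity of local factors on the $\SO_5$-side reduces the analysis further. In the remaining supercuspidal-supercuspidal case both $L$-factors are forced to be trivial: on the Weil-Deligne side because $\phi_\pi\ot\phi_{\b\tau}$ has no fixed vectors or relevant subquotients, and on the Rankin-Selberg side by a support argument on the Whittaker function combined with the fact that the good-section Mellin transforms are then polynomial.

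The main obstacle I expect is the reducible $\tau$ case, where $\chi_1\chi_2^{-1}=|\cdot|_F$ and $\b\tau=\chi$ is one-dimensional. Here the Rankin-Selberg integral is defined using the full reducible $\tau$ while the Weil-Deligne factor only sees the quotient $\b\tau$; one must verify that the kernel of $\tau\surjto\b\tau$ does not introduce spurious poles. The use of \emph{good sections} is essential---they are designed precisely to capture exceptional poles of this type---and matching them with the structure of $\chi$ (viewed as a character of $\GL_2$ via the determinant) requires careful tracking of how the short exact sequence $0\to\chi\cdot\mathrm{St}\to\tau\to\chi\to 0$ interacts with the Rankin-Selberg integral, together with the possibility that the Langlands data of $\pi$ and $\tau$ share common characters, producing coincident poles whose multiplicities must match exactly on both sides. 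The Novodvorsky $\GSp_4\x\GL_2$ analog should then follow by transferring the $\SO_5\x\GL_2$ result via the isogeny between $\GSp_4$ and $\GSO_5$.
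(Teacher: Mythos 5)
Your high-level framing (use the known $\gamma$-factor compatibility, reduce to proving $L(s,\pi\x\tau)=L(s,\phi_\pi\ot\phi_{\bar\tau})$) is correct and matches the paper's entry point, but the route you propose for the $L$-factor identity has a genuine gap and is substantially different from what the paper does.

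\textbf{What the paper does.} The paper first establishes $L(s,\pi\x\tau)=L^{\rm Nov}(s,\pi\x\tau)$ by explicitly matching the Rankin--Selberg integrals with Novodvorsky's $\GSp_4\x\GL_2$ zeta integrals through the accidental isomorphisms $\mathrm{PGSp}_4\cong\SO_5$ and $\bbG_m\backslash\mathrm{G}(\SL_2\x\SL_2)\cong\SO_4$; this imports the LPSZ/Loeffler results and settles the entire non-supercuspidal $\tau$ case at once. For supercuspidal $\tau$ it proceeds by a pole-location argument: an upper bound on $L(s,\pi\x\tau)$ from the asymptotic expansion of Whittaker functions on $\SO_5$ (Lemma~\ref{L:upper bound}), the observation that once $L(s,\pi\x\tau)$ and $L(1-s,\pi\x\tau^*)$ have no common poles the $\gamma$-factor equality forces the $L$-factor equality, and---crucially---a deformation argument with an auxiliary real parameter $\la$ and formal Laurent series for the one non-tempered, non-endoscopic family $\pi\cong\omega_\sigma^{-1}\rtimes\sigma$ where the ``no common poles'' criterion genuinely fails.

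\textbf{The gap in your proposal.} You invoke ``multiplicativity of local factors on the $\SO_5$-side'' to reduce the case of non-supercuspidal $\pi$ and supercuspidal $\tau$. This is the step that does not work. Multiplicativity holds for the Rankin--Selberg \emph{$\gamma$-factor} (this is Soudry's result), but there is no a priori multiplicativity for the Rankin--Selberg \emph{$L$-factor} under parabolic induction of $\pi$: the definition of $L(s,\pi\x\tau)$ as a g.c.d.\ of poles of good-section integrals does not factor along the Levi decomposition, and in fact proving that the $L$-factor behaves as expected under induction is tantamount to the theorem you are trying to prove, so the reduction is circular. Concretely, the hardest case the paper treats is $\pi\cong\omega_\sigma^{-1}\rtimes\sigma$ with $\sigma$ supercuspidal and $\omega_\sigma$ non-unitary, $\tau$ an unramified twist of $\sigma^\vee$: here $\pi$ is neither supercuspidal nor tempered, $L(s,\pi\x\tau)$ and $L(1-s,\pi\x\tau^*)$ \emph{do} have common poles, and the paper's deformation argument (holomorphicity in the auxiliary parameter, specializing at $\la=0$) is essential. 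Your case analysis collapses this situation into ``non-supercuspidal $\pi$, reduce by multiplicativity,'' which is exactly where the claim has no footing. The ``supercuspidal-supercuspidal'' case you discuss at the end is actually the easy case (both $L$-factors are $1$, since $\pi$ supercuspidal is tempered and $\phi_\pi$ is irreducible), so it does not carry the difficulty.

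A secondary, smaller point: the forward direction ``construct explicit good sections hitting each Weil--Deligne pole via Jacquet-module analysis'' is asserted but not substantiated; the paper avoids doing this altogether, getting the lower bound on the $L$-factor for free from the $\gamma$-factor compatibility once the upper bound and pole-separation are in hand.
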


\begin{Remark}
When $n\le 2$ and $r=1$, similar identities can be deduced from the results in the literature. Indeed, when 
$n=1$, $\SO_3\cong{\rm PGL}_2$ and the Rankin-Selberg integrals reduce to the zeta integrals studied by 
Jacquet-Langlands in \cite{JLbook}. In this case, the assertion is well-known.
When $n=2$, on the other hand, $\SO_5\cong{\rm PGSp}_4$ and the integrals 
become the zeta integrals (for ${\rm GSp}_4$) considered by Novodvorsky in \cite{Novodvorsky1979}. 
In this case, the assertion follows from the results in \cite{Takloo-Bighash2000}, \cite{Soudry2000}, 
\cite{JiangSoudry2004}, \cite{RobertsSchmidt2007}, \cite{Kaplan2015} and \cite{Tran2019}.
%We shall provide a proof for this case ($n=2$ and $r=1$) as we can't find a proper reference.
\end{Remark}

\subsubsection{Local factors via Novodvorsky's local integrals}
As noticed by Soudry in \cite[Section 0]{Soudry1993}, when $n=r=2$, the Rankin-Selberg integrals are essentially equal
to the (local) zeta integrals for ${\rm GSp}_4\x\GL_2$ introduced by Novodvorsky in \cite{Novodvorsky1979}.
These zeta integrals were subsequently studied by Soudry in \cite{Soudry1984} (see also \cite{LPSZ2021} and
\cite{Loeffler}). In particular, one can define the $L$-factors $L^{\rm Nov}(s,\pi\x\tau)$
and the $\epsilon$-factors $\epsilon^{\rm Nov}(s,\pi\x\tau,\psi)$ by using these zeta integrals (cf. \S\ref{SSS:zeta int}). 
Here $\pi$ (resp. $\tau$) is a smooth generic representation of ${\rm GSp}_4(F)$ (resp. $\GL_2(F)$). 
In this note, we explicate the relation between these two integrals and obtain the following result.

\begin{thm}\label{T:main'}
Let $\pi$ be an irreducible smooth generic representation of ${\rm GSp}_4(F)$ and $\tau$ be a smooth 
generic representation of $\GL_2(F)$ that is irreducible or induced of Langlands' type. Suppose that the central character
of $\pi$ is a square. Then we have
\[
L^{\rm Nov}(s,\pi\x\tau)=L(s,\phi_\pi\ot\phi_{\b{\tau}})
\quad
and
\quad
\epsilon^{\rm Nov}(s,\pi\x\tau,\psi)=\epsilon(s,\phi_\pi\ot\phi_{\b{\tau}},\psi).
\]
Here $\phi_\pi$ is the $L$-parameter of $\pi$ under the local Langlands correspondence for ${\rm GSp}_4$ 
established in \cite{GanTakeda2011}.
\end{thm}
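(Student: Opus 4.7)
The plan is to deduce Theorem~\ref{T:main'} from Theorem~\ref{T:main} by exploiting the exceptional isogeny $\PGsp_4 \cong \SO_5$, after using a twist to normalize the central character of $\pi$ to be trivial.

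Since $\omega_\pi$ is a square, write $\omega_\pi = \mu^2$ for some character $\mu$ of $F^\times$, and let $\lambda:{\rm GSp}_4 \to \Gm$ denote the similitude character. Set $\pi_0 := \pi \otimes (\mu^{-1}\circ\lambda)$. Then $\pi_0$ has trivial central character and so descends to an irreducible smooth generic representation $\t\pi$ of $\PGsp_4(F) = \SO_5(F)$. The key reductions I would establish are:
\begin{enumerate}
\item[(i)] An integral identity matching the Novodvorsky family for $(\pi,\tau)$ with the Soudry Rankin--Selberg family for $(\t\pi,\tau\otimes\mu)$; and
\item[(ii)] An $L$-parameter identity
$\phi_\pi \otimes \phi_\tau \;\cong\; \phi_{\t\pi} \otimes \phi_{\tau\otimes\mu}$
as representations of $WD_F$ into $\GL_4(\bbC)\otimes\GL_2(\bbC)$.
\end{enumerate}

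For (i), I would carry out the change of variables that makes precise Soudry's observation (mentioned in \cite[\S0]{Soudry1993}) that the two families of local integrals are essentially equal. Writing $g\in{\rm GSp}_4(F)$ as a product $g_0 \cdot z_\lambda$ with $g_0\in\Sp_4(F)$ and $z_\lambda$ a diagonal similitude factor, the $\pi$-Whittaker function transforms via $W_\pi(g) = W_{\pi_0}(g_0)\,\mu(\lambda(g))$ after twisting, while the residual $\lambda$-dependence can be absorbed into the $\GL_2$-Whittaker function of $\tau$, converting it to one for $\tau\otimes\mu$. This identifies the two families of local zeta integrals (over good sections) up to a harmless shift of the complex variable, which in turn forces equality of the g.c.d.~of poles and of the $\gamma$-factors. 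Consequently,
\[
L^{\mathrm{Nov}}(s,\pi\x\tau) \,=\, L(s,\t\pi\x(\tau\otimes\mu))
\quad\text{and}\quad
\epsilon^{\mathrm{Nov}}(s,\pi\x\tau,\psi)\,=\,\epsilon(s,\t\pi\x(\tau\otimes\mu),\psi).
\]
Theorem~\ref{T:main} applied to $(\t\pi,\tau\otimes\mu)$ then rewrites the right-hand sides in Galois-theoretic terms as $L(s,\phi_{\t\pi}\otimes\phi_{\overline{\tau\otimes\mu}})$ and $\epsilon(s,\phi_{\t\pi}\otimes\phi_{\overline{\tau\otimes\mu}},\psi)$ respectively.

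For (ii), I would invoke the characterization of the Gan--Takeda local Langlands correspondence for ${\rm GSp}_4$ in \cite{GanTakeda2011}: under the central isogeny ${\rm GSp}_4 \twoheadrightarrow \PGsp_4$ and the similitude twist $\pi\mapsto\pi\otimes(\mu^{-1}\circ\lambda)$, the standard 4-dimensional parameter transforms as $\phi_\pi \mapsto \phi_{\t\pi}\otimes\mu^{-1}$ with $\phi_{\t\pi}$ landing in $\Sp_4(\bbC) = {}^L\SO^0_5(\bbC)$. Tensoring with $\phi_\tau$ and absorbing $\mu^{-1}$ into the $\GL_2$-factor using $\phi_{\tau\otimes\mu} = \phi_\tau\otimes\mu$, one obtains the claimed isomorphism of 8-dimensional $WD_F$-representations, and the theorem follows by combining (i), Theorem~\ref{T:main}, and (ii).

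The main obstacle will be step~(ii): verifying that the Gan--Takeda parametrization of $\pi$ and the Jiang--Soudry parametrization of $\t\pi$ agree under the descent $\pi\leadsto\t\pi$. Both are characterized by compatibility with twisted $\gamma$-factors (via Langlands--Shahidi and, through the already-established (LS)$=$(RS) identity, via the Rankin--Selberg method as well), so the strategy is to verify that $\phi_{\t\pi}\otimes\mu^{-1}$ satisfies the $\gamma$-factor identities that uniquely determine $\phi_\pi$. Care is needed because the two correspondences are constructed by quite different mechanisms (theta lifts from $\mathrm{GO}$-towers in \cite{GanTakeda2011} versus automorphic descent in \cite{JiangSoudry2003},\cite{JiangSoudry2004}), and this compatibility under the isogeny, while expected, needs a careful bookkeeping of central characters and similitude twists to be pinned down rigorously.
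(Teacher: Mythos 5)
Your proposal follows essentially the same route as the paper: twist by $\mu^{-1}$ (where $\omega_\pi=\mu^2$) to trivialize the central character, compare the Novodvorsky and Rankin--Selberg families of integrals, invoke Theorem~\ref{T:main} for the twisted pair, and untwist the $L$-parameter on the Galois side using \cite[Main Theorem~(iv)]{GanTakeda2011}.

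The paper organizes your step~(i) a bit more cleanly. Rather than carrying out a single change of variables that simultaneously descends from ${\rm GSp}_4$ to $\SO_5$ and absorbs the twist, it first records the elementary Novodvorsky twisting identity $L^{\rm Nov}(s,(\pi\ot\mu)\x\tau)=L^{\rm Nov}(s,\pi\x(\tau\ot\mu))$, which is immediate from the shape of the zeta integral (the $\mu\circ\lambda$ twist of $\pi$ slides over to a $\mu\circ\det$ twist of $\tau$, with $\omega\mapsto\omega\mu^2$ in the Godement section), and only then applies the already-established Proposition~\ref{P:key} to the pair $\pi_0\x(\tau\ot\mu)$ with $\pi_0$ of trivial central character. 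This cleanly separates the nontrivial $\GSp_4\leftrightarrow\SO_5$ comparison (done once in \S\ref{SS:comparison}) from the purely formal twist manipulation; your version would re-derive that comparison in disguise. Either way the conclusion is $L^{\rm Nov}(s,\pi\x\tau)=L(s,\pi_0\x(\tau\ot\mu))$, and the Galois-side computation $\phi_{\pi_0}\ot\phi_{\ol{\tau\ot\mu}}\cong\phi_\pi\ot\phi_{\b\tau}$ is exactly as you say.

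Regarding the ``main obstacle'' you flag in step~(ii): this has already been dealt with by a remark in \S\ref{SSS:zeta int}, where the paper notes that for $\pi$ with trivial central character the $L$-parameters from \cite{JiangSoudry2004}, \cite{RobertsSchmidt2007}, and \cite{GanTakeda2011} all coincide; the paper treats this as known input rather than something to re-verify. Similarly, $\phi_{\pi\ot\mu}\cong\phi_\pi\ot\phi_\mu$ is precisely what \cite[Main Theorem~(iv)]{GanTakeda2011} asserts, so no further bookkeeping is needed. One small notational point: keep $\phi_{\b\tau}$ rather than $\phi_\tau$ in your step~(ii) display, since $\tau$ is allowed to be reducible; the identity you want is $\phi_{\pi_0}\ot\phi_{\ol{\tau\ot\mu}}\cong\phi_\pi\ot\phi_{\b\tau}$, and $\ol{\tau\ot\mu}=\b\tau\ot\mu$.
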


\begin{Remark}
When $\pi$ is a theta lift from a generic representation of split ${\rm GSO}_4$, same identities was obtained by Soudry in  
\cite{Soudry1984} under a mild assumption. On the other hand, when $\tau$ is non-supercuspidal, the equality between 
$L$-factors was proved in recent results of Loeffler-Pilloni-Skinner-Zerbes, and Loeffler in \cite{LPSZ2021} and 
\cite{Loeffler}, respectively. We point out that in these results, there are no restriction on the central character.
Here we supply some complementary results under the assumption on the central character.
\end{Remark}

\subsection{Idea of the proof}
In an analogous setting, Kaplan proved, under an assumption
\footnote{This assumption can be lifted by results in a recent preprint \cite{Luo2021}}
on intertwining operators, that the $L$- and $\epsilon$-factors 
attached to $tempered$ irreducible smooth generic representations of (quasi)-split $\SO_{2n}(F)$ and $\GL_r(F)$ defined
by (LS) and (RS) are the same (\cite{Kaplan2013}). As pointed out in op. cit., the technique and results readily adapted to 
our settings, due to the similar nature and technical closeness of the constructions. In this note; however, we take a
different approach
\footnote{Actually, our original approach follows \cite{Kaplan2013}, but we can only obtain partial results.}
which takes the advantage of the accidental isomorphisms ${\rm PGSp}_4\cong\SO_5$ and 
$\bbG_m\backslash{\rm G}({\rm SL}_2\x{\rm SL}_2)\cong\SO_4$. These allow us to compare two local integrals and 
 then to establish the following key identity
 \[
 L^{\rm Nov}(s,\pi\x\tau)=L(s,\pi\x\tau).
 \] 
Once this identity is established, we can transfer the relevant results from one to another. In particular, by results in 
\cite{LPSZ2021} and \cite{Loeffler}, our task is to prove \thmref{T:main} when $\tau$ is supercuspidal. This is settled in
this note.

\subsection{Contents of the note}
We conclude this introduction with a brief description of the contents of this note. In \S\ref{S:RS int and local factor}, we will 
review the local Rankin-Selberg integrals for $\SO_{2n+1}\x\GL_r$ with $1\le r\le n$ and defined the associated local
factors. For this, we first introduce the good sections in \S\ref{SS:gd section}.  Then the integrals and the local
factors will be defined in \S\ref{SS:local factor}.  In \S\ref{SS:Laurent series}, we interpret the Rankin-Selberg integrals
as formal Laurent series following the idea of Jacquet-Shalika-Piatetski-Shapiro and Kaplan. This will be used in the proof
later. In \S\ref{SS:acc isom}, we recall the accidental isomorphisms and fit them into a commutative diagram, which is 
crucial when comparing two local integrals. Novodvorsky's local integrals and the associated local factors will be 
introduced in \S\ref{SS:zeta int}, and the comparison between Rankin-Selberg and Novodvorsky's local integrals will be 
given in \S\ref{SS:comparison}. Finally, the proof of \thmref{T:main} (when $\tau$ is supercuspidal) will appear in 
\S\ref{S:proof of main}.

\subsection{Conventions}
Let $G$ be an $\ell$-group in the sense of \cite[Section 1]{BZ1976}. 
In this note, by a representation of $G$ we mean a 
smooth representation with coefficients in $\bbC$. Let $\pi$ be a representation of $G$.  We usually denote 
by $\cV_\pi$ the underlying (abstract) $\bbC$-linear space and by $\omega_\pi$ the central character (if exists). 
When $G=\GL_r(F)$ or ${\rm GSp}_4(F)$ and $\mu$ is a character of $F^\x$, that is, a one-dimensional 
representation of $F^\x$, we often regard $\mu$ as a character of $G$ via composing with the determinant map or 
the similitude character accordingly. Then $\pi\ot\mu$ will be a representation of $G$ with $\cV_{\pi\ot\mu}=\cV_\pi$ 
and the action given by $\mu(g)\pi(g)v$ for $g\in G$ and $v\in\cV_\pi$.
If $\mu$ is a character of $F^\x$, we denote by $L(s,\mu)$, $\epsilon(s,\mu,\psi)$ and 
$\gamma(s,\mu,\psi)$ the $L$-, $\epsilon$- and $\gamma$-factor attached to $\mu$ and $\psi$ that appeared in the 
Tate thesis  (\cite[Section 3.1]{Bump1998}). 

\section{Rankin-Selberg Integrals and Local Factors}\label{S:RS int and local factor}
In this section, we review the local Rankin-Selberg integrals for $\SO_{2n+1}\x\GL_r$ with $1\le r\le n$ developed by 
Soudry in \cite{Soudry1993} (see also \cite{Kaplan2015}) and define the associated local factors. 

\subsection{Special orthogonal groups and their subgroups}\label{SS:SO}

\subsubsection{Special orthogonal groups}\label{SSS:special orthogonal group}
Define an element $\jmath_m\in\GL_m(F)$ inductively by 
\begin{equation}\label{E:J_m}
\jmath_1=1\quad\text{and}\quad \jmath_m=\pMX{0}{1}{\jmath_{m-1}}{0}.
\end{equation}
Put $S_m=\jmath_m$ if $m$ is even, and
\[
S_m
=
\begin{pmatrix}
&&\jmath_n\\
&2&\\
\jmath_n
\end{pmatrix}
\] 
if $m=2n+1$ is odd. Let $\SO_m(F)\subset{\rm SL}_m(F)$ be the special orthogonal group defined by 
\[
\SO_m(F)
=
\stt{g\in{\rm SL}_m(F)\mid {}^tgS_mg=S_m}
\]
where ${}^tg$ stands for the transpose of $g$. We may regard $\SO_m$ as an algebraic group defined over $\frak{o}$, 
the valuation ring of $F$.
%and denote $G_n=\SO_{2n+1}$ and $H_r=\SO_{2r}$ with $n,r\ge 1$.
Throughout this note, we assume $1\le r\le n$. Then there is a natural embedding 
$\varrho:\SO_{2r}(F)\hookto \SO_{2n+1}(F)$ 
given by 
\begin{equation}\label{E:embedding}
\SO_{2r}(F)\ni\pMX{a}{b}{c}{d}\longmapsto
\begin{pmatrix}
a&&b\\
&I_{2(n-r)+1}\\
c&&d
\end{pmatrix}\in \SO_{2n+1}(F)
\end{equation}
with $a,b,c,d\in{\rm Mat}_{r\x r}(F)$.

\subsubsection{Subgroups and non-degenerated characters}
Let $U_n\subset \SO_{2n+1}$, $V_r\subset\SO_{2r}$ and $Z_r\subset\GL_r$ be the maximal unipotent subgroups 
whose $F$-rational points consist upper unitriangular matrices. Define the non-degenerate characters 
$\psi_{U_n}: U_n(F)\to\bbC^\x$ and $\b{\psi}_{Z_r}:Z_r(F)\to\bbC^\x$ by 
\[
\psi_{U_n}(u)=\psi(u_{12}+u_{23}+\cdots+u_{n-1,n}+ 2^{-1}u_{n,n+1})
\]
and
\[
\b{\psi}_{Z_r}(z)
=
\b{\psi}(z_{12}+z_{23}+\cdots+z_{r-1, r})
\]
for $u=(u_{ij})\in U_n(F)$ and $z=(z_{ij})\in Z_r(F)$,  where $\b{\psi}=\psi^{-1}$.
Let $Q_r\subset \SO_{2r}$ be the Siegel parabolic subgroup with the Levi decomposition $M_r\ltimes N_r$ with
\begin{equation}\label{E:levi of Q}
M_r(F)
=
\stt{m_r(a)=\pMX{a}{}{}{a^*}\mid a\in\GL_r(F)}\cong\GL_r(F)
\end{equation}
and 
\begin{equation}\label{E:ur of Q}
N_r(F)
=
\stt{n_r(b)=\pMX{I_r}{b}{}{I_r}\mid\text{$b\in\Mat_{r\x r}(F)$ with $b=-\jmath_r {}^t b \jmath_r$}}.
\end{equation}
Here for $a\in\GL_r(F)$, we denote $a^*=\jmath_r{}^ta^{-1}\jmath_r$.

\subsection{Induced representations}\label{SS:ind rep}
We introduce normalized induced representations that enter the definitions of local Rankin-Selberg integrals. 
Let $\tau$ be a representation of $\GL_r(F)$, $s_0\in\bbC$ and denote 
$\tau_{s_0}=\tau\ot|\cdot|_F^{s_0-\frac{1}{2}}$.

\subsubsection{Induced representations $\rho_{\tau,s_0}$}
%Denote $\tau_{s_0}$ to be the representation of $\GL_r(F)$ acting on $\cV_\tau$ 
%with the action $\tau_{s_0}(a)=\tau(a)|\det(a)|_F^{s_0-\frac{1}{2}}$. 
By the decomposition $Q_r=M_r\ltimes N_r$, we can extend $\tau_{s_0}$ to a representation of $Q_r(F)$ and obtain a 
normalized induced representation 
\begin{equation}\label{E:induced rep}
\rho_{\tau, s_0}={\rm Ind}_{Q_r(F)}^{\SO_{2r}(F)}(\tau_{s_0})
\end{equation}
of $\SO_{2r}(F)$. Its underlying space $I(\tau,s_0)$ consists of smooth functions 
$\xi_{s_0}: \SO_{2r}(F)\to\cV_\tau$ satisfying 
\begin{equation}\label{E:xi_s}
\xi_{s_0}(m_r(a)nh)=\delta_{Q_r}^{\frac{1}{2}}(m_r(a))\tau_{s_0}(a)\xi_{s_0}(h)
\end{equation}
for $a\in\GL_r(F)$, $n\in N_r(F)$ and $h\in \SO_{2r}(F)$, where $\delta_{Q_r}$ is the modulus 
function of $Q_r(F)$. We remind that $\delta_{Q_r}(m_r(a))=|\det(a)|_F^{r-1}$.
%The group $H_r(F)$ acts on $I(\tau,s_0)$ by the right translation $\rho$.

\subsubsection{Induced representations $\t{\rho}_{\tau^*,1-s_0}$}
There are another induced representations that we need to introduce. First let $\t{Q}_r\subset\SO_{2r}$ be the maximal 
parabolic subgroup such that $\t{Q}_r(F)=\delta_r^{-r}Q_r(F)\delta^r_r$ where 
\footnote{The $\delta_r$ here is slightly different from that defined in \cite[Section 4.1]{YCheng}.}
\begin{equation}\label{E:delta}
\delta_r
=
\begin{pmatrix}
I_{r-1}&&\\
&\jmath_2&\\
&&I_{r-1}
\end{pmatrix}\in{\rm O}_{2r}(F)\setminus\SO_{2r}(F)
\end{equation}
(so that $\t{Q}_r=Q_r$ if $r$ is even) with ${\rm O}_{2r}(F)$ the split orthogonal group defined by $\jmath_{2r}$. 
The parabolic subgroup $\t{Q}_r$ has the Levi decomposition $\t{Q}_r=\t{M}_r\ltimes \t{N}_r$ with the Levi part 
$\t{M}_r(F)=\delta_r^{-r}M_r(F)\delta^r_r$ and the unipotent radical $\t{N}_r(F)=\delta_r^{-r}N_r(F)\delta^r_r$ 
(see \cite[Chapter 9]{Soudry1993} for more details).
Next, let us define $\tau^*$ to be the representation of $\GL_r(F)$ acting on $\cV_\tau$ with the action 
$\tau^*(a)=\tau(a^*)$ for $a\in\GL_r(F)$. Note that when $\tau$ is irreducible,  $\tau^*$ is isomorphic to the dual 
$\tau^\vee$. Again, let $s_0$ be a complex number.  Then by extending $\tau^*_{1-{s_0}}$ to the representation of 
$\t{Q}_r(F)$ on $\cV_\tau$ with the action
\[
\tau^*_{1-{s_0}}(\t{m}_r(a)\t{n})
=
\tau(a^*)|\det(a)|_F^{\frac{1}{2}-{s_0}}
\]
where $\t{m}_r(a)=\delta_r^{-r}m_r(a)\delta_r^r\in\t{M}_r(F)$ and 
$\t{n}\in\t{N}_r(F)$, we obtain another normalized induced representation 
\[
\t{\rho}_{\tau^*,1-{s_0}}
=
{\rm Ind}_{\t{Q}_r(F)}^{\SO_{2r}(F)}(\tau^*_{1-{s_0}})
\]
of $\SO_{2r}(F)$. Its underlying space $\t{I}(\tau^*,1-s_0)$ consists of the functions 
$\t{\xi}_{1-s_0}:\SO_{2r}(F)\to\cV_\tau$ satisfying the rule similar to that of \eqref{E:xi_s}.

\begin{Remark}\label{R:isom btw vs}
Observe that when $r$ is even, we have $\t{\rho}_{\tau^*,1-s_0}=\rho_{\tau^*,1-s_0}$ and 
$\t{I}(\tau^*,1-s_0)=I(\tau^*,1-s_0)$. In general, given $\t{\xi}_{1-s_0}\in \t{I}(\tau^*,1-s_0)$, we define 
$\t{\xi}^*_{1-s_0}(h)=\t{\xi}_{1-s_0}(\delta_r^{-r}h\delta_r^{r})$ for $h\in\SO_{2r}(F)$. 
Then $\t{\xi}^*_{1-s_0}\in I(\tau^*,1-s_0)$ and the map $\t{\xi}_{1-s_0}\mapsto\t{\xi}^*_{1-s_0}$ gives rise to an 
isomorphism between linear spaces $\t{I}(\tau^*,1-s_0)$ and $I(\tau^*,1-s_0)$.
\end{Remark}

\subsection{Intertwining operators and good sections}\label{SS:gd section}
As pointed out in the introduction, the definitions of the Rankin-Selberg $L$- and $\epsilon$-factors involve 
the so called $good$ $sections$. The notion of good sections was introduced by Piatetski-Shapiro and Rallis in 
\cite{PSR1987}, in order to define the local factors of triple product $L$-functions. This idea is adopted in 
\cite{Ikeda1992}, \cite{HKS1996}, \cite{Kaplan2013}, \cite{Yamana2014} and also in this note. 
In the followings, we use $s_0$ (resp. $s$) to denote a complex number (resp. variable) for the sake of clearness.

\subsubsection{Assumptions on $\tau$}\label{SSS:assump}
From now on, we always assume that $\tau$ is a subrepresentation of a representation (of $\GL_r(F)$) parabolically 
induced from an irreducible supercuspidal representation (of a Levi part), and the space 
${\rm Hom}_{Z_r(F)}(\tau,\bar{\psi}_{Z_r})=\bbC\,\Lambda_{\tau,\bar{\psi}_{Z_r}}$ is one-dimensional. 
In particular, $\tau$ admits a central character $\omega_\tau$, and all irreducible generic representations of 
$\GL_r(F)$ are included. We need these assumptions to ensure the intertwining operators that will be introduced in the 
next subsection enjoy certain properties (cf. Remark \ref{R:assump on tau}).
Observe that $\tau^*$ satisfies the same assumptions and is isomorphic to the admissible dual of $\tau$ when 
$\tau$ is irreducible. Moreover, the one-dimensional space ${\rm Hom}_{Z_r(F)}(\tau^*,\b{\psi}_{Z_r})$ is generated by 
$\Lambda_{\tau^*,\b{\psi}_{Z_r}}=\Lambda_{\tau,\b{\psi}_{Z_r}}\circ\tau(d_r)$ with
\begin{equation}\label{E:d_r}
d_r
=
\begin{pmatrix}
1&&&\\
&-1&&\\
&&\ddots\\
&&&(-1)^{r-1}
\end{pmatrix}
\in\GL_r(F).
\end{equation}

\subsubsection{Standard and holomorphic sections}\label{SSS:std and hol sec}
A function $\xi_s(h):\bbC\x\SO_{2r}(F)\to\cV_\tau$ is called a $section$ of the induced representation $\rho_{\tau,s_0}$ 
if the function $h\mapsto \xi_{s_0}(h)$ belongs to $I(\tau,s_0)$ for every $s_0\in\bbC$. 
A section $\xi_s(h)$ is called $standard$ if for every $k\in\SO_{2r}(\frak{o})$, the 
function $s\mapsto\xi_s(k)$ is a constant function. We denote by $I^{{\rm std}}(\tau,s)$ the space of standard sections.
Given a complex number $s_0$ and an element $\xi^\circ_{s_0}\in I(\tau,s_0)$, it is easy to see that 
there is a unique $\xi_s\in I^{{\rm std}}(\tau,s)$ so that $\xi_{s_0}=\xi^\circ_{s_0}$.
The space $I^{{\rm hol}}(\tau,s)$ of $holomorphic$ $sections$ is defined to be 
$I^{{\rm hol}}(\tau,s)=\bbC[q^{-s},q^s]\ot_\bbC I^{{\rm std}}(\tau,s)$.
Then by a similar argument in \cite[Claim 2.1]{Kaplan2013}, one checks that $I^{{\rm std}}(\tau,s)$ is preserved by the 
right translation of $h\in\SO_{2r}(F)$. The space of holomorphic sections 
$\t{I}^{{\rm hol}}(\tau^*,s)$ of $\t{\rho}_{\tau^*,s_0}$ can be defined in the similar way.

\subsubsection{Intertwining operators}\label{SSS:int for SO}
To define good sections, we first introduce the intertwining operator $M(\tau,s)$ and its normalization 
$M^\dagger_\psi(\tau,s)$ between the induced representations defined in \S\ref{SS:ind rep}.   Denote 
$\omega_r\in\SO_{2r}(F)$ by
\begin{equation}\label{E:omega_r}
\omega_r
=
\begin{cases}
\begin{pmatrix}&I_r\\I_r&\end{pmatrix}\quad&\text{if $r$ is even},\\
\begin{pmatrix}&I_r\\I_r&\end{pmatrix}\begin{pmatrix}&&1\\&I_{2r-2}&\\1&&\end{pmatrix}\quad&\text{if $r$ is odd}.
\end{cases}
\end{equation}
Then for $\Re(s_0)\gg 0$, the intertwining operator $M(\tau,s_0): I(\tau,s_0)\to\t{I}(\tau^*,1-s_0)$ is 
defined by the following convergent integral
\begin{equation}\label{E:int op}
M(\tau,s_0)\xi_{s_0}(h)
=
\int_{\t{N}_r(F)}\xi_{s_0}(\omega_r^{-1}\t{n}h)d\t{n}.
\end{equation}
For the choice of the Haar measure on $\t{N}_r(F)$, see \cite[Page 398]{Kaplan2015}. In particular, when $r=2$ so 
that $\t{N}_2(F)=N_2(F)\cong F$, $d\t{n}$ is simply the Haar measure on $F$ that is self-dual with respect to $\psi$. 
To define $M(\tau,s_0)\xi_{s_0}(h)$ for arbitrary $s_0$, one has to apply the meromorphic continuation. 
More precisely, given $\xi^\circ_{s_0}\in I(\tau,s_0)$, let $\xi_s\in I^{{\rm std}}(\tau,s)$ be the standard section such that 
$\xi_{s_0}=\xi^\circ_{s_0}$. We define $M(\tau,s)\xi_s(h)$ by the same integral \eqref{E:int op}. Then in addition to 
absolutely convergent for $\Re(s)\gg 0$, the integral also admits meromorphic continuation to whole complex plane. 
Now $M(\tau,s_0)\xi^\circ_{s_0}(h)$ is understood as $M(\tau,s)\xi_s(h)|_{s=s_0}$. One can define the intertwining 
operator $\t{M}(\tau^*, s_0)$ from $\t{I}(\tau^*, s_0)$ to $I(\tau, 1-s_0)$ in a similar fashion.\\

The normalized intertwining operator $M_\psi^\dagger(\tau,s)$ can be defined as follows. 
We first apply our second assumption on $\tau$ to define Shahidi's local coefficient 
\footnote{If $\tau$ is irreducible, then by \cite[Theorem 3.5]{Shahidi1990} and a recent result 
of Cogdell-Shahidi-Tsai in \cite{CogdellShahidiTsai2017}, the local coefficient $\gamma(s,\tau,\Wedge^2,\psi)$ and the 
$\gamma$-factor $\gamma(s,\phi_\tau,\Wedge^2,\psi)$ are equal up to a unit in $\bbC[q^{-s}, q^s]$.}
$\gamma(s,\tau,\Wedge^2,\psi)$ as in \cite[Page 521]{Soudry2000} (see also \cite[Pages 398-399]{Kaplan2015}).
Then $\gamma(s,\tau,\Wedge^2,\psi)$ lies in $\bbC(q^{-s})$ by our first assumption on $\tau$ together with
\cite[Proposition 2.1]{CasselmanShalika1980} and the following rationality results
%(cf. \cite[Pages 321-323]{Shahidi1981})
\begin{equation}\label{E:upper bound for int op}
P_0(q^{-2s})M(\tau,s)\xi_s\in\t{I}^{{\rm hol}}(\tau^*,1-s)
\quad\text{and}\quad 
P^*_0(q^{-2s})\t{M}(\tau^*,s)\t{\xi}_s\in I^{{\rm hol}}(\tau,1-s)
\end{equation}
for all $\xi_{s}\in I^{{\rm std}}(\tau,s)$ and $\t{\xi}_s\in\t{I}^{\rm std}(\tau^*,s)$, where $P_0(X)$ and $P^*_0(X)$ are some
non-zero polynomials. Now the normalized intertwining operator $M^\dagger_\psi(\tau,s)$ is defined to be
\begin{equation}\label{E:normalization}
M_\psi^\dagger(\tau, s)
=
\gamma(2s-1,\tau, {\Wedge}^2, \psi)M(\tau,s).
\end{equation}
One can define $\t{M}^\dagger(\tau^*,s)$ in a similar way. Then we have 
\begin{equation}\label{E:comp of int op}
M_\psi^\dagger(\tau,s)\circ \t{M}_\psi^\dagger(\tau^*,1-s)=1
\quad\text{and}\quad
\t{M}_{\psi}^\dagger(\tau^*,1-s)\circ M_{\psi}^\dagger(\tau,s)=1.
\end{equation}

\begin{Remark}\label{R:assump on tau}
There are two places that we need the assumptions on $\tau$ made in \S\ref{SSS:assump}. The first is the rationality
property of the intertwining operators in \eqref{E:upper bound for int op}. The second is the identities 
in \eqref{E:comp of int op}. When $\tau$ is irreducible, these follow from well-known results of Shahidi in 
\cite[Section 2,3]{Shahidi1981}. One of the key step to get these results is to realize $\tau$ as a 
subrepresentation of a representation that is parabolically induced from an irreducible supercuspidal representation.
When $\tau$ is irreducible, this follows from Jacquet's subrepresentation theorem. Here we make it as an assumption 
and then apply (variants of) the proofs in \cite{Shahidi1981} to obtain \eqref{E:upper bound for int op} and 
\eqref{E:comp of int op}.
\end{Remark}

\subsubsection{Good sections}
We now define good sections for $\rho_{\tau,s_0}$. By \eqref{E:upper bound for int op} and the fact that
$\gamma(s,\tau^*,\Wedge^2,\psi)\in\bbC(q^{-s})$, we have
$\t{M}_{\psi}^\dagger(\tau^*,1-s)\t{I}^{{\rm hol}}(\tau^*,1-s)\subset\bbC(q^{-s})\ot_\bbC I^{{\rm hol}}(\tau,s)$.
Thus we can define the space $I^{{\rm gd}}(\tau,s)$ of $good$ $sections$
%\footnote{Our definition is slightly different from that of \cite{Kaplan2013}.}
of $\rho_{\tau,s_0}$ to be the $\bbC[q^{-s}, q^s]$-submodule of $\bbC(q^{-s})\ot_\bbC I^{{\rm hol}}(\tau,s)$ 
generated by 
\begin{equation}\label{E:good section}
I^{{\rm hol}}(\tau,s)\cup\t{M}_{\psi}^\dagger(\tau^*,1-s)\t{I}^{{\rm hol}}(\tau^*,1-s).
\end{equation}
Similarly, we can define the space $\t{I}^{{\rm gd}}(\tau^*,1-s)$ of good sections of $\t{\rho}_{\tau^*,1-s_0}$. 
These definitions are independent of $\psi$ by \cite[(5.6)]{Kaplan2015}. Also by \eqref{E:comp of int op}, we have
\begin{equation}\label{E:close under M} 
M_{\psi}^\dagger(\tau,s)I^{{\rm gd}}(\tau,s)=\t{I}^{{\rm gd}}(\tau^*,1-s)
\quad\text{and}\quad
\t{M}^\dagger_\psi(\tau^*,1-s)\t{I}^{\rm gd}(\tau^*,1-s)=I^{\rm gd}(\tau,s).
\end{equation}
We point out that these identities are crucial in defining the Rankin-Selberg $L$- and $\epsilon$-factors attached to 
$\pi\x\tau$. Especially we need them to guarantee the (Rankin-Selberg) $\epsilon$-factors are units in $\bbC[q^{-s},q^s]$. 
For more reasons of using good sections instead of using merely holomorphic sections, we recommend the readers 
to consult \cite[pages 589-590]{Kaplan2013}.

\subsection{Integrals and local factors}\label{SS:local factor}
Let $\pi$ be an irreducible generic representation of $\SO_{2n+1}(F)$ and fix a non-zero element 
$\Lambda_{\pi,\psi_{U_n}}\in {\rm Hom}_{U_n(F)}(\pi,\psi_{U_n})$. Let $\tau$ be 
a representation of $\GL_r(F)$ as in \S\ref{SSS:assump}.
%Recall that $\SO_{2r}(F)$ can be regarded as a subgroup of $\SO_{2n+1}(F)$ via $\varrho$ (cf.\eqref{E:embedding}).

\subsubsection{Rankin-Selberg integrals}\label{SSS:RS int}
To define the local Rankin-Selberg integrals for $\pi\x\tau$, let us put
\[
\bar{X}_{n,r}(F)
=
\stt{
\begin{pmatrix}I_r&&&&\\x&I_{n-r}&&\\&&1\\&&&I_{n-r}&\\&&&x'&I_r\end{pmatrix}
\mid x\in{\rm Mat}_{(n-r)\x r}(F),\,\,x'=-\jmath_r{}^tx\jmath_{n-r}}
\subset\SO_{2n+1}(F).
\]
Then the integrals are given by 
\begin{equation}\label{E:RS integral in general}
\Psi(v\ot\xi_s)
=
\int_{V_r(F)\backslash\SO_{2r}(F)}\int_{\b{X}_{n,r}(F)}
W_v(\varrho(\b{x}h))f_{\xi_s}(h)d\b{x}dh
\end{equation}
where $W_v(g)=\Lambda_{\pi,\psi_{U_n}}(\pi(g)v)$ is the Whittaker function associated to $v\in\cV_\pi$ for 
$g\in\SO_{2n+1}(F)$ and $f_{\xi_s}(h)=\Lambda_{\tau,\bar{\psi}_{Z_r}}(\xi_s(h))$
for $\xi_s\in I^{{\rm std}}(\tau,s)$ and $h\in\SO_{2r}(F)$ (this is NOT the Whittaker function associated to $\xi_s$).
On the other hand, by using the normalized intertwining operator $M_\psi^\dagger(\tau,s)$, one obtains another 
integral $\t{\Psi}(v\ot\xi_s)$ attached to $v$ and $\xi_s$. More precisely, let
\[
\delta_{n,r}
=
\begin{pmatrix}
I_{r-1}&&\\&&&1\\&&-I_{2(n-r)+1}\\&1\\&&&&I_{r-1}
\end{pmatrix}\in\SO_{2n+1}(F).
\]
Then we define
\begin{equation}\label{E:dual RS int}
\t{\Psi}(v\ot\xi_s)
=
\int_{V_r(F)\backslash\SO_{2r}(F)}\int_{\b{X}_{n,r}(F)}
W_v(\varrho(\b{x}h)\delta^r_{n,r})f^*_{M^\dagger_\psi(\tau,s)\xi_s}(\delta_r^{-r}h\delta^r_r)d\b{x}dh
\end{equation}
where $f^*_{M^\dagger_\psi(\tau,s)\xi_s}(h)=\Lambda_{\tau^*,\b{\psi}_{Z_r}}(M^\dagger_\psi(\tau,s)\xi_s(h))$.\\

The integrals \eqref{E:RS integral in general} and \eqref{E:dual RS int}, which are originally absolute convergence
in some half-planes that depend only upon (the classes of) $\pi$ and $\tau$, have meromorphic 
continuations to whole complex plane, and give rise to rational functions in $q^{-s}$. Furthermore, 
there exist $v\in\cV_\pi$ and $\xi_s\in I^{{\rm std}}(\tau,s)$ such that $\Psi(v\ot\xi_s)=1$ according to
\cite[Section 6]{Soudry1993}.  Consequently
\footnote{By combining with the multiplicity one property of certain Hom spaces (\cite[Chapter 8]{Soudry1993}).}
there exists a non-zero rational function $\gamma(s,\pi\x\tau,\psi)\in\bbC(q^{-s})$, depending only on $\psi$ and 
(the classes of) $\pi$ and $\tau$ such that 
\begin{equation}\label{E:FE}
\t{\Psi}(v\ot\xi_s)=\gamma(s,\pi\x\tau,\psi)\Psi(v\ot\xi_s)
\end{equation}
for all $v\in\cV_\pi$ and $\xi_s\in \bbC(q^{-s})\ot_{\bbC} I^{{\rm std}}(\tau,s)$. Concerning the compatibility for the 
$\gamma$-factors defined by different methods, we have the following result.

\begin{thm}[\cite{JiangSoudry2004}, \cite{Soudry2000}, \cite{Kaplan2015}]\label{T:RS=Gal gamma}
Let $\pi$ be an irreducible generic representation of $\SO_{2n+1}(F)$ and $\tau$ be an irreducible generic representation 
of $\GL_r(F)$. Then we have $\omega_\tau(-1)^n\gamma(s,\pi\x\tau,\psi)=\gamma(s,\phi_\pi\ot\phi_\tau,\psi)$.
\end{thm}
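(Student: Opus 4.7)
\emph{Proof plan.} Since the statement is attributed to the three cited papers, my plan is to present the identity as the composition of two previously established equalities, with a brief account of how each would be obtained and of the origin of the sign $\omega_\tau(-1)^n$. Let $\gamma^{\mathrm{LS}}(s,\pi\x\tau,\psi)$ denote the Langlands--Shahidi $\gamma$-factor attached to $\pi\x\tau$ and $\psi$. The plan is to show
\[
\omega_\tau(-1)^n\gamma(s,\pi\x\tau,\psi) \;=\; \gamma^{\mathrm{LS}}(s,\pi\x\tau,\psi) \;=\; \gamma(s,\phi_\pi\ot\phi_\tau,\psi),
\]
the first equality being the (RS)/(LS) compatibility and the second being the (LS)/(WD) compatibility.

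For the right-hand equality I would invoke Jiang--Soudry \cite{JiangSoudry2004}. Their argument globalizes $\pi$ and $\tau$ to cuspidal automorphic representations whose local components at all remaining places are either unramified or supercuspidal of a well-understood type, and then applies the global functional equation to compare $\gamma^{\mathrm{LS}}$ with the $\gamma$-factor on the $\GL$-side obtained from the Langlands functorial transfer of $\pi$ to $\GL_{2n}$. The compatibility of the Langlands--Shahidi and Weil--Deligne $\gamma$-factors is already known for $\GL_{2n}\x\GL_r$ via the local Langlands correspondence for general linear groups, and this feeds back into the desired identity for $\SO_{2n+1}\x\GL_r$.

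For the left-hand equality I would follow Soudry \cite{Soudry2000} and, in full generality, Kaplan \cite{Kaplan2015}. The strategy is to characterize $\gamma^{\mathrm{LS}}(s,\pi\x\tau,\psi)$ by its behaviour under parabolic induction (``multiplicativity''), its transformation law under twisting $\psi$, and its value on a tempered unramified datum, and then to verify these same properties for $\omega_\tau(-1)^n\gamma(s,\pi\x\tau,\psi)$. Multiplicativity with respect to the $\GL_r$-slot follows from manipulating the induced section in the Rankin--Selberg integral, while multiplicativity with respect to the $\SO_{2n+1}$-slot is substantially harder, and is what I would expect to be the main obstacle: it requires a careful analysis of the Rankin--Selberg integrals when $\pi$ is realized as a parabolic induction, using partial Bessel functions and asymptotic expansions of Whittaker functions. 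The sign $\omega_\tau(-1)^n$ surfaces through the Weyl element used to reconcile the embedding $\varrho:\SO_{2r}\hookto\SO_{2n+1}$ with the Levi embedding implicit in the Langlands--Shahidi setup, and through the specific normalization of $\psi_{U_n}$ fixed in \S\ref{SSS:special orthogonal group}.

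Concatenating the two equalities yields the theorem; there is no additional input, so once both compatibility results are in hand the proof is a two-line citation. Accordingly, the only genuine obstacle in a from-scratch approach would be reproducing Kaplan's (RS)/(LS) comparison, in particular the multiplicativity on the orthogonal side and the ensuing computation of the sign constant.
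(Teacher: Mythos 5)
Your proposal matches the paper's approach exactly: the theorem is imported from the literature without proof, and the paper's introduction describes the very same two-step decomposition you give, namely the (WD)/(LS) comparison from Jiang--Soudry concatenated with the (LS)/(RS) comparison from Soudry and Kaplan, with the sign $\omega_\tau(-1)^n$ absorbing the normalization discrepancy between the two $\gamma$-factors. Your elaboration of the inner mechanics of the cited results is a reasonable sketch, but since the paper relies on citation alone, your top-level plan is both correct and complete.
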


The following lemma, which is standard, is needed in formulating the Rankin-Selberg $L$- and $\epsilon$-factors

\begin{lm}\label{L:upper bound of RS integral}
Let $v\in\cV_\pi$ and $\xi_s\in I^{{\rm gd}}(\tau,s)$. 
Then we have $\Psi(v\ot\xi_s)\in\bbC(q^{-s})$ as a meromorphic function. 
Moreover, there exists a non-zero polynomial $P(X)\in\bbC[X]$ depending at most upon $\psi$ and (the classes of)
$\pi$ and $\tau$ such that $P(q^{-s})\Psi(v\ot\xi_s)\in\bbC[q^{-s},q^s]$.
\end{lm}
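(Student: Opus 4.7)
The plan is to exploit the description of $I^{\rm gd}(\tau,s)$ in \eqref{E:good section} as the $\bbC[q^{-s},q^s]$-module generated by $I^{\rm hol}(\tau,s)\cup\t{M}^\dagger_\psi(\tau^*,1-s)\t{I}^{\rm hol}(\tau^*,1-s)$, and to treat the two types of generators separately. Since both conclusions of the lemma are preserved under multiplication by elements of $\bbC[q^{-s},q^s]$ (any extra factor being absorbed into $P(X)$), it suffices to verify the statement in the two cases: (a) $\xi_s\in I^{\rm hol}(\tau,s)$; (b) $\xi_s=\t{M}^\dagger_\psi(\tau^*,1-s)\t{\xi}_{1-s}$ with $\t{\xi}_{1-s}\in\t{I}^{\rm hol}(\tau^*,1-s)$.

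In case (a), writing a holomorphic section as a finite $\bbC[q^{-s},q^s]$-linear combination of standard ones reduces the problem further to $\xi_s\in I^{\rm std}(\tau,s)$. For standard sections, the rationality of $\Psi(v\ot\xi_s)$ in $q^{-s}$, together with the existence of a non-zero polynomial $P_1(X)\in\bbC[X]$ depending only on $\psi$ and the classes of $\pi$ and $\tau$ such that $P_1(q^{-s})\Psi(v\ot\xi_s)\in\bbC[q^{-s},q^s]$, is established in Soudry's work \cite[Chapters 4, 6]{Soudry1993} via Bernstein's continuation principle applied to the uniformly bounded family of Whittaker-type bilinear forms (see also the parallel treatment for $\SO_{2n}\x\GL_r$ in \cite{Kaplan2013}); invoking these results settles case (a).

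In case (b), the composition identity \eqref{E:comp of int op} gives $M^\dagger_\psi(\tau,s)\xi_s=\t{\xi}_{1-s}$, so the integrand of \eqref{E:dual RS int} involves the holomorphic section $\t{\xi}_{1-s}$ of $\t{\rho}_{\tau^*,1-s_0}$ rather than the good section $\xi_s$. Thus $\t{\Psi}(v\ot\xi_s)$ is a dual Rankin-Selberg integral applied to a holomorphic section, and the dual-side analog of case (a) — available because $\tau^*$ satisfies the same assumptions as $\tau$ (see \S\ref{SSS:assump}) — yields $\t{\Psi}(v\ot\xi_s)\in\bbC(q^{-s})$ together with a polynomial $P_2(X)\in\bbC[X]$ depending only on $\psi,\pi,\tau$ such that $P_2(q^{-s})\t{\Psi}(v\ot\xi_s)\in\bbC[q^{-s},q^s]$. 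The functional equation \eqref{E:FE} now reads $A(q^{-s})\Psi(v\ot\xi_s)=B(q^{-s})\t{\Psi}(v\ot\xi_s)$, where $A,B\in\bbC[X]$ are chosen so that $\gamma(s,\pi\x\tau,\psi)=A(q^{-s})/B(q^{-s})$; both depend only on $\psi,\pi,\tau$. This forces $\Psi(v\ot\xi_s)\in\bbC(q^{-s})$, and multiplying through by $P_2(q^{-s})$ shows that $P(X):=A(X)P_2(X)$ serves as the desired bounding polynomial.

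The main technical obstacle is really case (a): the \emph{uniformity} of the bounding polynomial $P_1$ across $v$ and $\xi_s$ is the heart of Soudry's continuation argument, whereas the extension from holomorphic to good sections via the functional equation is purely formal bookkeeping.
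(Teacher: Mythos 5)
Your proof is correct, and case (a) is identical to what the paper does. The difference is in case (b), where you take a detour through the functional equation while the paper's argument is more direct. The paper observes that \eqref{E:upper bound for int op}, together with the rationality of $\gamma(s,\tau^*,\Wedge^2,\psi)$, already shows there is a non-zero $P_2(X)\in\bbC[X]$ with $P_2(q^{-s})\xi_s\in I^{\rm hol}(\tau,s)$ whenever $\xi_s=\t{M}_\psi^\dagger(\tau^*,1-s)\t{\xi}_{1-s}$; case (b) then collapses immediately to case (a) by writing $P_2(q^{-s})\Psi(v\ot\xi_s)=\Psi(v\ot P_2(q^{-s})\xi_s)$ and setting $P=P_1P_2$. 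Your version instead unwinds the definition of $\t{\Psi}$, notes via \eqref{E:comp of int op} that $M_\psi^\dagger(\tau,s)\xi_s=\t{\xi}_{1-s}$ is holomorphic, invokes the dual-side analogue of Soudry's rationality estimate for $\t{\Psi}$, and finally passes back to $\Psi$ through the functional equation \eqref{E:FE}. Both give the required conclusion with a uniform $P$; the paper's route is shorter and needs only the rationality statement \eqref{E:upper bound for int op} about the intertwining operator itself, whereas yours additionally requires the existence and rationality of the $\gamma$-factor and a Soudry-type bound for the integral $\t{\Psi}$ (which is indeed in the same references, but is an extra input). Also note that $\t{\Psi}$ is naturally a Rankin--Selberg integral in the variable $1-s$ with $\tau$ replaced by $\tau^*$, so the polynomial you call $P_2$ is really $P'(q^{s-1})$ for some $P'\in\bbC[X]$; one should observe that this is $q^{-ds}$ times a polynomial in $q^{-s}$ to conclude that the desired $P_2(X)\in\bbC[X]$ exists. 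This is a harmless bookkeeping point, but worth making explicit.
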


\begin{proof}
If $\xi_s\in I^{{\rm std}}(\tau,s)$, then the assertions follows from results in \cite[Section 4.3]{Soudry1993}, namely, 
$\Psi(v\ot\xi_s)$ gives an element in $\bbC(q^{-s})$ and there exists $P_1(X)\in \bbC[X]$ such that 
$P_1(q^{-s})\Psi(v\ot\xi_s)$ is contained in $\bbC[q^{-s}, q^s]$. 
Since $I^{{\rm hol}}(\tau,s)=\bbC[q^{-s}, q^s]\ot_\bbC I^{\rm std}(\tau,s)$, same assertions hold when 
$\xi_s\in I^{{\rm hol}}(\tau,s)$. To complete the proof, we may assume $\xi_s=M^\dagger_\psi(\tau^*,1-s)\t{\xi}_{1-s}$ for 
some $\t{\xi}_{1-s}\in\t{I}^{{\rm std}}(\tau^*,1-s)$ by \eqref{E:good section}. Then by \eqref{E:upper bound for int op} and  
the fact that $\gamma(s,\tau^*,\Wedge^2,\psi)$ is a non-zero element in $\bbC(q^{-s})$, there exists a non-zero 
$P_2(X)\in\bbC[X]$ so that 
\[
P_2(q^{-s})\xi_s=P_2(q^{-s})M^\dagger(\tau^*,1-s)\t{\xi}_{\tau^*,1-s}\in I^{{\rm hol}}(\tau,s).
\] 
Now the proof follows at once if we set $P(X)=P_1(X)P_2(X)$.
\end{proof}

\subsubsection{Rankin-Selberg local factors}\label{SS:RS local factor}
We are ready to define the (local) Rankin-Selberg $L$- and $\epsilon$-factors attached to $\pi\x\tau$.
Let $I_{\pi\x\tau}(s)\subset\bbC(q^{-s})$ be the $\bbC[q^{-s}, q^s]$-submodule spanned by the integrals 
$\Psi(v\ot\xi_s)$ for $v\in\cV_\pi$ and $\xi_s\in I^{{\rm gd}}(\tau,s)$.  The submodule $I_{\pi\x\tau}(s)$ is independent
of $\psi$ by \cite[Proposition 10.2]{Soudry1993} (see also \cite[Section 5.1]{Kaplan2015}). Moreover,
\lmref{L:upper bound of RS integral} implies that $I_{\pi\x\tau}(s)$ is in fact a fractional ideal. Now since 
$1\in\ I_{\pi\x\tau}(s)$ by \cite[Chapter 6]{Soudry1993}, the ideal $I_{\pi\x\tau}(s)$ admits a generator 
$P_{\pi\x\tau}(q^{-s})^{-1}$ for some $P_{\pi\x\tau}(X)\in\bbC[X]$ with $P_{\pi\x\tau}(0)=1$. We then define
\[
L(s,\pi\x\tau)=P_{\pi\x\tau}(q^{-s})^{-1}
\quad\text{and}\quad
\epsilon(s,\pi\x\tau,\psi)=\omega_\tau(-1)^n\gamma(s,\pi\x\tau,\psi)\frac{L(s,\pi\x\tau)}{L(1-s,\pi\x\tau^*)}.
\]
Then $I_{\pi\x\tau}(s)=L(s,\pi\x\tau)\bbC[q^{-s}, q^s]$ according to the definition. On the other hand, a standard argument 
implies that $\epsilon(s,\pi\x\tau,\psi)$ is a unit in $\bbC[q^{-s}, q^s]$ (\cite[Claim 3.2]{Kaplan2013}).

\begin{Remark}
The definition of $L(s,\pi\x\tau)$ seems to depend on the integrals when $r$ is odd, that is, one can define the 
$L$-factor by using either $\Psi(v\ot\xi_s)$ or $\t{\Psi}(v\ot\xi_s)$, while the induced representations involved are different.
However, it does not since in \eqref{E:dual RS int}, the section is twisted by $\delta_r$, and the isomorphism in 
Remark \eqref{R:isom btw vs} extends to an isomorphism between the spaces of relevant standard sections due to
the fact $\delta_r^{-1}\SO_{2r}(\frak{o})\delta_r=\SO_{2r}(\frak{o})$.
\end{Remark}

\subsection{Integrals and formal Laurent series}\label{SS:Laurent series}
In this subsection, we associate to each integral $\Psi(v\ot\xi_s)$ (with $v\in\cV_\pi$ and $\xi_s\in I^{{\rm hol}}(\tau,s)$)
a $formal$ $Laurent$ $series$ $\Psi_{v\ot\xi_s}(X)$ such that $\Psi_{v\ot\xi_s}(q^{-{s_0}})=\Psi(v\ot\xi_{s_0})$ for 
all $s_0$ with $\Re(s_0)\gg 0$. These formal Laurent series will be used in our proofs. 
Let $\bbC\llbracket X, X^{-1}\rrbracket$ be the space of the formal sums $f(X)=\sum_{m\in\bbZ}c_mX^m$ 
with $c_m\in\bbC$. Following \cite{JPSS1983}, elements in $\bbC\llbracket X, X^{-1}\rrbracket$ are called formal 
Laurent series. Note that $\bbC\llbracket X, X^{-1}\rrbracket$ can be regard (in a natural way) as a 
$\bbC[X,X^{-1}]$-module with torsion.
The idea of interpreting integrals as formal Laurent series first appeared in \cite{JPSS1983}, which allows them to 
prove the multiplicativity of their $\gamma$-factors, and as a byproduct, the upper bounds of poles of their integrals. 
In \cite[section 8]{Kaplan2013}, Kaplan adopted the same idea to bound the poles for his integrals. He also provided
some background definitions and results which explicate the connection between integrals and series in 
\cite[Section 7]{Kaplan2013}. In this note, however, we don't need such a generality.\\

Let $v\in\cV_\pi$ and $\xi_s\in I^{\rm hol}(\tau,s)$. Then the integral $\Psi(v\ot\xi_s)$ can be formally written as 
\begin{equation}\label{E:exp RS int}
\Psi(v\ot\xi_s)
=
\sum_{m\in\bbZ}q^{-ms}\Psi^m(v\ot\xi_s)
\end{equation}
with
\begin{equation}\label{E:coeff of FLS}
\Psi^m(v\ot\xi_s)
=
\int_{Z_r(F)\backslash\GL^m_r(F)}\int_{\SO_{2r}(\frak{o})}\int_{\b{X}_{n,r}(F)}
W_v(\b{x}m_r(a)k)W'_{\xi_s(k)}(a)|\det(a)|_F^{-\frac{r}{2}}d\b{x}dkd^{\x}a
\end{equation}
by the Iwasawa decomposition $\SO_{2r}(F)=Q_r(F)\SO_{2r}(\frak{o})$ and the identification
$V_r(F)\backslash Q_r(F)\cong Z_r(F)\backslash\GL_r(F)$. Here we denote
%\begin{equation}\label{E:Whittaker for GL}
\[
W'_{\xi_s(k)}(a)
=
\Lambda_{\tau,\b{\psi}_{Z_r}}(\tau(a)\xi_s(k))
\]
%\end{equation}
and
\[
\GL^m_r(F)
=
\stt{a\in\GL_r(F)\mid |\det(a)|_F=q^{-m}}
\]
for $m\in\bbZ$.
Suppose that $\xi_s$ is a standard section. Then the integrals $\Psi^m(v\ot\xi_s)$ are actually independent of $s$. 
We show that they converge absolutely
\footnote{For this only, one can also apply the Funibi's theorem.}
and vanishing for all $m\ll 0$. Indeed, it follows from the proof of 
\cite[Proposition 4.2]{Soudry1993} that the integral $\Psi^m(v\ot\xi_s)$ is bounded by a finite sum of integrals of the form
%\begin{equation}\label{E:estimate RS int}
\[
\int f(\a_1,\a_1,\cdots, \a_r)\chi_1(\a_1)\chi_2(\a_2)\cdots\chi_r(\a_r)|\a_1\a_2^2\cdots\a_r^r|_F^{\Re(s_0)}
d^\x\a_1 d^\x\a_2\cdots d^\x\a_r
\]
%\end{equation}
with the domain of integration 
\[
\stt{(\a_1,\a_2,\ldots, \a_r)\in F^{\x,r}\mid  |\a_1\a_2^2\cdots\a_r^r|_F=q^{-m}}.
\]
Here $f\ge 0$ is a Bruhat-Schwartz function on $F^r$ and $\chi_1,\chi_2,\ldots,\chi_r\in\frak{X}_{\pi,\tau}$ where 
$\frak{X}_{\pi,\tau}$ is a finite set of positive characters of $F^\x$ that depends only on (the classes of) $\pi$ and $\tau$. 
The assertions then follow immediately.\\

Now we define the associated formal Laurent series by 
\[
\Psi_{v\ot\xi_s}(X)
=
\sum_{m\in\bbZ}X^m\Psi^m(v\ot\xi_s)
\]
for $v\in\cV_\pi$ and $\xi_s\in I^{\rm std}(\tau,s)$. In general, if $\xi_s$ is a holomorphic section, then 
$\xi_s=\sum_j P_j(q^{-s})\xi^j_s$ is a finite sum of standard sections $\xi_s^j$ with coefficients 
$P_j(q^{-s})\in\bbC[q^{-s},q^s]$. We then define 
\[
\Psi_{v\ot\xi_s}(X)
=
\sum_j P_j(X)\Psi_{v\ot\xi^j_s}(X).
\]
Evidently, these formal Laurent series have only finitely many negative terms and 
$\Psi(v\ot\xi_s)(q^{-s_0})=\Psi(v\ot\xi_{s_0})$ for all $s_0$ in some right half-plane plane.

\section{Novodvorsky v.s. Soudry}\label{S:N vs S}
Let $\pi$ be an irreducible generic representation of $\SO_5(F)$ and $\tau$ be a generic representation of 
$\GL_2(F)$ that is irreducible or induced of Langlands' type (cf. \S\ref{SSS:induced of Langlands' type}). 
In particular, $\tau$ satisfies the assumptions in \S\ref{SSS:assump}. On the other hand, via the accidental isomorphism 
between ${\rm PGSp}_4$ and $\SO_5$, $\pi$ can also be regarded as an irreducible smooth generic 
representation of ${\rm GSp}_4(F)$ with trivial central character. In \cite{Novodvorsky1979}, Novodvorsky suggested 
a constriction of $L$- and $\epsilon$-functions attached to generic automorphic representations of 
${\rm GSp}_4\x\GL_2$ over global fields, whose corresponding local theory was later studied by Soudry in 
\cite{Soudry1984}. To $v\in\cV_\pi$, $u\in\cV_\tau$ and a Bruhat-Schwartz function $\phi$ on $F^2$, one can attach 
the Novodvorsky's local integral $Z^{\rm Nov}(s,v\ot u,\phi)$. Then as indicated in 
\cite[Section 0]{Soudry1993}, the Rankin-Selberg integrals introduced in \S\ref{SSS:RS int} essentially equal to  
Novodvorsky's integrals when $n=r=2$. The goal of this section is to explicate their relation, and as a consequence, 
to show that the $L$-factors defined by these two integrals coincide.

\subsection{Accidental isomorphisms}\label{SS:acc isom}
In this subsection only, let $\bbF$ be an arbitrary field with the characteristic different from $2$. We describe the 
accidental isomorphisms $\b{\vartheta}:{\rm PGSp}_4\cong\SO_5$ and 
$\b{\vartheta}':\bbG_m\backslash{\rm G}({\rm SL}_2\x{\rm SL}_2)\cong\SO_4$, and then fit them and the embedding 
$\varrho$ in \eqref{E:embedding} (with $n=r=2$) into a commutative diagram. 

\subsubsection{The accidental isomorphism $\vartheta$}%\label{SSS:vartheta}
Let $(W,\langle,\rangle)$ be the $4$-dimensional symplectic space over $\bbF$ and 
\[
{\rm GSp}(W)
=
\stt{g\in\GL(W)\mid \langle gw, gw'\rangle=\nu(g)\langle w, w'\rangle\,\,\text{for all $w,w'\in W$}}
\]
be the similitude symplectic group with $\nu:{\rm GSp}(W)\to\bbF^\x$ the similitude character. 
We fix an ordered basis $\stt{w_1, w_2, w^*_2, w^*_1}$ of $W$ so that the associated Gram matrix is given by
\[
J=\pMX{}{\jmath_2}{-\jmath_2}{}.
\]
Let $(\t{V}, (,))$ be the $6$-dimensional quadratic space over $\bbF$ with $\t{V}=\bigwedge^2 W$ and the symmetric 
bilinear form $(,)$ defined by 
\[
v_1\wedge v_2=(v_1,v_2)(w_1\wedge w_2\wedge w^*_1\wedge w^*_2)
\]
for $v_1,v_2\in\t{V}$. Let $\t{v}=w_1\wedge w^*_1+w_2\wedge w^*_2\in\t{V}$ and put 
$V=\stt{v\in\t{V}\mid (v,\t{v})=0}$. Denote also by $(,)$ the restriction of the symmetric bilinear form to $V$, so that 
$(V,(,))$ becomes a $5$-dimensional quadratic space over $\bbF$. Let $\t{\vartheta}:{\rm GSp}(W)\to\SO(\t{V})$ be the 
homomorphism defined by $\t{\vartheta}(g)=\nu(g)^{-1}\Wedge^2 g$ for $g\in {\rm GSp}(W)$. 
Then because $\t{\vartheta}(h)\t{v}=\t{v}$, the homomorphism induces an exact sequence 
\[
1\longto\bbF^\x\overset{\iota}{\longto}{\rm GSp}(W)\overset{\vartheta}{\longto}\SO(V)\longto 1
\]
where $\iota(a)=aI_{W}$ with $I_W:W\to W$ the identity map. Now $\b{\vartheta}$ is the one induced from this exact 
sequence. 

\subsubsection{The accidental isomorphism $\vartheta'$}\label{SSS:vartheta'}
Let $(V',(,)')$ be the $4$-dimensional quadratic space over $\bbF$ with $V'={\rm M}_2(\bbF)$ the space of two-by-two 
matrices with entries in $\bbF$, and the symmetric bilinear form $(,)'$ given by
\[
(v'_1,v'_2)
=
{\rm det}(v_1'+v_2')-{\rm det}(v_1')-{\rm det}(v_2')
\]
for $v_1', v_2'\in V'$. Here ${\rm det}$ stands for the determinant map from $V'$ onto $\bbF$.  Let 
\[
{\rm GSO}(V')
=
\stt{h\in\GL(V')\mid (hv,hv')'=\nu'(h)(v,v')'\,\,\text{for all $v,v'\in V'$}}
\]
be the similitude special orthogonal group with $\nu':{\rm GSO}(V')\to\bbF^\x$ the similitude character. Note that 
$\SO(V')=\ker(\nu')$. Define a surjective homomorphism $\t{\vartheta}':\GL_2(\bbF)\x\GL_2(\bbF)\longto{\rm GSO}(V')$ 
by $\t{\vartheta}'(a_1,a_1)v'=a_1v'a_2^{-1}$ for $a_1, a_2\in\GL_2(\bbF)$ and $v'\in V'$. Then we have an exact sequence
\[
1\longto\bbF^\x\overset{\iota'}{\longto}\GL_2(\bbF)\x\GL_2(\bbF)\overset{\t{\vartheta}'}{\longto}{\rm GSO}(V')\longto 1
\]
with $\iota'(\a)=(\a I_2, \a I_2)$ for $\a\in\bbF^\x$. Since 
$\nu'(\t{\vartheta}'(a_1,a_2))={\rm det}(a_1){\rm det}(a_2)^{-1}$, the restriction gives rise to an exact sequence
\[
1\longto\bbF^\x\overset{\iota'}{\longto}{\rm G}({\rm SL}_2\x{\rm SL}_2)(\bbF)
\overset{\vartheta'}{\longto}{\rm SO}(V')\longto 1
\]
where
\[
{\rm G}({\rm SL}_2\x{\rm SL}_2)(\bbF)
=
\stt{(a_1,a_2)\in\GL_2(\bbF)\x\GL_2(\bbF)\mid {\rm det}(a_1)={\rm det}(a_2)}.
\]
Now $\b{\vartheta}'$ is the one induced from this exact sequence. 

\subsubsection{A commutative diagram}\label{SSS:comm diag}
We fit $\vartheta$, $\vartheta'$ and $\varrho$ into a commutative diagram. For this, let 
\[
e_1=w_1\wedge w_2,\,\, e_2=w_1\wedge w^*_2,\,\ v_0=w_1\wedge w^*_1-w_2\wedge w_2^*,\,\,f_2=w_2\wedge w_1^*,
\,\,f_1=w_1^*\wedge w_2^*
\]
be an ordered basis of $V$ so that its Gram matrix is the matrix $S_2$ in \S\ref{SSS:special orthogonal group}.
On the other hand, let
\[
e_1'
=
\pMX{0}{1}{0}{0};
\quad
e_2'
=
\pMX{-1}{0}{0}{0};
\quad
f_2'
=
\pMX{0}{0}{0}{-1};
\quad
f_1'
=
\pMX{0}{0}{-1}{0}
\]
be an ordered basis of $V'$. Then the associated Gram matrix is $\jmath_4$. Using these bases, 
we can identify $\SO(V)$ with $\SO_5(\bbF)$ and ${\rm GSp}_4(W)$ with ${\rm GSp}_4(\bbF)$. Next, let 
$\varrho':{\rm G}({\rm SL}_2\x{\rm SL}_2)(\bbF)\longto{\rm GSp}_4(\bbF)$ be the embedding given by 
\begin{equation}\label{E:rho'}
\left(\pMX{\a}{\beta}{\gamma}{\delta}, \pMX{\a'}{\beta'}{\gamma'}{\delta'}\right)
\mapsto
\begin{pmatrix}
\a&&&\beta\\
&\a'&\beta'\\
&\gamma'&\delta'\\
\gamma&&&\delta
\end{pmatrix}.
\end{equation}
We then have the following commutative diagram
\begin{equation}\label{E:comm diag}
\begin{tikzcd}
&1\arrow{r}&\bbF^\x\arrow{d}{\parallel}\arrow{r}{\iota'}&{\rm G}({\rm SL}_2\x{\rm SL}_2)(\bbF)\arrow{d}{\varrho'}
\arrow{r}{\vartheta'}&\SO_4(\bbF)\arrow{d}{\varrho}\arrow{r}&1\\
&1\arrow{r}&\bbF^\x\arrow{r}{\iota}&{\rm GSp}_4(\bbF)\arrow{r}{\vartheta}&\SO_5(\bbF)\arrow{r}&1.
\end{tikzcd}
\end{equation}
This diagram will be used later when we compare Rankin-Selberg integrals with Novodvorsky' zeta integrals.

\subsection{Novodvorsky's zeta integrals and local factors}\label{SS:zeta int}
We introduce Novodvorsky's zeta integrals for generic representations of ${\rm GSp}_4\x\GL_2$ and then 
define the associated local factors in this subsection. Let $\pi$ be an irreducible generic representation of 
${\rm GSp}_4(F)$ (see \cite[Page 34]{RobertsSchmidt2007}) and $\tau$ be a generic representation of 
$\GL_2(F)$ that is irreducible or induced of Langlands' type and put $\omega=\omega_\pi\omega_\tau$. 
Given $v\in\cV_\pi$ (resp. $u\in\cV_\tau$), we let $W_v$ (resp. $W'_u$) be the associated Whittaker function with 
respect to $\psi$ (resp. $\b{\psi}$).\\

For convenience, let us introduce the following notations
\begin{equation}\label{E:matrix}
t(\a,\beta)=\pMX{\a}{}{}{\beta};
\quad
z(y)=\pMX{1}{y}{}{1};
\quad
w=\pMX{}{1}{1}{}
\end{equation}
for $\a,\beta\in F^\x$ and $y\in F$.

\subsubsection{Induced representations for $\GL_2$}
Let $B_2\subset\GL_2$ be the standard upper triangular Borel subgroup. For a pair $\ul{\chi}=(\chi_1,\chi_2)$ of 
characters of $F^\x$. Then $\ul{\chi}$ induces a character on $B_2(F)$ by 
$\ul{\chi}(t(\a,\beta)z(y))=\chi_1(\a)\chi_2(\delta)$. Let 
\[
\tau_{\ul{\chi}}={\rm Ind}_{B_2(F)}^{\GL_2(F)}(\ul{\chi})
\]
to a the normalized induced representation of $\GL_2(F)$. Its underlying space, which denote by $\cB(\ul{\chi})$, 
consists of smooth functions $f:\GL_2(F)\to\bbC$ satisfying $f(bg)=\ul{\chi}(b)\delta^{\frac{1}{2}}_{B_2}(b)f(g)$ for 
$b\in B_2(F)$ and $g\in \GL_2(F)$, where $\delta_{B_2}$ is the modulus function of $B_2$. More generally, for a complex
number $s_0$, we denote $\ul{\chi}_{s_0}=(\chi_1|\cdot|_F^{s_0-\frac{1}{2}}, \chi_2|\cdot|_F^{\frac{1}{2}-s_0})$.
Then we have the induced representation $\tau_{\ul{\chi}_{s_0}}$ with the underlying space 
$\cB(\ul{\chi},s_0)=\cB(\ul{\chi}_{s_0})$. The spaces $\cB^{\rm std}(\ul{\chi},s)$ and $\cB^{\rm hol}(\ul{\chi},s)$ of 
standard and holomorphic sections of $\tau_{\ul{\chi}_{s_0}}$ can be defined in similar ways as in 
\S\ref{SSS:std and hol sec}.\\

Let $\ul{\chi}^\iota=(\chi_2,\chi_1)$. Then $M(\ul{\chi},s_0):\cB(\ul{\chi},s_0)\longto\cB(\ul{\chi}^\iota,1-s_0)$  the 
intertwining map can be defined as follows. For $s_0$ with $\Re(s_0)\gg 0$ (depending only on $\ul{\chi}$), it's given by 
the convergent integral
\begin{equation}\label{E:int op GL}
M(\ul{\chi},s_0)f_{s_0}(a)
=
\int_F f_{s_0}(wz(y)a)dy.
\end{equation}
Here $dy$ is the Haar measure on $F$ that is self-dual with respect to $\psi$. For an arbitrary $s_0$, 
$M(\ul{\chi},s_0)f_{s_0}$ can be defined as in \S\ref{SSS:int for SO}.

\subsubsection{Godement sections}
Let $\cS(F^2)$ be the space of Bruhat-Schwartz functions on $F^2$. For $\phi\in\cS(F^2)$, we define the $Godement$
$section$
\[
f^{\phi}_s(a;\ul{\chi})
=
\chi_1({\rm det}(a))|{\rm det}(a)|^s\int_{F^\x}\phi((0,\a)a)\chi_1\chi_2^{-1}|\a|_F^{2s}d^\x\a
\]
for $a\in\GL_2(F)$. Here $d^\x\a$ is any Haar measure on $F^\x$. Let us review some of its properties.
First note that the integral is essentially a local Tate integral associated to the character $\chi_1\chi_2^{-1}$ 
(for every fixed $a$). Consequently, this integral converges absolutely in some right half-plane and admits a meromorphic 
continuation to whole complex plane. Moreover, as a meromorphic function, it becomes an element in $\bbC(q^{-s})$, and 
\[
L(2s,\chi_1\chi_2^{-1})^{-1}f^{\phi}_s(a;\ul{\chi})\in\bbC[q^{-s},q^s]
\]
for every $a\in\GL_2(F)$ and $\phi\in\cS(F^2)$. By changing variables, one sees immediately that (as a function
on $\GL_2(F)$)
\[
f^\phi_{s_0}(-;\ul{\chi})\in\cB(\ul{\chi},s_0)
\]
for every $s_0$ at which it is defined. Furthermore, if we define the (symplectic) Fourier transform $\h{\phi}$ of $\phi$ by 
\[
\h{\phi}(x,y)=\int_F\int_F \phi(x',y')\psi(x'y-xy')dx'dy'
\]
with $dx'$, $dy'$ the Haar measures on $F$ that is self-dual with respect to $\psi$, then as meromorphic functions
\begin{equation}\label{E:local coeff for GL_2}
\chi_1(-1)\gamma(2s-1,\chi_1\chi_2^{-1},\psi)M(\ul{\chi},s)f^\phi_s(a;\ul{\chi})=f^{\h{\phi}}_{1-s}(a;\ul{\chi}^\iota)
\end{equation}
by \cite[Section 4.B]{GelbartJacquet1979}. Finally, we have the following lemma, whose proof is borrowed from that of
\cite[Proposition 3.2]{JLbook}.

\begin{lm}\label{L:god sec}
For every $f_s\in\cB^{\rm std}(\ul{\chi},s)$, there exists $\phi\in\cS(F^2)$ such that $f^\phi_s(-,\ul{\chi})=f_s$. 
On the other hand, we have $L(2s,\chi_1\chi^{-1}_2)^{-1}f^\phi_s(-;\ul{\chi})\in\cB^{\rm hol}(\ul{\chi},s)$ 
for every $\phi\in\cS(F^2)$.
\end{lm}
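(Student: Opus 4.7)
The plan is to treat the two assertions separately, the second by a direct application of Tate's thesis and the first by a carefully restricted choice of test function.

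For the second assertion, I would rewrite the Godement section pointwise as
\[
f^\phi_s(a;\ul{\chi}) = \chi_1(\det(a))|\det(a)|_F^s\, Z(\phi_a,\chi_1\chi_2^{-1},2s),
\]
where $\phi_a(\alpha):=\phi((0,\alpha)a)$ is a Bruhat-Schwartz function on $F$ and $Z$ is the usual Tate local zeta integral. Tate's thesis yields $L(2s,\chi_1\chi_2^{-1})^{-1}Z(\phi_a,\chi_1\chi_2^{-1},2s)\in\bbC[q^{-s},q^s]$, of degree controlled only by the support of $\phi_a$. Compactness of $\mathrm{supp}(\phi)$ makes these degrees uniformly bounded as $k$ ranges over $K:=\GL_2(\frak{o})$, and the map $k\mapsto\phi_k$ is locally constant on $K$. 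Choosing a sufficiently fine $(B_2(F)\cap K)$-equivariant finite partition of $K$ on which both $\phi_k$ and $\chi_1(\det(k))$ are constant then expresses $L(2s,\chi_1\chi_2^{-1})^{-1}f^\phi_s\big|_K$ as a finite $\bbC[q^{-s},q^s]$-linear combination of $s$-independent equivariant functions on $K$, each of which extends uniquely to a standard section by Iwasawa. This yields the desired membership in $\cB^{\rm hol}(\ul{\chi},s)$.

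For the first assertion, I would restrict attention to test functions $\phi\in\cS(F^2)$ supported in the set of primitive vectors $\mathcal{P}:=\{(x,y)\in\frak{o}^2:\max(|x|_F,|y|_F)=1\}$. For such $\phi$ and any $k\in K$ with second row $(c,d)$, the vector $(0,\alpha)k=\alpha(c,d)$ belongs to $\mathcal{P}$ if and only if $\alpha\in\frak{o}^\times$, since $(c,d)$ is already primitive. Consequently, the Godement formula collapses to
\[
f^\phi_s(k;\ul{\chi}) = \chi_1(\det(k))\int_{\frak{o}^\times}\phi(\alpha(c,d))(\chi_1\chi_2^{-1})(\alpha)\,d^\times\alpha,
\]
which is manifestly independent of $s$, so that $f^\phi_s\in\cB^{\rm std}(\ul{\chi},s)$. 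Conversely, to realize an arbitrary standard section, I would use the identifications $(B_2(F)\cap K)\backslash K\cong\bP^1(F)\cong\mathcal{P}/\frak{o}^\times$ and construct $\phi$ by a partition-of-unity argument on $\mathcal{P}$: for each primitive vector $(c_0,d_0)$, take $\phi$ to be the characteristic function of a neighborhood small enough that $\chi_1\chi_2^{-1}$ is constant on it; a short computation then identifies the resulting $f^\phi_s\big|_K$ with a multiple of the characteristic function of the corresponding $(B_2\cap K)$-orbit in $K$. Summing finitely many such elementary pieces spans all standard sections.

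The principal obstacle is the surjectivity step in the first part, where one must verify that the $\frak{o}^\times$-integration producing $f^\phi_s\big|_K$ does not kill the values one is trying to prescribe. This concern is resolved by shrinking the support of $\phi$ below the conductor of $\chi_1\chi_2^{-1}$ on $\frak{o}^\times$, at which point the integrand is essentially constant and the integral returns a nonzero scalar. Once this calibration is set up, the remaining bookkeeping --- checking that every $(B_2\cap K)$-equivariant locally constant function on $K$ is a finite sum of such elementary pieces --- is a routine profinite decomposition.
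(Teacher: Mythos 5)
Your proof is correct in spirit and, for the second assertion, essentially identical to the paper's: both isolate the Tate zeta integral $Z(\phi_a,\chi_1\chi_2^{-1},2s)$ pointwise, observe that $L(2s,\chi_1\chi_2^{-1})^{-1}f^\phi_s$ is polynomial-valued and right-invariant under a compact open subgroup independent of $s$, and then decompose over $B_2(F)\backslash\GL_2(F)/K_0$ to land in $\cB^{\rm hol}(\ul{\chi},s)$. The computations you sketch are exactly the paper's.

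For the first assertion you start from the same observation (restricting $\phi$ to the primitive vectors $\mathcal{P}$ forces the $\alpha$-integral onto $\frak{o}^\times$ and kills the $s$-dependence), but you then argue surjectivity by a spanning argument with bump functions. The paper instead writes down an explicit right inverse: given $f_s\in\cB^{\rm std}(\ul{\chi},s)$, set $\phi(x,y)=0$ for $(x,y)\notin(0,1)\GL_2(\frak{o})$ and $\phi((0,1)a)=c^{-1}\chi_1(\det a)^{-1}f_s(a)$ for $a\in\GL_2(\frak{o})$, where $c=\mathrm{vol}(\frak{o}^\times,d^\times\alpha)$. One checks directly, as in the paper, that this is a well-defined $s$-independent element of $\cS(F^2)$ (the $(B_2\cap K)$-equivariance of $f_s$ makes the formula independent of the choice of $a$) and that $f^\phi_s(a;\ul{\chi})=f_s(a)$ for $a\in\GL_2(\frak{o})$. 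This is cleaner because it avoids the bookkeeping your spanning argument requires: your claim that $f^\phi_s|_K$ for a bump $\phi=\mathds{1}_U$ is ``a multiple of the characteristic function of a $(B_2\cap K)$-orbit'' is not literally true --- the section carries the twist $\chi_1(\det k)$ and transforms on the left by $\chi_1\boxtimes\chi_2$, so it is the unique equivariant function supported over a neighborhood of the orbit, not a characteristic function --- and the conductor-calibration step, while correct, has to be run on each $\frak{o}^\times$-orbit in $\mathcal{P}$ separately, with $U$ chosen compatibly. Your approach can be completed, but the paper's explicit $\phi$ gives the conclusion in one line and avoids any partition-of-unity machinery.
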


\begin{proof}
Let $f_s\in\cB^{\rm std}(\ul{\chi},s)$ be given. We define $\phi(x,y)\in\cS(F^2)$ as follows. 
If $(x,y)\notin (0,1)\GL_2(\frak{o})$, then $\phi(x,y)=0$, while if $(x,y)=(0,1)a$ for some $a\in\GL_2(F)$, then we set 
$\phi(x,y)=c^{-1}\chi_1({\rm det}(a))^{-1}f_s(a)$, where $c={\rm vol}(\frak{o}^\x,d^\x\a)$. 
Note that $\phi(x,y)$ is independent of $s$ since $f_s$ is a standard section. 
Moreover, it's clear from the definition that $\phi\in\cS(F^2)$. We show that $f^\phi_s(a;\ul{\chi})$ is absolute convergence 
for all $a$ and $f^\phi_s(-;\ul{\chi})=f_s$. For this, it suffices to check that $f^\phi_s(a;\ul{\chi})=f_s(a)$ for 
$a\in\GL_2(\frak{o})$. But by the definition of $\phi$, we have
\begin{align*}
f^{\phi}_s(a;\ul{\chi})
&=
\chi_1({\rm det}(a))\int_{F^\x}\phi((0,\a)a)\chi_1\chi_2^{-1}|\a|_F^{2s}d^\x\a\\
&=
\chi_1({\rm det}(a))\int_{\frak{o}^\x}\phi((0,1)t(1,\a)a)\chi_1\chi_2^{-1}(\a)d^\x\a\\
&=
f_s(a).
\end{align*}
This proves the first assertion.\\

To prove the second assertion, we note that by definition, $L(2s,\chi_1\chi^{-1}_2)^{-1}f^\phi_s(-;\ul{\chi})$ is right 
$K_0$-invariant and $L(2s,\chi_1\chi^{-1}_2)^{-1}f^\phi_s(a;\ul{\chi})\in\bbC[q^{-s},q^s]$ for all $a\in\GL_2(F)$, where
$K_0\subset\GL_2(\frak{o})$ is some open compact subgroup that only depends on $\phi$. 
So our task is to show that if $f_s$ is a non-zero section of $\tau_{\ul{\chi}_{s_0}}$ that is right $K_0$-invariant for 
some open compact subgroup $K_0\subset\GL_2(\frak{o})$ that is independent of $s$, and 
$f_s(a)\in\bbC[q^{-s},q^s]$ for all $a\in\GL_2(F)$, then $f_s\in\cB^{\rm hol}(\ul{\chi},s)$. 
Let $k_1,\ldots, k_m$ be representatives of the double cosets $B_2(F)\backslash\GL_2(F)/K_0$. 
Let $r=m$ if $f_s(k_j)\ne 0$ for all $1\le j\le m$. On the other hand, if $f_s(k_j)=0$ for some $1\le j\le m$, then after 
reindex, we may assume $f_s(k_j)\neq 0$ for $1\le j\le r$ and $f_s(k_j)=0$ for $r<j\le m$. For each $1\le j\le r$, define 
$f^{(j)}_s(a)=\ul{\chi}_s(b)\delta_{B_2}(b)^{\frac{1}{2}}$ if $a=bk_jk$ for some $b\in B_2(F)$ and $k\in K_0$, and 
$f^{(j)}_s(a)=0$ if $a\notin B_2(F)k_jK_0$. Then $f^{(j)}_s\in\cB^{\rm std}(\ul{\chi},s)$ are well-defined and we have 
\[
f_s=f_s(k_1)f_s^{(1)}+\cdots+f_s(k_r)f^{(r)}_s\in\cB^{\rm hol}(\ul{\chi},s).
\]
This completes the proof.
\end{proof}

\subsubsection{Zeta integrals and the associated local factors}\label{SSS:zeta int}
Now we are in the position to introduce Novodvorsky's zeta integrals for $\pi\x\tau$ and define the associated local factors. 
For the ease of notation, let us denote $f^\phi_s(-;\mu)$ to be $f^\phi_s(-;(1,\mu))$ for every character $\mu$ of $F^\x$.
Using the embedding $\varrho'$ (cf. \eqref{E:rho'}), we form the integral
\[
Z(s,v\ot u,\phi)
=
\int_{F^\x Z_2^\square(F)\backslash{\rm G}({\rm SL}_2\x{\rm SL}_2)(F)}
W_v(\varrho'(a_1,a_2))f_s^\phi(a_1;\omega^{-1})W'_u(a_2)d^\x (a_1,a_2)
\]
for $v\in\cV_\pi$, $u\in\cV_\tau$ and $\phi\in\cS(F^2)$, where
\[
Z_2^\square(F)
=
\stt{(z(x),z(y))\in{\rm G}({\rm SL}_2\x{\rm SL}_2)(F)\mid x,y\in F}
\]
and $F^\x$ is viewed as a subgroup of ${\rm G}({\rm SL}_2\x{\rm SL}_2)(F)$ via the embedding $\iota'$.
Moreover, we have put $W'_u(a)=\Lambda_{\tau,\b{\psi}_{Z_2}}(\tau(a)u)$ for $u\in\cV_\tau$ and $a\in\GL_2(F)$.
As expected, this integral converges absolutely in some right half-plane, admits meromorphic continuation to whole 
complex plane, and becomes an element in $\bbC(q^{-s})$. Moreover, if we put
\[
I^{\rm Nov}_{\pi\x\tau}(s)=\stt{Z(s,v\ot u,\phi)\mid v\in\cV_\pi, u\in\cV_\tau, \phi\in\cS(F^2)}\subset\bbC(q^{-s})
\]
then it forms a fractional ideal that contains $1$. As a result, the ideal $I^{\rm Nov}_{\pi\x\tau}(s)$ has the generator 
$P^{\rm Nov}_{\pi\x\tau}(q^{-s})^{-1}$ for some $P^{\rm Nov}_{\pi\x\tau}(X)\in\bbC[X]$ with $P^{\rm Nov}_{\pi\x\tau}(0)=1$.
We define
\[
L^{\rm Nov}(s,\pi\x\tau)=P^{\rm Nov}_{\pi\x\tau}(q^{-s})^{-1}
\]
to be the Novodvorsky's local $L$-factor attached to $\pi\x\tau$.\\

Next, let us describe the function equation and then define the Novodvorsky's $\epsilon$- and $\gamma$-factor attached 
to $\pi\x\tau$ and $\psi$. Given $v\in\cV_\pi$ and $u\in\cV_\tau$, we put $\t{W}_v(g)=W_v(g)\omega_\pi^{-1}(\nu'(g))$
and $\t{W}'_u(a)=W'_u(a)\omega_{\tau}^{-1}({\rm det(a)})$ for $g\in{\rm GSp}_4(F)$ and $a\in\GL_2(F)$. Then 
$\t{W}_v$ (resp. $\t{W}'_u$) defines an element in the Whittaker model of $\pi^\vee$ (resp. $\tau^*$) with respect to
$\psi$ (resp. $\b{\psi}$) with $\pi^\vee$ stands for the admissible dual of $\pi$. Note that if $\tau$ is induced of 
Langlands' type, then $\tau^*$ is also. Now we define
\[
\t{Z}(s,v\ot u,\phi)
=
\int_{F^\x Z^\square(F)\backslash{\rm G}({\rm SL}_2\x{\rm SL}_2)(F)}
\t{W}_v(\varrho'(a_1,a_2))f_s^\phi(a_1;\omega)\t{W}'_u(a_2)d^\x (a_1,a_2).
\]
Then the functional equation reads
\[
\t{Z}(1-s,v\ot u,\h{\phi})
=
\gamma^{\rm Nov}(s,\pi\x\tau,\psi)
Z(s,v\ot u,\phi)
\]
every $v$, $u$ and $\phi$, where $\gamma^{\rm Nov}(s,\pi\x\tau,\psi)\in\bbC(q^{-s})$ denotes the Novodvorsky's 
$\gamma$-factor. The Novodvorsky's $\epsilon$-factor is defined in a similar way, namely, 
\[
\epsilon^{\rm Nov}(s,\pi\x\tau,\psi)
=
\gamma^{\rm Nov}(s,\pi\x\tau,\psi)\frac{L^{\rm Nov}(s,\pi\x\tau)}{L^{\rm Nov}(1-s,\pi^\vee\x\tau^*)}.
\]
Applying the identity $\h{\h{\phi}}=\phi$ and the facts $(\pi^\vee)^\vee\cong\pi$, $(\tau^*)^*\cong\tau$, 
one deduces easily that $\epsilon^{\rm Nov}(s,\pi\x\tau,\psi)$ is a unit in $\bbC[q^{-s},q^s]$.\\

Now we record results in the literature on compatibility between the local factors defined by Novodvorsky's zeta integrals
and the associated Weil-Deligne representations. For ${\rm GSp}_4(F)$, we use the local Langlands correspondence 
established by Gan-Takeda in \cite{GanTakeda2011}. We remark here that when $\pi$ has trivial central character so that 
it can also be regarded as a representation of $\SO_5(F)$, the associated $L$-parameters
defined by \cite{JiangSoudry2004}, \cite{RobertsSchmidt2007} and \cite{GanTakeda2011} are all the same.

\begin{thm}[\cite{Soudry1984}, \cite{GanTakeda2011}]\label{T:theta}
Let $\pi$ be an irreducible generic representation of ${\rm GSp}_4(F)$ and $\tau$ be an irreducible 
representation of $\GL_2(F)$. Suppose that $\phi_\pi=\phi_{\tau_1}\oplus\phi_{\tau_2}$ for some irreducible 
generic representations $\tau_1$ and $\tau_2$ of $\GL_2(F)$ with $\omega_{\tau_1}=\omega_{\tau_2}$, and if $\tau$ is 
supercuspidal, then $\tau$ is not isomorphic to any representations of the form $\tau_j^{\vee}\ot|\cdot|^{s_j}$ for some
$s_j\in\bbC$ with $j=1,2$. Then we have
\[
L^{\rm Nov}(s,\pi\x\tau)=L(s,\phi_\pi\ot\phi_\tau)
\quad\text{and}\quad
\epsilon^{\rm Nov}(s,\pi\x\tau,\psi)=\epsilon(s,\phi_\pi\ot\phi_\tau,\psi).
\]
\end{thm}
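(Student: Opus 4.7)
The plan is to exploit the hypothesis $\phi_\pi=\phi_{\tau_1}\oplus\phi_{\tau_2}$ with $\omega_{\tau_1}=\omega_{\tau_2}$: by the Gan--Takeda classification \cite{GanTakeda2011}, this identifies $\pi$ with the local theta lift to ${\rm GSp}_4(F)$ of a generic representation $\sigma$ of split ${\rm GSO}(V')(F)$, and via the accidental isomorphism $\vartheta'$ of \S\ref{SSS:vartheta'} the representation $\sigma$ is the one attached to the pair $(\tau_1,\tau_2)$. The equality of central characters is precisely what enables $\tau_1\boxtimes\tau_2$ to descend through the kernel $\iota'$ and thus correspond to a legitimate generic representation of ${\rm GSO}(V')(F)$.

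The heart of the proof is Soudry's unfolding argument from \cite{Soudry1984}. Expressing a vector $v\in\cV_\pi$ as a theta integral pairing $\sigma$ with a Bruhat--Schwartz section of the Weil representation and substituting into $Z(s,v\ot u,\phi)$, one invokes the seesaw pair $({\rm GSp}_4,{\rm O}(V'))$ versus $(\GL_2\x\GL_2,{\rm O}(V'_1)\x{\rm O}(V'_2))$. After a change of variables dictated by the embedding $\varrho'$ and the commutative diagram \eqref{E:comm diag}, the integration over $F^\x Z^\square_2(F)\backslash{\rm G}(\SL_2\x\SL_2)(F)$ splits as a product of two standard Jacquet--Piatetski-Shapiro--Shalika Rankin--Selberg integrals for $\tau_j\x\tau$ ($j=1,2$), tied together by the Godement section $f^\phi_s(\,\cdot\,;\omega^{-1})$. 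The supercuspidality of $\tau$ together with the exclusion $\tau\not\cong\tau_j^\vee\ot|\cdot|^{s_j}$ ensures that no Howe-duality pole contributes and that the fractional ideal $I^{\rm Nov}_{\pi\x\tau}(s)$ equals the product $I_{\tau_1\x\tau}(s)\cdot I_{\tau_2\x\tau}(s)$ of $\GL_2\x\GL_2$ Rankin--Selberg ideals.

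Combining this factorisation with the classical identity $L(s,\tau_j\x\tau)=L(s,\phi_{\tau_j}\ot\phi_\tau)$ of \cite{JPSS1983} and additivity of $L$-factors for direct sums yields
\[
L^{\rm Nov}(s,\pi\x\tau)=L(s,\phi_{\tau_1}\ot\phi_\tau)\,L(s,\phi_{\tau_2}\ot\phi_\tau)=L(s,\phi_\pi\ot\phi_\tau).
\]
The corresponding $\gamma$-factor identity already follows from \cite{Soudry1984}, and reading off $\epsilon^{\rm Nov}$ from its defining relation then gives the $\epsilon$-factor equality. The main obstacle is the second paragraph: the local seesaw identity and the verification that the Godement section $f^\phi_s(\,\cdot\,;\omega^{-1})$ encodes exactly the data of the Weil representation contracted against the second $\GL_2$-factor, so that collapsing the unipotent radical leaves no boundary residue. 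The hypothesis $\tau\not\cong\tau_j^\vee\ot|\cdot|^{s_j}$ is precisely the non-vanishing condition that rules out coincident exceptional poles at reducibility points of $\tau\x\tau_j^\vee$; without it spurious factors would appear on the automorphic side with no counterpart on the Galois side.
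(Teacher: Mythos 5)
The paper does not actually prove Theorem~\ref{T:theta}; it is imported as a known result from \cite{Soudry1984} and \cite{GanTakeda2011} and used as a black box in the endoscopy branch of the proof of \thmref{T:main}. Your sketch is a reasonable reconstruction of the theta-lift/seesaw argument underlying Soudry's theorem: identify $\pi$ as the theta lift of $\tau_1\boxtimes\tau_2$ from ${\rm GSO}(V')$ via the exceptional isomorphism ${\rm GSO}(V')\cong(\GL_2\times\GL_2)/\bbG_m$, unfold the Novodvorsky integral through a seesaw identity, and reduce to Jacquet--Piatetski-Shapiro--Shalika integrals for $\GL_2\times\GL_2$. The high-level strategy therefore matches the cited source.

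That said, a few points would need to be tightened before this could stand as a proof. The seesaw you quote pairs $({\rm GSp}_4,{\rm O}(V'))$ with $(\GL_2\times\GL_2,{\rm O}(V'_1)\times{\rm O}(V'_2))$, but $V'$ here is a single four-dimensional quadratic space and the pair $(\tau_1,\tau_2)$ enters via the exceptional isomorphism, not through a direct-sum decomposition $V'=V'_1\oplus V'_2$; the precise seesaw diagram has to be written down correctly. More seriously, after unfolding, the two $\GL_2\times\GL_2$ Rankin--Selberg integrals are coupled: they share the Whittaker function $W'_u$ of $\tau$ and the Schwartz datum $\phi$, so the data in the two factors cannot be varied independently, and the asserted equality $I^{\rm Nov}_{\pi\times\tau}(s)=I_{\tau_1\times\tau}(s)\cdot I_{\tau_2\times\tau}(s)$ of fractional ideals does not follow formally from a factorised integrand; this is exactly where the supercuspidality of $\tau$ and the exclusion $\tau\not\cong\tau_j^\vee\otimes|\cdot|^{s_j}$ must do real work, and the argument as written waves at this rather than carrying it out. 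Finally, the theorem allows arbitrary irreducible $\tau$, with the exclusion in force only when $\tau$ is supercuspidal; your argument treats only the supercuspidal case and should say why the non-supercuspidal case poses no difficulty (as the paper itself defers that case to \thmref{T:non-sc}).
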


More recently, we have the following results on the compatibility between $L$-factors.

\begin{thm}[\cite{LPSZ2021}, \cite{Loeffler}]\label{T:non-sc}
Let $\pi$ be an irreducible generic representation of ${\rm GSp}_4(F)$ and $\tau$ be a generic representation of 
$\GL_2(F)$ that is irreducible or induced of Langlands' type. Suppose that $\tau$ is non-supercusipdal.
Then we have 
\[
L^{\rm Nov}(s,\pi\x\tau)=L(s,\phi_\pi\ot\phi_{\b{\tau}}).
\]
\end{thm}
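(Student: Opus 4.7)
The plan is to reduce the theorem to known results for $\GL_1$-twists by exploiting the structure of a non-supercuspidal $\tau$. Since $\tau$ is generic and non-supercuspidal, it embeds as a subrepresentation of $\tau_{\ul\chi}$ for some pair $\ul\chi=(\chi_1,\chi_2)$ of characters with $|\chi_1\chi_2^{-1}(\varpi)|\le 1$. The unique irreducible quotient $\b{\tau}$ then equals $\tau_{\ul\chi}$ itself in the irreducible case, and is the one-dimensional character $\chi_1|\cdot|_F^{-1/2}$ in the reducible case $\chi_1\chi_2^{-1}=|\cdot|_F$.

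First, I would express each Whittaker function of $\tau$ via the Jacquet integral $W'_u(a)=\int_F f(wz(y)a)\b{\psi}(y)\,dy$ for a suitable section $f\in\cB(\ul\chi)$ (regularized by meromorphic continuation when needed). Substituting into the Novodvorsky integral $Z(s,v\ot u,\phi)$ and exchanging the order of integration, a change of variable absorbs the $y$-parameter into the $a_2$-component of ${\rm G}({\rm SL}_2\x{\rm SL}_2)(F)$. Then the Iwasawa decomposition $\GL_2(F)=B_2(F)\GL_2(\frak{o})$ applied to $a_2$ separates the two torus variables in $t(\a_1,\a_2)$, and the integral splits into two essentially independent Tate-like integrals over $F^\x$.

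The resulting expression is a finite $\bbC[q^{-s},q^s]$-linear combination of products of two simpler Novodvorsky integrals for ${\rm GSp}_4\x\GL_1$, one twisted by $\chi_1$ and weighted by the Godement section $f^\phi_s$, the other twisted by $\chi_2$ (each shifted by the appropriate power of $|\cdot|_F$). By known compatibility results for the ${\rm GSp}_4\x\GL_1$ case, due essentially to Takloo-Bighash and Novodvorsky, each such integral generates the ideal $L(s,\phi_\pi\ot\chi_i)\bbC[q^{-s},q^s]$ as the input data vary. This yields the upper bound
\[
I^{\rm Nov}_{\pi\x\tau}(s)\subset L(s,\phi_\pi\ot\chi_1)L(s,\phi_\pi\ot\chi_2)\bbC[q^{-s},q^s]=L(s,\phi_\pi\ot\phi_{\b{\tau}})\bbC[q^{-s},q^s]
\]
in the irreducible case, and the matching lower bound is obtained by choosing explicit test data realising every pole of the right-hand side.

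The main obstacle will be the reducible case, in which I must verify that restricting the Jacquet integral to those sections $f$ coming from the proper sub-module $\tau\subset\tau_{\ul\chi}$ eliminates the spurious factor $L(s,\phi_\pi\ot\chi_2)$ and leaves precisely $L(s,\phi_\pi\ot\chi_1|\cdot|_F^{-1/2})$, matching the one-dimensional quotient $\b{\tau}$. Concretely one needs a vanishing statement identifying the image of $\tau$ inside the Whittaker model and showing that its Novodvorsky integrals cannot produce the pole coming from $\chi_2$. A secondary technical point is the interchange of integrations and the meromorphic continuation of the Jacquet integral near the reducibility locus of $\tau_{\ul\chi}$, which follows the now-standard pattern but requires careful bookkeeping.
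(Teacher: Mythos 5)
First, a structural observation: this theorem is \emph{cited}, not proved, in the paper. It is attributed to Loeffler--Pilloni--Skinner--Zerbes and Loeffler (the accompanying remark pins the irreducible case to \cite[Theorem 8.9(i)]{LPSZ2021} and notes that the reducible case carries over). So there is no internal proof to compare your attempt against; I can only assess the sketch on its own terms.

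The strategy of substituting the Jacquet integral for the principal-series Whittaker function of $\tau$ and unfolding is a reasonable starting point, but the pivotal middle step does not hold as stated. After unfolding and Iwasawa decomposition, the Novodvorsky integrand contains the factor $W_v(\varrho'(a_1,a_2))$, which is a genuinely \emph{coupled} function of the two torus parameters coming from $a_1$ and $a_2$: it is not of the form $W^{(1)}(a_1)W^{(2)}(a_2)$, and integrating over $\GL_2(\mathfrak{o})\times\GL_2(\mathfrak{o})$ does not remove this coupling. Consequently your claimed decomposition into ``two essentially independent Tate-like integrals over $F^\times$'' -- and hence into products of $\GSp_4\times\GL_1$ Novodvorsky integrals -- fails; the fact that $L(s,\phi_\pi\otimes\phi_{\bar\tau})=L(s,\phi_\pi\otimes\chi_1)L(s,\phi_\pi\otimes\chi_2)$ is a statement about the generated fractional ideal, not a literal factorization of individual integrals. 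To obtain a product structure one would have to invoke the asymptotic expansion of $W_v$ on the torus (a finite sum $\sum_\eta\eta_1(\alpha)\eta_2(\beta)(\log)^{\cdots}\,f_\eta(\alpha,\beta)$ with $f_\eta\in\mathcal{S}(F^2)$ as in Lemma~\ref{L:upper bound} of this paper); but then the resulting one-variable integrals are Tate integrals in the \emph{exponents} of $\pi$, not $\GSp_4\times\GL_1$ Novodvorsky integrals, so the appeal to Takloo-Bighash's $\GL_1$-twist compatibility does not apply directly. The ``matching lower bound by choosing explicit test data'' is asserted without any construction, and it is precisely in the reducible and coincident-pole situations -- which you correctly flag as the hard part -- that such constructions are nontrivial. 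In short, the direction is not implausible, but the factorization step would fail as written, and the argument needs to be replaced by a careful unfolding plus asymptotic-expansion analysis (or by importing the explicit local computations of \cite{LPSZ2021,Loeffler}) rather than a reduction to known $\GL_1$ cases.
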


\begin{Remark}
Strictly speaking, results in the literature deal with the case where $\tau$ is irreducible. However, same results carry over
to the case when $\tau$ is reducible. In particular, \cite[Theorem 2.1]{Soudry1984} and \cite[Theorem 8.9(i)]{LPSZ2021}
are applicable to the case when $\tau$ is induced of Langlands' type.
\end{Remark}

\subsection{A comparison between two integrals}\label{SS:comparison}
The aim of this subsection is to prove the following proposition, which is the core of this note.

\begin{prop}\label{P:key}
Let $\pi$ be an irreducible generic representation of $\SO_5(F)$ and $\tau$ be a generic representation of 
$\GL_2(F)$ that is irreducible or induced of Langlands' type. Then we have 
\[
L(s,\pi\x\tau)=L^{\rm Nov}(s,\pi\x\tau).
\]
Here we also regard $\pi$ as a representation of ${\rm GSp}_4(F)$ with trivial central character.
\end{prop}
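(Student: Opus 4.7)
The plan is to use the commutative diagram \eqref{E:comm diag} of accidental isomorphisms to translate $\Psi(v\ot\xi_s)$ into an integral on ${\rm G}({\rm SL}_2\x{\rm SL}_2)(F)$, and then to show that, as $\xi_s$ runs through $I^{\rm gd}(\tau,s)$, the resulting integrals span precisely the fractional ideal $I^{\rm Nov}_{\pi\x\tau}(s)$. First, for $n=r=2$ the group $\b{X}_{2,2}$ is trivial, so $\Psi(v\ot\xi_s)=\int_{V_2(F)\bksl\SO_4(F)}W_v(\varrho(h))f_{\xi_s}(h)\,dh$. Under $\b{\vartheta}'$, the maximal unipotent $V_2\subset\SO_4$ is identified with the image of $Z_2^\square$, so the domain matches $F^\x Z_2^\square(F)\bksl{\rm G}({\rm SL}_2\x{\rm SL}_2)(F)$; by the commutativity of \eqref{E:comm diag} and the assumption $\omega_\pi=1$, the factor $W_v(\varrho(\vartheta'(a_1,a_2)))$ agrees with $W_v(\varrho'(a_1,a_2))$. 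Thus the problem reduces to comparing, as $\xi_s$ varies, the function $(a_1,a_2)\mapsto f_{\xi_s}(\vartheta'(a_1,a_2))$ with the Novodvorsky integrand $f^\phi_s(a_1;\omega^{-1})W'_u(a_2)$.

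Second, I would analyze the pullback $\vartheta'^{-1}(Q_2)$ of the Siegel parabolic. A direct check, using the bases in \S\ref{SSS:comm diag}, gives $\vartheta'^{-1}(Q_2)=\{(b_1,a_2)\in{\rm G}({\rm SL}_2\x{\rm SL}_2)(F)\mid b_1\in B_2(F)\}$. Combining the transformation rule \eqref{E:xi_s} with the Whittaker functional $\Lambda_{\tau,\b{\psi}_{Z_2}}$ applied on the $\GL_2$-Levi side, the function $f_{\xi_s}\circ\vartheta'$ factors, up to a compatibility of modulus functions $\delta_{Q_2}^{1/2}$ with $\delta_{B_2}^{1/2}|\det|^{s-1/2}$, as a product $f_s(a_1)W'_u(a_2)$ with $f_s\in\cB(\ul{\chi},s)$ and $\ul{\chi}=(1,\omega_\tau^{-1})$. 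Moreover, as $\xi_s$ ranges over $I^{\rm std}(\tau,s)$, the pairs $(f_s,u)$ span $\cB^{\rm std}(\ul{\chi},s)\ot\cV_\tau$; invoking \lmref{L:god sec}, these are precisely the Godement sections $f^\phi_s(-;\omega^{-1})$ paired with Whittaker data, so the $\bbC[q^{-s},q^s]$-module generated by $\Psi(v\ot\xi_s)$ for $\xi_s\in I^{\rm hol}(\tau,s)$ equals $I^{\rm Nov}_{\pi\x\tau}(s)$.

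Third, I would extend to the remaining good sections $\xi_s=\t{M}^\dagger_\psi(\tau^*,1-s)\t{\xi}_{1-s}$ from \eqref{E:good section}. Under the pullback, the intertwining operator $M(\tau,s)$ on $\rho_{\tau,s}$ restricts on the $\GL_2$-component to the operator $M(\ul{\chi},s)$ of \eqref{E:int op GL}; by \eqref{E:local coeff for GL_2} this sends the Godement section $f^\phi_s(-;\ul{\chi})$ to a scalar multiple of $f^{\h\phi}_{1-s}(-;\ul{\chi}^\iota)$. After checking that the normalization $\gamma(2s-1,\tau,\Wedge^2,\psi)$ in \eqref{E:normalization} matches $\chi_1(-1)\gamma(2s-1,\chi_1\chi_2^{-1},\psi)$ in \eqref{E:local coeff for GL_2} up to a unit in $\bbC[q^{-s},q^s]$, the image of the intertwining operator contributes nothing new to the fractional ideal. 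This yields $I_{\pi\x\tau}(s)=I^{\rm Nov}_{\pi\x\tau}(s)$ and hence $L(s,\pi\x\tau)=L^{\rm Nov}(s,\pi\x\tau)$. The main obstacle is the precise identification in the second step: verifying that $f_{\xi_s}\circ\vartheta'$ factorizes surjectively onto $\cB^{\rm std}(\ul{\chi},s)\ot\cV_\tau$, and carefully tracking the Haar measure normalizations on $N_2\subset\SO_4$ versus $Z_2\subset\GL_2$ along with the modulus factors. A secondary subtlety is the reducible case: there one must use that every representation induced of Langlands' type has a unique Whittaker model (\S\ref{SSS:induced of Langlands' type}) to ensure $\Lambda_{\tau,\b{\psi}_{Z_2}}$ is well-defined.
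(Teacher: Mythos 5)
Your translation strategy in the first two steps is essentially the paper's: one uses the commutative diagram \eqref{E:comm diag} and (in the paper's language) \lmref{L:key} to realize the Rankin-Selberg integrand $f_{\xi_s}\circ\vartheta'$ as $f^\phi_s(a_1;\omega_\tau^{-1})W'_u(a_2)$, and one also keeps track of how the intertwining operator $M^\dagger_\psi(\tau,s)$ transports to $M^\dagger_\psi(\ul{\chi},s)$ so that $\t{\Psi}(v\ot\xi_s)=\t{Z}(1-s,v\ot u,\h{\phi})$. That part is fine; the details the paper supplies (e.g., the surjection from $I'^{\rm std}(\tau\bt\omega_\tau^{-1},s)$ onto $I^{\rm std}(\tau,s)$, and \corref{C:LC} to match the two normalizations of the intertwining operator) fill in exactly the measure/modulus/normalization bookkeeping you flag as the "main obstacle."

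The gap is in your third step, where you assert directly that $I_{\pi\x\tau}(s)=I^{\rm Nov}_{\pi\x\tau}(s)$. This is not immediate. By \lmref{L:god sec}, a Godement section $f^\phi_s(-;\omega_\tau^{-1})$ need not be a holomorphic section; only $L(2s,\omega_\tau)^{-1}f^\phi_s$ is. So a Novodvorsky datum $(v,u,\phi)$ pulls back to an $\SO_4$-section that is, in general, a holomorphic section divided by $L(2s,\omega_\tau)$, and there is no reason for such a section to lie in $I^{\rm gd}(\tau,s)$. Conversely, the image of the normalized intertwining operator applied to a holomorphic section in the $\tau^*$-picture need not be a holomorphic section on the $\tau$-side — \cite[Theorem 5.1]{CasselmanShahidi1998} plus \corref{C:LC} only give $L(2s,\omega_\tau)^{-1}M^\dagger_\psi(\tau^*,1-s)\xi^*_{1-s}\in I^{\rm hol}(\tau,s)$ — so the intertwining operator \emph{can} contribute genuinely new denominators. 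What the translation step actually proves is the sandwich
\[
I_{\pi\x\tau}(s)\subseteq I^{\rm Nov}_{\pi\x\tau}(s)\subseteq L(2s,\omega_\tau)\,I_{\pi\x\tau}(s),
\]
not equality. The paper then closes the gap by a separate argument you omit: normalize $\omega_\tau$ to be unitary using the twist invariance \eqref{E:shift}, write $L(s,\pi\x\tau)=L^{\rm Nov}(s,\pi\x\tau)P(q^{-s})$ with $P$ a factor of $L(2s,\omega_\tau)^{-1}$ and likewise $L(1-s,\pi\x\tau^*)=L^{\rm Nov}(1-s,\pi\x\tau^*)P^*(q^{1-s})$ with $P^*$ a factor of $L(2-2s,\omega_\tau^{-1})^{-1}$, then use the equality $\gamma^{\rm Nov}(s,\pi\x\tau,\psi)=\gamma(s,\pi\x\tau,\psi)$ (itself a consequence of the integral identities) to force $P/P^*$ to be a unit in $\bbC[q^{-s},q^s]$; since $L(2s,\omega_\tau)$ and $L(2-2s,\omega_\tau^{-1})$ have no common poles when $\omega_\tau$ is unitary, both $P$ and $P^*$ must be units, giving the equality. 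Without this final pole-disjointness/$\gamma$-factor argument, your proof does not rule out a discrepancy of the form $L(2s,\omega_\tau)$ between the two $L$-factors.
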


To prove proposition \propref{P:key}, some preparations are needed. 

\subsubsection{Induced representations of ${\rm GSO}_4(F)$}
Using the basis $\stt{e_1', e_2', f_2', f_1'}$ in \S\ref{SSS:comm diag}, we have the following commutative diagram
\[
\begin{tikzcd}
&1\arrow{r}& F^\x\arrow{d}{\parallel}\arrow{r}{\iota'}&{\rm G}({\rm SL}_2\x{\rm SL}_2)(F)\arrow{d}
\arrow{r}{\vartheta'}&\SO_4(F)\arrow{d}\arrow{r}&1\\
&1\arrow{r}& F^\x\arrow{r}{\iota'}&\GL_2(F)\x\GL_2(F)\arrow{r}{\t{\vartheta}'}&{\rm GSO}_4(F)\arrow{r}&1
\end{tikzcd}
\]
where the second and the third vertical lines are natural inclusions. Now if $\tau_1$ and $\tau_2$ are 
representations of $\GL_2(F)$ with $\omega_{\tau_1}\omega_{\tau_2}=1$, then $\tau_1\bt\tau_2$ can be 
viewed as a representation of ${\rm GSO}_4(F)$ via the bottom exact sequence. On the other hand, let 
$Q_2'\subset{\rm GSO}_4$ be the Siegel parabolic subgroup whose intersection with $\SO_4$ is $Q_2$. 
It admits a Levi decomposition $Q'_2=M'_2\ltimes N_2$ with
\[
M_2'(F)
=
\stt{m_2(a,\beta)=\pMX{a}{}{}{\beta a^*}\mid a\in\GL_2(F)\,\,\text{and}\,\,\beta\in F^\x}\cong\GL_2(F)\x\GL_1(F). 
\] 
Given a representation $\tau$ of $\GL_2(F)$ and character $\mu$ of $F^\x$, we obtain a representation of 
$Q_2'(F)$ on $\cV_\tau$ with the action $(\tau\bt\mu)(m_2(a,\beta)n)u=\mu(\beta)\tau(a)u$ for $u\in\cV_\tau$
and $n\in N_2(F)$. Let 
\[
\rho'_{\tau\bt\mu}
=
{\rm Ind}_{Q'_2(F)}^{{\rm GSO}_4(F)}(\tau\bt\mu)
\]
be a normalized induced representation of ${\rm GSO}_4(F)$ whose underlying space $I'(\tau\bt\mu)$ consisting 
smooth functions $\xi':{\rm GSO}_4(F)\to\cV_\tau$ satisfying 
\[
\xi'(m_2(a,\beta)nh)=\delta_{Q'_2}^{\frac{1}{2}}(m_2(a,\beta))\mu(\beta)\tau(a)\xi'(h)
\]
for $n\in N_2(F)$ and $h\in{\rm GSO}_4(F)$, where $\delta_{Q'_2}$ is the modulus function of $Q'_2$ and is given by 
$\delta_{Q'_2}(m_2(a,\beta))=|\det(a)|_F|\beta|_F^{-1}$. More generally, for $s_0\in\bbC$, let $\rho'_{\tau\bt\mu,s_0}$ 
be the representation of ${\rm GSO}_4(F)$ inducing from the datum 
$(\tau\ot|\cdot|_F^{s_0-\frac{1}{2}}, \mu|\cdot|_F^{\frac{1}{2}-s_0})$. Its underlying space is also denoted by
$I'(\tau\bt\mu,s_0)$. The spaces $I'^{\rm std}(\tau\bt\mu,s)$ and $I'^{\rm hol}(\tau\bt\mu,s)$
of standard and holomorphic sections of $\rho'_{\tau\bt\mu,s_0}$ can be defined in similar ways as in
\S\ref{SSS:std and hol sec}.

\subsubsection{A key lemma}
The following lemma, whose proof is easy, plays an important role in the sequel.

\begin{lm}\label{L:key}
Let $\ul{\chi}=(\chi_1,\chi_2)$ be a pair of characters of $F^\x$ and $\tau$ be a representation of $\GL_2(F)$ with 
$\omega_\tau\chi_1\chi_2=1$. Then under the identification $(\GL_2(F)\x\GL_2(F))/F^\x\cong{\rm GSO}_4(F)$, we have
$\tau_{\ul{\chi}}\bt\tau\cong\rho'_{(\tau\ot\chi_1)\bt\chi_2}$.
\end{lm}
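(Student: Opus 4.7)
The plan is to prove \lmref{L:key} by combining induction in stages with descent through the central quotient, then checking that the descended inducing datum is exactly the one appearing on the right-hand side. The two main ingredients are (i) a general fact that induction commutes with external tensor products, and (ii) a direct computation of $\t{\vartheta}'$ restricted to $B_2(F)\x\GL_2(F)$, which identifies the image with $Q'_2(F)$ and matches the inducing characters.

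First I will observe that as representations of $\GL_2(F)\x\GL_2(F)$,
\[
\tau_{\ul{\chi}}\bt\tau
\,\cong\,
\mathrm{Ind}_{B_2(F)\x\GL_2(F)}^{\GL_2(F)\x\GL_2(F)}(\ul{\chi}\bt\tau),
\]
using (normalized) induction and the identity $\delta_{B_2\x\GL_2}^{1/2}=\delta_{B_2}^{1/2}\bt 1$. Next, I will check that
$\t{\vartheta}'^{-1}(Q'_2(F))=B_2(F)\x\GL_2(F)$ and that the restriction of $\t{\vartheta}'$ to this subgroup is surjective onto $Q'_2(F)$ with kernel $\iota'(F^\x)$. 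Indeed, $Q'_2$ is the stabilizer of the Lagrangian $U=\mathrm{span}(e_1',e_2')\subset V'$, and under $v\mapsto a_1 v a_2^{-1}$ the subspace $U$ (consisting of matrices whose second row vanishes) is preserved precisely when $a_1e_1\in Fe_1$, i.e.\ when $a_1\in B_2(F)$.

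Now I will verify directly that $\ul{\chi}\bt\tau$ descends to $(\tau\ot\chi_1)\bt\chi_2$ on $Q'_2(F)$. The central subgroup $\iota'(F^\x)=\{(\a I_2,\a I_2)\}$ acts on $\ul{\chi}\bt\tau$ by $\chi_1\chi_2\omega_\tau(\a)$, which is trivial by hypothesis, so the descent is well-defined. Given $m_r(a,\beta)\in M'_2(F)$ with $r=2$, I will exhibit the explicit lift $(b_1,a_2)=(t(\det(a),\beta),\,a)\in B_2(F)\x\GL_2(F)$: a short calculation using $v\mapsto a_1va_2^{-1}$ shows that on the basis $\{e_1',e_2',f_2',f_1'\}$ the image of $(b_1,a)$ acts on $U$ as $a$ and has similitude factor $\det(b_1)/\det(a)=\beta$, so $\t{\vartheta}'(b_1,a)=m_2(a,\beta)$. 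For this lift,
\[
(\ul{\chi}\bt\tau)(b_1,a)=\chi_1(\det a)\chi_2(\beta)\tau(a)=\bigl((\tau\ot\chi_1)\bt\chi_2\bigr)(m_2(a,\beta)),
\]
and the elements of $\t{\vartheta}'^{-1}(N_2(F))$ only affect the unipotent direction (as one sees from the lift $(z(\beta_1),I_2)\mapsto\bigl(\begin{smallmatrix}I_2 & -\beta_1\cdot\mathrm{diag}(1,-1)\\ 0&I_2\end{smallmatrix}\bigr)\in N_2(F)$), on which neither side acts. Finally, I will check that the modulus characters agree: $\delta_{B_2}(t(\det a,\beta))=|\det(a)/\beta|_F=\delta_{Q'_2}(m_2(a,\beta))$, so the normalized induction on both sides introduces the same $\delta^{1/2}$-factor.

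With these pieces the conclusion is immediate: descent commutes with induction from a parabolic containing the kernel of the quotient, so
\[
\tau_{\ul{\chi}}\bt\tau
\,\cong\,
\mathrm{Ind}_{Q'_2(F)}^{\mathrm{GSO}_4(F)}\bigl((\tau\ot\chi_1)\bt\chi_2\bigr)
=
\rho'_{(\tau\ot\chi_1)\bt\chi_2}.
\]
There is no real obstacle here beyond bookkeeping; the only points requiring care are (a) getting the signs right in the basis $\{e_1',e_2',f_2',f_1'\}$ so that one genuinely lands in $N_2(F)$, and (b) matching the two modulus factors, both of which come out to $|\det(a)/\beta|_F^{1/2}$. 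Everything else is a mechanical consequence of induction-in-stages and the short exact sequence defining $\bar{\vartheta}'$.
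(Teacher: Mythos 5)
Your proof is correct and follows essentially the same route as the paper's, which simply writes down the intertwining map $f\ot u\mapsto\xi'_{f\ot u}$ with $\xi'_{f\ot u}(\t{\vartheta}'(a_1,a_2))=f(a_1)\tau(a_2)u$ and asserts that it lands in $I'((\tau\ot\chi_1)\bt\chi_2)$. Your induction-in-stages plus descent argument, together with the explicit computation of $\t{\vartheta}'$ on $B_2(F)\x\GL_2(F)$ matching the inducing data and modulus characters, is precisely the verification the paper leaves implicit.
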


\begin{proof}
Let $f\in\cB(\ul{\chi})$ and $u\in\cV_\tau$. Define a function $\xi'_{f\ot u}:{\rm GSO}_4(F)\to\cV_\tau$ by  
\begin{equation}\label{E:xi'}
\xi'_{f\ot u}(\t{\vartheta}'(a_1,a_2))=f(a_1)\tau(a_2)u
\end{equation}
for $a_1, a_2\in\GL_2(F)$. Then $\xi'_{f\ot u}\in I'((\tau\ot\chi_1)\bt\chi_2)$ and the map $f\ot u\mapsto \xi'_{f\ot u}$ from 
$\cB(\ul{\chi})\ot\cV_\tau$ onto $I'((\tau\ot\chi_1)\bt\chi_2)$ meets the requirement.
\end{proof}

\lmref{L:key} has an application to compute the local coefficients $\gamma(s,\tau,\Wedge^2,\psi)$. For this we note 
that the restriction induces a surjection from $I'^{\rm std}(\tau\bt\mu, s)$ onto $I^{\rm std}(\tau,s)$.

\begin{cor}\label{C:LC}
Let $\tau$ be a generic representation of $\GL_2(F)$ that is irreducible or induced of Langlands' type. We have 
$\gamma(s,\tau,\Wedge^2,\psi)=\gamma(s,\omega_\tau,\psi)$.
\end{cor}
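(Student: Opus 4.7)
The plan is to apply \lmref{L:key} to reduce the computation of the exterior square local coefficient $\gamma(s,\tau,\Wedge^2,\psi)$ for the Siegel parabolic of $\SO_4(F)$ to a classical $\GL_2(F)$ local coefficient, and then to invoke equation \eqref{E:local coeff for GL_2} to conclude.

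First I would choose $\ul{\chi}=(\chi_1,\chi_2)=(1,\omega_\tau^{-1})$, which satisfies $\omega_\tau\chi_1\chi_2=1$. By \lmref{L:key} this furnishes an isomorphism of ${\rm GSO}_4(F)$-representations
\[
\tau_{\ul{\chi},s_0}\boxtimes\tau\;\cong\;\rho'_{\tau\boxtimes\omega_\tau^{-1},s_0},
\]
functorial in $s_0$ through the standard twist. Restricted to $\SO_4(F)$ via the commutative diagram \eqref{E:comm diag}, the right-hand side becomes the principal series $I(\tau,s_0)$. Since $r=2$ is even we have $\t{Q}_2=Q_2$, so $M(\tau,s)$ maps $I(\tau,s)$ to $I(\tau^*,1-s)$; the standard $\GL_2$-isomorphism $\tau^*\cong\tau\otimes\omega_\tau^{-1}$ lets me identify the target after restriction as $\tau_{\ul{\chi}^\iota,1-s}\boxtimes\tau$.

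Next I would analyze $M(\tau,s)\xi_s(h)=\int_{N_2(F)}\xi_s(\omega_2^{-1}\t n\,h)\,d\t n$ through this identification. Computing lifts via \eqref{E:comm diag}, one checks that the Weyl element $\omega_2\in\SO_4(F)$ and the one-dimensional unipotent $N_2(F)$ are supported, up to a central contribution on the second $\SL_2$-factor, inside the first $\SL_2$-factor of ${\rm G}(\SL_2\times\SL_2)(F)$; this central contribution acts on $\tau$ by the scalar $\omega_\tau(-1)$. Writing $\xi_{f_s\otimes u}$ for the section attached to $(f_s,u)\in\cB^{\rm std}(\ul{\chi},s)\times\cV_\tau$ via the explicit formula \eqref{E:xi'}, a short calculation then gives
\[
M(\tau,s)\xi_{f_s\otimes u}\big(\t{\vartheta}'(b_1,b_2)\big)\;=\;M(\ul{\chi},s)f_s(b_1)\,\tau(b_2)u,
\]
so that $M(\tau,s)$ corresponds to $M(\ul{\chi},s)\otimes\mathrm{id}_{\cV_\tau}$ under the isomorphism. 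The two parasitic factors $\chi_2(-1)$ (from the $B_2$-equivariance $w'=t(1,-1)w$) and $\omega_\tau(-1)$ (from the central lift) combine to $\omega_\tau(-1)^2=1$ and cancel.

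Finally I invoke equation \eqref{E:local coeff for GL_2} with $\chi_1(-1)=1$ and $\chi_1\chi_2^{-1}=\omega_\tau$ to get
\[
\gamma(2s-1,\omega_\tau,\psi)\,M(\ul{\chi},s)f^\phi_s(-;\ul{\chi})\;=\;f^{\widehat\phi}_{1-s}(-;\ul{\chi}^\iota).
\]
Since Godement sections span $\cB^{\rm std}(\ul{\chi},s)$ by \lmref{L:god sec}, the scalar $\gamma(2s-1,\omega_\tau,\psi)$ is precisely the constant that turns $M(\tau,s)$ into the isomorphism $M^\dagger_\psi(\tau,s)$ of \eqref{E:normalization}, with uniqueness guaranteed by \eqref{E:comp of int op}. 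Comparing with $M^\dagger_\psi(\tau,s)=\gamma(2s-1,\tau,\Wedge^2,\psi)M(\tau,s)$ yields $\gamma(2s-1,\tau,\Wedge^2,\psi)=\gamma(2s-1,\omega_\tau,\psi)$, equivalently the asserted identity. The hard part will be the second step: carrying out the explicit lifts of $\omega_2$ and $N_2(F)$ through \eqref{E:comm diag} and matching the Haar measure $d\t n$ on $N_2(F)$ with the self-dual measure on $F$ used in \eqref{E:int op GL}; the clean cancellation $\omega_\tau(-1)^2=1$ is what ultimately makes the identification scalar-free.
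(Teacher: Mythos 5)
Your overall strategy — reduce via Lemma~\ref{L:key} to a $\GL_2$ computation — is the same as the paper's, and you correctly locate the ingredients (the identification $\cB^{\rm std}(\ul\chi,s)\ot\cV_\tau\cong I'^{\rm std}(\tau\bt\omega_\tau^{-1},s)$, the Godement-section formula \eqref{E:local coeff for GL_2}, the surjectivity of restriction onto $I^{\rm std}(\tau,s)$). However there are two genuine errors.

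\textbf{The ``central contribution'' is not central.} You claim the lift of $\omega_2$ lies in the first $\SL_2$-factor up to a central contribution on the second factor acting by $\omega_\tau(-1)$. This is false: the paper computes $\vartheta'(w,d_2)=\omega_2$ where $d_2=\mathrm{diag}(1,-1)$, which is \emph{not} central in $\GL_2(F)$, and $\tau(d_2)$ is a genuine operator, not the scalar $\omega_\tau(-1)$. Consequently your formula $M(\tau,s)\xi_{f_s\ot u}(\t\vartheta'(b_1,b_2))=M(\ul\chi,s)f_s(b_1)\,\tau(b_2)u$ is wrong: the paper's \eqref{E:int op conn} gives $M(\ul\chi,s)f_s(b_1)\,\tau(d_2 b_2)u$. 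Your purported ``$\omega_\tau(-1)^2=1$'' cancellation does not occur. What actually happens is that the operator $\tau(d_2)$ appears on both sides of the defining relation for the local coefficient and cancels \emph{vectorially}, not scalarly.

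\textbf{The closing uniqueness argument has a hole.} You argue that $\gamma(2s-1,\omega_\tau,\psi)$ must equal $\gamma(2s-1,\tau,\Wedge^2,\psi)$ because each is ``the constant that turns $M(\tau,s)$ into $M^\dagger_\psi(\tau,s)$, with uniqueness guaranteed by \eqref{E:comp of int op}.'' But \eqref{E:comp of int op} only pins down the \emph{product} of the two normalizations $\gamma(2s-1,\tau,\Wedge^2,\psi)$ and $\gamma(1-2s,\tau^*,\Wedge^2,\psi)$, not each factor individually; it provides no intrinsic characterization of $M^\dagger_\psi(\tau,s)$ that you could recognize from the $\GL_2$ side. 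The local coefficient $\gamma(s,\tau,\Wedge^2,\psi)$ is \emph{defined} (following \cite{Soudry2000}) via the Whittaker-functional functional equation, i.e.\ the paper's \eqref{E:LC}
\[
\int_F \xi_s(\omega_2 n_2(y))\psi(y)\,dy=\gamma(2s-1,\tau,\Wedge^2,\psi)\int_F M(\tau,s)\xi_s(\omega_2 n_2(y))\psi(y)\,dy,
\]
and that is the equation you must evaluate. The paper does this: substituting a section coming from $f_s\ot u$ and using $\vartheta'(w,d_2)=\omega_2$, $\vartheta'(z(y),I_2)=n_2(y)$, both sides acquire a common factor $\tau(d_2)u$ which cancels, and the resulting scalar identity is the $\GL_2$ Whittaker-functional functional equation, which by \cite[Proposition 4.5.9]{Bump1998} identifies the local coefficient as $\gamma(2s-1,\omega_\tau,\psi)$. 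Your proposal never engages with \eqref{E:LC}, so it does not actually determine $\gamma(s,\tau,\Wedge^2,\psi)$.

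Incidentally, the measure-matching concern you flag at the end is already disposed of in the paper: for $r=2$, $d\t n$ is explicitly taken to be the $\psi$-self-dual measure on $F$, agreeing with $dy$ in \eqref{E:int op GL}.
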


\begin{proof}
When $\tau$ is irreducible, the assertion would follow from the results in \cite{Shahidi1990} and 
\cite{CogdellShahidiTsai2017}, but since $\tau$ can be reducible, we shall provide a proof here.
Let $\xi_s\in I^{\rm std}(\tau,s)$. Then by definition of the local coefficient,  
\begin{equation}\label{E:LC}
\int_F \xi_s(\omega_2n_2(y))\psi(y)dy
=
\gamma(2s-1,\tau,\Wedge^2,\psi)
\int_F M(\tau,s)\xi_s(\omega_2n_2(y))\psi(y)dy
\end{equation}
where $dy$ is an arbitrary Haar measure on $F$, and we have slightly abuse the notation (cf. \eqref{E:ur of Q})
to denote 
\[
n_2(y)
=
\begin{pmatrix}
1&&-y&\\
&1&&y\\
&&1\\
&&&1
\end{pmatrix}\in\SO_4(F)
\]
for $y\in F$. To compute $\gamma(s,\tau,\Wedge^2,\psi)$, the idea is find a proper $\ul{\chi}=(\chi_1,\chi_2)$ and then 
use \lmref{L:key}. More precisely, let $\chi_1=1$, $\chi_2=\omega_\tau^{-1}$ and consider the induced representation 
$\rho'_{\tau\bt\omega^{-1}_\tau,s_0}$ of ${\rm GSO}_4(F)$. On one hand, we have 
\[
\tau_{\ul{\chi}_{s_0}}\bt\tau\cong\rho'_{\tau\bt\omega_\tau^{-1},s_0}
\]
by \lmref{L:key} and the isomorphism (cf. \eqref{E:xi'})
\[
\cB^{\rm std}(\ul{\chi},s)\ot_\bbC\cV_\tau\cong I'^{\rm std}(\tau\bt\omega_{\tau}^{-1},s).
\]
On the other hand, we have a surjection from $I'^{\rm std}(\tau\bt\omega_{\tau}^{-1},s)$ onto $I^{\rm std}(\tau,s)$ 
given by the restriction. In view of these, we may assume that $\xi_s$ in \eqref{E:LC} is a restriction of $\xi'_{f_s\ot u}$
 for some $f_s\in\cB^{\rm std}(\ul{\chi},s)$ and $u\in\cV_\tau$. Here $\xi'_{f_s\ot u}$ is the $\cV_\tau$-valued function on
${\rm GSO}_4(F)$ defined by \eqref{E:xi'} with $f$ replaced by $f_s$.\\

Now simple computations show that $\vartheta'(w,d_2)=\omega_2$ and $\vartheta'(z(y), I_2)=n_2(y)$ for $y\in F$.
We remind here that $d_2$ is given by \eqref{E:d_r}. From these, we get that 
\[
\xi_s(\omega_2n_2(y))
=
\xi'_{f_s\ot u}(\vartheta'(wz(y),d_2))
=
f_s(wz(y))\tau(d_2)u
\]
and
\begin{equation}\label{E:int op conn}
M(\tau,s)\xi_s(\vartheta'(a_1,a_2))
=
\int_F\xi'_{f_s\ot u}(\vartheta'(wz(y)a_1,d_2a_2))dy
=
M(\ul{\chi},s)f_s(a_1)\tau(d_2a_2)u
\end{equation}
for $(a_1,a_2)\in{\rm G}({\rm SL}_2\x{\rm SL}_2)(F)$ by \eqref{E:int op GL}. It follows that the LHS of 
\eqref{E:LC} becomes  
\[
\tau(d_2)u\int_F f_s(wz(y))\psi(y)dy
\]
while the RHS of \eqref{E:LC} can be written as
\[
\gamma(2s-1,\tau,\Wedge^2,\psi)
\tau(d_2)u\int_F M(\ul{\chi},s)f_s(wz(y))\psi(y)dy.
\]
Combining these with \cite[Proposition 4.5.9]{Bump1998}, the proof follows.
\end{proof}

\subsubsection{Proof of \propref{P:key}}
We first relate the Rankin-Selberg integrals and Novodvorsky's zeta integrals. The idea of which is similar to that of 
the proof of \corref{C:LC}. Let $\xi_s\in I^{\rm std}(\tau,s)$ and $v\in\cV_\pi$. Recall that the Rankin-Selberg integral 
$\Psi(v\ot\xi_s)$ is defined by 
\[
\Psi(v\ot\xi_s)
=
\int_{V_2(F)\backslash\SO_4(F)} W_v(\varrho(h))f_{\xi_s}(h)dh
\]
with $f_{\xi_s}(h)=\Lambda_{\tau,\b{\psi}_{Z_2}}(\xi_s(h))$ for $h\in\SO_4(F)$.\\

Let $\ul{\chi}=(1,\omega_\tau^{-1})$ be a pair of characters of $F^\x$. Then we have the isomorphism 
(cf. \eqref{E:xi'})
\[
\cB^{\rm std}(\ul{\chi},s)\ot_\bbC\cV_\tau\cong I'^{\rm std}(\tau\bt\omega_{\tau}^{-1},s)
\]
between representations of ${\rm GSO}_4(F)$ and the surjection from $I'^{\rm std}(\tau\bt\omega_{\tau}^{-1},s)$ onto 
$I^{\rm std}(\tau,s)$ given by the restriction. To connect the integrals, assume that $\xi_s$ is a restriction of $\xi'_{f_s\ot u}$
 for some $f_s\in\cB^{\rm std}(\ul{\chi},s)$ and $u\in\cV_\tau$. Here $\xi'_{f_s\ot u}$ is the $\cV_\tau$-valued function on
${\rm GSO}_4(F)$ defined by \eqref{E:xi'} with $f$ replaced by $f_s$. By \lmref{L:god sec}, we may further assume that 
$f_s=f_s^\phi(-;\omega_\tau^{-1})$ for some $\phi\in\cS(F^2)$. Recall that we also denote $f^\phi_s(-;\mu)$ to be 
$f^\phi_s(-;(1,\mu))$ for every character $\mu$ of $F^\x$. Then for $(a_1,a_2)\in{\rm G}({\rm SL}_2\x{\rm SL}_2)(F)$, we 
have
\[
f_{\xi_s}(\vartheta'(a_1,a_2))
=
\Lambda_{\tau,\b{\psi}_{Z_2}}(\xi'_{f_s\ot u}(\vartheta'(a_1,a_2)))
=
f_s(a_1)W'_u(a_2)
=
f^\phi_s(a_1;\omega_\tau^{-1})W'_u(a_2).
\]
Now since $\vartheta'$ maps $Z_2^\square(F)$ onto $V_2(F)$, one sees that
\begin{equation}\label{E:RS=N}
\Psi(v\ot\xi_s)
=
\int_{F^\x Z_2^\square(F)\backslash{\rm G}({\rm SL}_2\x{\rm SL}_2)(F)}
W_v(\vartheta(\varrho'(a_1,a_2)))f_s^\phi(a_1;\omega_\tau^{-1})W'_u(a_2)d^\x (a_1,a_2)
=
Z(s,v\ot u,\phi)
\end{equation}
by the commutative diagram \eqref{E:comm diag}.\\ 

We also need to connect $\t{\Psi}(v\ot\xi_s)$ with $\t{Z}(1-s,v\ot u,\h{\phi})$. For this we first recall that 
\[
\t{\Psi}(v\ot\xi_s)
=
\int_{V_2(F)\backslash\SO_4(F)} W_v(\varrho(h))f^*_{M^\dagger_\psi(\tau,s)\xi_s}(h)dh
\]
with $f^*_{\xi_s}(h)=\Lambda_{\tau^*,\b{\psi}_{Z_2}}(M^\dagger_\psi(\tau,s)\xi_s(h))$ for $h\in\SO_4(F)$.
Let  $(a_1,a_2)\in{\rm G}({\rm SL}_2\x{\rm SL}_2)(F)$. Then by \eqref{E:int op conn}, we have
\[
f^*_{M^\dagger_\psi(\tau,s)\xi_s}(\vartheta'(a_1,a_2))
=
M^\dagger_\psi(\ul{\chi},s)f_s^\phi(a_1;\omega_\tau^{-1})W'_u(a_2)
\]
with $M^\dagger_\psi(\ul{\chi},s):=\gamma(2s-1,\tau,\Wedge^2,\psi)M(\ul{\chi},s)$. Now since 
\[
M^\dagger_\psi(\ul{\chi},s)f^\phi_s(a_1;\omega_\tau^{-1})=f^{\h{\phi}}_{1-s}(a_1;(\omega_\tau^{-1},1))
\]
by \eqref{E:local coeff for GL_2} and \corref{C:LC}, and
\[
f^{\h{\phi}}_{1-s}(a_1;(\omega_\tau^{-1},1))W'_u(a_2)=\t{W}'_u(a_2)f^{\h{\phi}}_{1-s}(a_2;\omega_\tau)
\]
due to $\det(a_1)=\det(a_2)$, we get that 
\begin{equation}\label{E:dual RS=N}
\t{\Psi}(v\ot\xi_s)=\t{Z}(1-s,v\ot u,\h{\phi})
\end{equation}
by a similar argument.\\

The identities just derived have two consequences. The first consequence is the following identities
\begin{equation}\label{E:RS=N=Gal gamma}
\gamma^{\rm Nov}(s,\pi\x\tau,\psi)
=
\gamma(s,\pi\x\tau,\psi)
=
\gamma(s,\phi_\pi\ot\phi_{\b{\tau}},\psi).
\end{equation}
Indeed, if $\tau$ is irreducible, then it comes from \eqref{E:RS=N}, \eqref{E:dual RS=N} and \thmref{T:RS=Gal gamma}.
Suppose that $\tau$ is reducible so that $\tau=\tau_{\ul{\mu}}$ for a pair of characters $\ul{\mu}=(\mu_1,\mu_2)$ of 
$F^\x$ with $\mu_1\mu_2^{-1}=|\cdot|_F$. Note that $\phi_{\b{\tau}}=\phi_{\mu_1}\oplus\phi_{\mu_2}$.
In this case, the first identity remains valid, so it suffices to establish the 
second identity. But this follows at once from the multiplicativity of the Rankin-Selberg $\gamma$-factors
%$\gamma(s,\pi\x\tau,\psi)=\gamma(s,\pi\x\mu_1,\psi)\gamma(s,\pi\x\mu_2,\psi)$ 
proved in \cite{Soudry2000} and \thmref{T:RS=Gal gamma}. As another consequence, we show that
\begin{equation}\label{E:ideals}
I_{\pi\x\tau}(s)\subseteq I_{\pi\x\tau}^{\rm Nov}(s)\subseteq L(2s,\omega_\tau)I_{\pi\x\tau}(s).
\end{equation}
To verify the first containment, it is enough to show that $\Psi(v\ot\xi_s)\in I^{\rm Nov}_{\pi\x\tau}(s)$ for $v\in\cV_\pi$
and $\xi_s\in I^{\rm std}(\tau,s)$ or $\xi_s=M^\dagger_\psi(\tau^*,1-s)\xi^*_{1-s}$ for some 
$\xi^*_{1-s}\in I^{\rm std}(\tau^*,1-s)$. If $\xi_s\in I^{\rm std}(\tau,s)$, then the assertion follows from \eqref{E:RS=N}.
Suppose that $\xi_s=M^\dagger_\psi(\tau^*,\psi)\xi^*_{1-s}$ for some $\xi^*_{1-s}\in I^{\rm std}(\tau^*,1-s)$.
We may assume that $\xi^*_{1-s}$ is the restriction of $\xi'_{f^*_{1-s}\ot u}$ with 
$f^*_{1-s}=f^\phi_{1-s}(-;\omega_\tau)\in\cB^{\rm std}((1,\omega_\tau),1-s)$ for some $u\in\cV_\tau$ and 
$\phi\in\cS(F^2)$. Then
\[
\Psi(v\ot\xi_s)
=
Z(s,v\ot u', \h{\phi})
\]
where $u'$ is the unique element in $\cV_\tau$ such that $W'_{u'}(a)=W'_u(d_2a^*)\omega^{-1}_\tau(a)$ for 
$a\in\GL_2(F)$. This can be derived in a similar way as we obtained \eqref{E:dual RS=N}. This verifies the first 
containment, while the second containment follows from \lmref{L:god sec} and \eqref{E:RS=N}.\\

Now we are ready to prove the identity between the $L$-factors. First note that if $\tau=\tau_0\ot|\cdot|_F^{s_0}$ for some 
representation $\tau_0$ of $\GL_2(F)$ and $s_0\in\bbC$, then 
\begin{equation}\label{E:shift}
L^{\rm Nov}(s,\pi\x\tau)=L^{\rm Nov}(s+s_0,\pi\x\tau_0)
\quad\text{and}\quad 
L(s,\pi\x\tau)=L(s+s_0,\pi\x\tau_0).
\end{equation} 
So we may assume without loss of generality that $\omega_\tau$ is unitary.
The containments in \eqref{E:ideals} imply 
\[
L(s,\pi\x\tau)=L^{\rm Nov}(s,\pi\x\tau)P(q^{-s})
\]
where $P(q^{-s})\in\bbC[q^{-s},q^s]$ is a factor of $L(2s,\omega_\tau)^{-1}$. Replacing
\footnote{Strictly speaking, we shall also replace $\pi$ with $\pi^\vee$ in $L^{\rm Nov}(s,\pi\x\tau)$, but since 
$\pi$ has trivial central character (as a representation of ${\rm GSp}_4(F)$), it is actually self-dual by 
\cite[Proposition 2.3]{Takloo-Bighash2000}.}
$\tau$ with $\tau^*$ and $s$ with $1-s$, we also have
\[
L(1-s,\pi\x\tau^*)=L^{\rm Nov}(1-s,\pi\x\tau^*)P^*(q^{1-s})
\]
with $P^*(q^{1-s})\in\bbC[q^{-s},q^s]$ a factor of $L(2-2s,\omega_\tau^{-1})^{-1}$. Since 
$\gamma^{\rm Nov}(s,\pi\x\tau,\psi)=\gamma(s,\pi\x\tau,\psi)$ and the $L$-factors $L(2s,\omega_\tau)$ and 
$L(2-2s,\omega^{-1}_\tau)$ have no common poles, we conclude that $L^{\rm Nov}(s,\pi\x\tau)=L(s,\pi\x\tau)$ as desired.
\qed

\subsubsection{Two corollaries}
\propref{P:key} has two corollaries. 

\begin{cor}\label{C:non-sc}
\thmref{T:main} holds when $\tau$ is non-supercuspidal.
\end{cor}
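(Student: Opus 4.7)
The plan is to deduce \corref{C:non-sc} as a formal consequence of \propref{P:key} together with \thmref{T:non-sc} of Loeffler-Pilloni-Skinner-Zerbes and Loeffler, with the $\epsilon$-factor identity then reducing to unwinding definitions and invoking the $\gamma$-factor identity \eqref{E:RS=N=Gal gamma}.

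First I would establish the $L$-factor identity. Since $\tau$ is non-supercuspidal and irreducible or induced of Langlands' type, \thmref{T:non-sc} yields
\[
L^{\rm Nov}(s,\pi\x\tau) = L(s,\phi_\pi\ot\phi_{\b{\tau}}),
\]
where $\pi$ is regarded as an irreducible generic representation of ${\rm GSp}_4(F)$ with trivial central character. Chaining this with \propref{P:key}, which asserts $L(s,\pi\x\tau)=L^{\rm Nov}(s,\pi\x\tau)$, produces the first identity of \thmref{T:main} in this case.

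For the $\epsilon$-factor, I would expand the definition
\[
\epsilon(s,\pi\x\tau,\psi) = \omega_\tau(-1)^2\,\gamma(s,\pi\x\tau,\psi)\,\frac{L(s,\pi\x\tau)}{L(1-s,\pi\x\tau^*)}
\]
(here $n=2$, so in particular $\omega_\tau(-1)^2=1$), substitute the $L$-factor identity just proved into both numerator and denominator (observing that $\tau^*$ is again non-supercuspidal and irreducible or induced of Langlands' type whenever $\tau$ is), and invoke \eqref{E:RS=N=Gal gamma} to replace $\gamma(s,\pi\x\tau,\psi)$ by the Galois $\gamma$-factor $\gamma(s,\phi_\pi\ot\phi_{\b{\tau}},\psi)$. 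What remains is to identify $\phi_\pi\ot\phi_{\b{\tau^*}}$ with $(\phi_\pi\ot\phi_{\b{\tau}})^\vee$: since $\phi_\pi$ takes values in ${\rm Sp}_4(\bbC)$ it is self-dual, and $\phi_{\b{\tau^*}}\cong\phi_{\b{\tau}}^\vee$ is immediate from $\tau^*\cong\tau^\vee$ in the irreducible case, while in the Langlands type case with $\b{\tau}=\chi$ a character one simply has $\b{\tau^*}=\chi^{-1}$. Matching against the standard Weil-Deligne definition of $\epsilon(s,\phi_\pi\ot\phi_{\b{\tau}},\psi)$ closes the argument.

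There is no serious obstacle at this stage, since all of the substantive work has already been absorbed into \propref{P:key} (the comparison of Rankin-Selberg and Novodvorsky integrals through the accidental isomorphism ${\rm PGSp}_4\cong\SO_5$) and into the cited \thmref{T:non-sc}. The only remaining checks are the routine duality identifications $(\phi_\pi\ot\phi_{\b{\tau}})^\vee\cong\phi_\pi\ot\phi_{\b{\tau^*}}$ just described, both of which are direct consequences of the local Langlands correspondence for $\GL_2$ and of the shape of the dual group of $\SO_5$.
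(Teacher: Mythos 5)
Your proposal is correct and follows essentially the same route as the paper, which deduces the corollary in one line from \propref{P:key}, \thmref{T:non-sc}, and the $\gamma$-factor compatibility (the paper cites \thmref{T:RS=Gal gamma}; you cite the derived identity \eqref{E:RS=N=Gal gamma}, which is in fact the slightly more precise reference since it also covers the reducible Langlands-type case). The additional bookkeeping you supply on the $\epsilon$-factor side — that $\omega_\tau(-1)^2=1$, that $\tau^*$ inherits the non-supercuspidal Langlands-type hypothesis, and the duality $\phi_{\b{\tau^*}}\cong\phi_{\b\tau}^\vee$ — is accurate and is exactly what the paper's "follows immediately" is silently invoking.
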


\begin{proof}
This follows immediately from \thmref{T:RS=Gal gamma}, \thmref{T:non-sc} and \propref{P:key}.
\end{proof}

\begin{cor}\label{C:non-sc'}
\thmref{T:main'} holds when $\tau$ is non-supercuspidal.
\end{cor}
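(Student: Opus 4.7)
The plan is to reduce to the $\SO_5(F)$ case by a character twist, exploiting the square central character hypothesis. The $L$-factor identity
\[
L^{\rm Nov}(s,\pi\x\tau) = L(s,\phi_\pi\ot\phi_{\b{\tau}})
\]
will be taken directly from \thmref{T:non-sc}, which has no restriction on $\omega_\pi$, so the real task is the $\epsilon$-factor identity. Unwinding the definitions of $\epsilon^{\rm Nov}$ and $\epsilon$, and using the $L$-factor identity both for the given pair and for the dual pair $\pi^\vee\x\tau^*$ (which again satisfies the hypotheses of \thmref{T:non-sc}), the $\epsilon$-factor identity collapses to
\[
\gamma^{\rm Nov}(s,\pi\x\tau,\psi) = \gamma(s,\phi_\pi\ot\phi_{\b{\tau}},\psi).
\]

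To establish this, I would write $\omega_\pi = \mu^2$ and set $\pi_0 := \pi\ot\mu^{-1}$ and $\tau_0 := \tau\ot\mu$. Then $\pi_0$ has trivial central character, so it descends to an irreducible generic representation of $\SO_5(F)$, and $\tau_0$ remains a generic representation of $\GL_2(F)$ that is irreducible or induced of Langlands' type and is non-supercuspidal. On the Galois side the $\mu^{\pm 1}$ twists cancel inside the $8$-dimensional tensor product, giving $\phi_\pi\ot\phi_{\b{\tau}}\cong\phi_{\pi_0}\ot\phi_{\b{\tau_0}}$ and hence equality of the associated $\gamma$-factors. The $\SO_5$ case, furnished by \eqref{E:RS=N=Gal gamma} in the proof of \propref{P:key}, then supplies the bridge
\[
\gamma^{\rm Nov}(s,\pi_0\x\tau_0,\psi) = \gamma(s,\phi_{\pi_0}\ot\phi_{\b{\tau_0}},\psi).
\]

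The remaining step, which I expect to be the main obstacle in the sense that it is the one requiring actual computation, is the termwise invariance $\gamma^{\rm Nov}(s,\pi\x\tau,\psi) = \gamma^{\rm Nov}(s,\pi_0\x\tau_0,\psi)$. My plan is to verify this at the level of the integrals $Z$ and $\tilde Z$. The Godement section $f^\phi_s(-;\omega^{-1})$ is untouched because $\omega_{\pi_0}\omega_{\tau_0} = \omega_\pi\mu^{-2}\cdot\omega_\tau\mu^2 = \omega_\pi\omega_\tau = \omega$. The Whittaker function of $\pi_0$ picks up $\mu^{-1}(\nu(\varrho'(a_1,a_2)))$ and that of $\tau_0$ picks up $\mu(\det(a_2))$; these cancel on the integration domain because of the defining relation $\nu(\varrho'(a_1,a_2)) = \det(a_1) = \det(a_2)$ on ${\rm G}(\SL_2\x\SL_2)(F)$. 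The same computation handles $\tilde Z$, and chaining the three equalities produces the desired $\gamma$-factor identity, from which the $\epsilon$-factor identity follows by the definitions together with \thmref{T:non-sc} applied to the given pair and its dual.
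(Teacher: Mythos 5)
Your proposal is correct and follows essentially the same route as the paper: twist $\pi$ by $\mu^{-1}$ and $\tau$ by $\mu$ to reach the trivial-central-character case, exploit the cancellation coming from $\nu(\varrho'(a_1,a_2))=\det(a_2)$ in the Novodvorsky integrals and the matching cancellation in the $8$-dimensional Galois tensor product, and invoke the $\SO_5$ results. The paper routes the $L$-factor identity through \propref{P:key} and \corref{C:non-sc} rather than citing \thmref{T:non-sc} directly, and it leaves the $\gamma$-factor twist invariance implicit in the phrase ``follows from this and \eqref{E:RS=N=Gal gamma}''; your explicit integral-level verification of $\gamma^{\rm Nov}(s,\pi\x\tau,\psi)=\gamma^{\rm Nov}(s,\pi_0\x\tau_0,\psi)$ fills in exactly what the paper elides.
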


\begin{proof}
To prove this corollary, we simply note that 
\[
L^{\rm Nov}(s,(\pi\ot\mu)\x\tau)=L^{\rm Nov}(s,\pi\x(\tau\ot\mu))
\]
for every character $\mu$ of $F^\x$. Now if $\omega_\pi=\mu^2$ for some character $\mu$, then $\pi_0:=\pi\ot\mu^{-1}$
has trivial central character, and hence 
\[
L^{\rm Nov}(s,\pi\x\tau)
=
L^{\rm Nov}(s,(\pi_0\ot\mu)\x\tau)
=
L^{\rm Nov}(s,\pi_0\x(\tau\ot\mu))
=
L(s,\pi_0\x(\tau\ot\mu))
=
L(s,\phi_{\pi_0}\ot\phi_{\ol{\tau\ot\mu}})
\]
by \propref{P:key} and \corref{C:non-sc}. On the other hand, since  
\[
\phi_{\pi_0}\ot\phi_{\ol{\tau\ot\mu}}
\cong 
\phi_{\pi_0}\ot(\phi_{\b{\tau}}\ot\phi_\mu)
\cong
(\phi_{\pi_0}\ot\phi_\mu)\ot\phi_{\b{\tau}}
\cong
\phi_{\pi}\ot\phi_{\b{\tau}}
\]
as $8$-dimensional representations by \cite[Main Theorem (iv)]{GanTakeda2011}, we get that 
$L^{\rm Nov}(s,\pi\x\tau)=L(s,\phi_\pi\ot\phi_{\b{\tau}})$.
Finally, the result for the $\epsilon$-factors follows from this and \eqref{E:RS=N=Gal gamma}.
\end{proof}

\section{Proof of the main results}\label{S:proof of main}
We prove \thmref{T:main} and \thmref{T:main'} in this section. By \corref{C:non-sc} and \corref{C:non-sc'}, it remains to
prove these Theorems when $\tau$ is supercuspidal. So let us assume that $\tau$ is supercuspidal from now on. 
In view of \eqref{E:shift}, we may further assume that $\omega_\tau$ is unitary.  Moreover, once \thmref{T:main} 
is verified, one can apply the similar argument in \corref{C:non-sc'} to prove \thmref{T:main'}. Therefore, our focus will be 
on prove \thmref{T:main}. By \thmref{T:RS=Gal gamma}, it suffices to establish the identity between the $L$-factors. 
To establish such an identity, we often use the following principle:
\textit{If the $L$-factors $L(s,\pi\x\tau)$ and $L(1-s,\pi\x\tau^\vee)$ have no common poles and the same holds for the 
$L$-factors $L(s,\phi_\pi\ot\phi_{\tau})$ and $L(1-s,\phi_\pi\ot\phi_{\tau^\vee})$, then 
$L(s,\pi\x\tau)=L(s,\phi_\pi\ot\phi_{\tau})$.} 
This is again a simple consequence of \thmref{T:RS=Gal gamma}.\\ 
%Note that we write $\tau$ here instead of $\b{\tau}$ as $\tau$ is irreducible.\\

We begin with a lemma. To state it; however, we need to introduce some notations. Given $\a,\beta\in F^\x$, let us put
$\h{t}(\a,\beta)=\varrho(m_2(t(\la,\beta)))$. This is an element in the diagonal torus of $\SO_5(F)$ 
(cf. \eqref{E:embedding}, \eqref{E:levi of Q} and \eqref{E:matrix})
According to \cite[Proposition 2.2]{Soudry1993}
\footnote{This is in fact a special case of more general results the asymptotic expansions of Whittaker functions 
(\cite{CasselmanShalika1980},\cite{LapidMao2009}).}
, there exists a finite set $\frak{X}_\pi$ of finite functions on 
$F^{\x,2}$, which depends only on (the class of) $\pi$, such that for $v\in\cV_\pi$
\[
W_{v}(\h{t}(\a\beta,\beta))
=
\sum_{\eta\in\frak{X}_\pi}\eta(\a,\beta)f_{v,\eta}(\a,\beta)
\]
for some $f_{v,\eta}\in\cS(F^2)$ (\cite[Proposition 2.2]{Soudry1993}). Since $\eta$ are finite functions, they can be further
written as
\[
\eta(\a,\beta)
=
\eta_1(\a)\eta_2(\beta)|\a|_F^{\frac{3}{2}}|\beta|_F^{2}(\log_{q^{-1}}|\a|_F)^{n_{\eta,1}}(\log_{q^{-1}}|\beta|_F)^{n_{\eta,2}}
\]
for some characters $\eta_1$, $\eta_2$ of $F^\x$ and non-negative integers $n_{\eta,1}$, $n_{\eta,2}$ by
\cite[Lemma 8.1]{JLbook}. 

\begin{lm}\label{L:upper bound}
Let $\pi$ be an irreducible generic representation of $\SO_5(F)$. Then we have 
\[
L(s,\pi\x\tau)\in L(2s,\omega_\tau)\prod_{\eta\in\frak{X}_\pi} L(2s,\eta_2\omega_\tau)^M\bbC[q^{-s},q^s]
\]
where $M=\max\stt{2^{n_{\eta,2}}\mid \eta\in\frak{X}_\pi}$.
\end{lm}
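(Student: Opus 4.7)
My plan starts by using \propref{P:key}, which gives $L(s,\pi\x\tau)=L^{\rm Nov}(s,\pi\x\tau)$. Since $L^{\rm Nov}(s,\pi\x\tau)^{-1}$ generates the fractional ideal spanned by Novodvorsky integrals $Z(s,v\ot u,\phi)$, it suffices to show that every such integral lies in $L(2s,\omega_\tau)\prod_{\eta\in\frak{X}_\pi}L(2s,\eta_2\omega_\tau)^M\bbC[q^{-s},q^s]$. The advantage of switching to $L^{\rm Nov}$ is that the Godement section $f^\phi_s(-;\omega_\tau^{-1})$ packages the factor $L(2s,\omega_\tau)$ cleanly via its built-in Tate integral.

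First, I would apply the Iwasawa decomposition $\GL_2(F)=B_2(F)\GL_2(\frak{o})$ on both factors of $\mathrm{G}(\SL_2\x\SL_2)(F)$, and use smoothness of $v,u,\phi$ to reduce, up to a finite sum, to an integral on the diagonal torus. Modding out by $\iota'(F^\x)Z_2^\square(F)$, I can fix $\beta_1=1$ and write $a_1=t(\alpha_1,1)$, $a_2=t(\alpha_2,\beta_2)$ with $\alpha_1=\alpha_2\beta_2$. A direct computation via the commutative diagram \eqref{E:comm diag} shows
\[
\vartheta(\varrho'(t(\alpha_1,1),t(\alpha_2,\beta_2)))=\hat t(\alpha_2,\beta_2)\in\SO_5(F),
\]
so the substitution $\alpha=\alpha_2/\beta_2$, $\beta=\beta_2$ (i.e. $\alpha_2=\alpha\beta$, $\alpha_1=\alpha\beta^2$) matches the argument $\hat t(\alpha\beta,\beta)$ in the asymptotic expansion. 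At $a_1=t(\alpha\beta^2,1)$ the Godement section unfolds as
\[
f^\phi_s(a_1;\omega_\tau^{-1})=|\alpha|^s|\beta|^{2s}\int_{F^\x}\phi(0,\gamma)\omega_\tau(\gamma)|\gamma|^{2s}d^\x\gamma=|\alpha|^s|\beta|^{2s}L(2s,\omega_\tau)R_\phi(q^{-s}),
\]
with $R_\phi(q^{-s})\in\bbC[q^{-s},q^s]$, extracting the factor $L(2s,\omega_\tau)$ as an overall multiplier. Combined with $W'_u(t(\alpha\beta,\beta))=\omega_\tau(\beta)W'_u(t(\alpha,1))$, the integrand separates essentially into an $\alpha$-part and a $\beta$-part.

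Substituting the asymptotic expansion of $W_v(\hat t(\alpha\beta,\beta))$ turns $Z(s,v\ot u,\phi)$ into a finite sum over $\eta\in\frak{X}_\pi$ of iterated integrals. For fixed $\beta$, the inner $\alpha$-integral couples $\eta_1(\alpha)|\alpha|^{s+3/2}(\log_{q^{-1}}|\alpha|)^{n_{\eta,1}}f_{v,\eta}(\alpha,\beta)$ with $W'_u(t(\alpha,1))$; since $\tau$ is supercuspidal, $W'_u(t(\alpha,1))$ has compact support in $F^\x$, so this integral is a Laurent polynomial in $q^{-s}$ whose coefficients are locally constant, compactly supported functions of $\beta$. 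The remaining $\beta$-integral becomes a Tate-type integral for the character $\eta_2\omega_\tau$ with a logarithmic weight $(\log_{q^{-1}}|\beta|)^{n_{\eta,2}}$. Interpreting this weight as, up to a power of $\log q$, the $n_{\eta,2}$-th derivative in $s$ of an ordinary Tate integrand, each differentiation raises the pole order of $L(2s,\eta_2\omega_\tau)=(1-\eta_2\omega_\tau(\varpi)q^{-2s})^{-1}$ by one; hence the $\beta$-integral lies in $L(2s,\eta_2\omega_\tau)^{n_{\eta,2}+1}\bbC[q^{-s},q^s]$, which sits inside $L(2s,\eta_2\omega_\tau)^M\bbC[q^{-s},q^s]$ via the crude estimate $n_{\eta,2}+1\le 2^{n_{\eta,2}}\le M$.

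The main obstacle is the bookkeeping in the first step, namely tracking the Haar measure on $F^\x Z_2^\square(F)\backslash\mathrm{G}(\SL_2\x\SL_2)(F)$ together with the modular character of $B_2(F)\x B_2(F)$ under the change of variables $(\alpha_1,\beta_1,\alpha_2,\beta_2)\mapsto(\alpha,\beta)$ induced by $\vartheta\circ\varrho'$, so that the exponents of $|\alpha|$ and $|\beta|$ align with the asymptotic expansion and the character appearing in the final Tate integral is genuinely $\eta_2\omega_\tau$ at the variable $2s$ rather than a shifted or twisted variant. Everything after that alignment is routine Tate-integral analysis.
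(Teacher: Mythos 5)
Your proposal is correct and gives the right bound, but it takes a genuinely different route from the paper. The paper stays entirely inside the Rankin-Selberg framework and bounds $\Psi(v\ot\xi_s)$ for good sections in two separate cases: for $\xi_s\in I^{\rm hol}(\tau,s)$, the Iwasawa decomposition of $\SO_4(F)$ plus the asymptotic expansion of $W_v(\hat t(\a\beta,\beta))$ and supercuspidality of $\tau$ produce the factor $\prod_{\eta}L(2s,\eta_2\omega_\tau)^M$ and nothing else; for $\xi_s=M^\dagger_\psi(\tau^*,1-s)\xi^*_{1-s}$, the extra factor $L(2s,\omega_\tau)$ is extracted by appealing to Casselman--Shahidi (\cite[Theorem 5.1]{CasselmanShahidi1998} together with \corref{C:LC}) to conclude $L(2s,\omega_\tau)^{-1}M^\dagger_\psi(\tau^*,1-s)\xi^*_{1-s}\in I^{\rm hol}(\tau,s)$. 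Your route instead invokes \propref{P:key} to pass to $L^{\rm Nov}(s,\pi\x\tau)$ and bounds each $Z(s,v\ot u,\phi)$ directly; this is legitimate since \propref{P:key} is already established and $I^{\rm Nov}_{\pi\x\tau}(s)$ is generated by these integrals. The gain is that the Godement section $f^\phi_s(-;\omega_\tau^{-1})$ carries the $L(2s,\omega_\tau)$ pole intrinsically (your unfolding at $a_1=t(\a\beta^2,1)$ is correct, as is the identification $\vartheta(\varrho'(t(\a\beta^2,1),t(\a\beta,\beta)))=\hat t(\a\beta,\beta)$), so the two-case split into holomorphic vs.\ intertwined sections disappears: both contributions are absorbed at once. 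The remaining analysis — compact support of $W'_u(t(\a,1))$ from supercuspidality, separation of the $\a$- and $\beta$-integrals, and the $s$-derivative interpretation of the logarithmic weight giving pole order $n_{\eta,2}+1\le 2^{n_{\eta,2}}\le M$ — matches the paper's computation with the modular factor $|\a|^{-2}|\beta|^{-2}$ that you flag as bookkeeping, and that factor does come out right. The trade-off is that the paper's argument is self-contained within the Rankin-Selberg machinery (useful if one wants the analogue for general $\SO_{2n+1}\x\GL_r$ where no Novodvorsky counterpart exists), whereas yours exploits the low-rank coincidence more aggressively but at the cost of depending on \propref{P:key} and hence on \corref{C:LC}.
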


\begin{proof}
This is equivalent to show that 
\[
\Psi(v\ot\xi_s)\in L(2s,\omega_\tau)\prod_{\eta\in\frak{X}_\pi} L(2s,\eta_2\omega_\tau)^M\bbC[q^{-s},q^s]
\]
for every $v\in\cV_\pi$ and $\xi_s\in I^{\rm gd}(\tau,s)$. To prove this, we first analyze the poles of $\Psi(v\ot\xi_s)$ 
when $\xi_s\in I^{\rm std}(\tau,s)$. Using the Iwasawa decomposition $\SO_4(F)=V_2(F)\SO_4(\frak{o})$,  the right 
$\SO_4(\frak{o})$-finiteness and the fact that $\xi_s$ is a standard section, $\Psi(v\ot\xi_s)$ can be written as a finite 
sum of the integrals of the form
\[
\int_{F^\x}\int_{F^\x}
W_{v'}(\h{t}(\a\beta,\beta))W'_u(t(\a,1))\omega_\tau(\beta)|\a|_F^{s-2}|\beta|_F^{2s-2}d^\x\a d^\x\beta
\]
for some $v'\in\cV_\pi$ and $u\in\cV_\tau$. Since $\tau$ is supercuspidal, $W'_u(t(\a,1))$ is a Bruhat-Schwartz 
function on $F^\x$ (\cite[Lemma 14.3]{JLbook2}). This, together with the asymptotic expansion of 
$W_{v'}(\h{t}(\a\beta,\beta))$ and the fact that $\cS(F^2)\cong\cS(F)\ot_\bbC\cS(F)$, the integral can be further written as 
a finite sum of the integrals of the form
\begin{align*}
\int_{F^\x}&\int_{F^\x}
f_1(\a)f_2(\beta)\eta_1(\a)\eta_2(\beta)(\log_{q^{-1}}|\a|_F)^{n_{\eta,1}}(\log_{q^{-1}}|\beta|_F)^{n_{\eta,2}}
|\a|_F^{s-\frac{1}{2}}|\beta|_F^{2s}
d^\x\a d^\x\beta\\
&=
\left(
\int_{F^\x}
f_1(\a)\eta_1(\a)(\log_{q^{-1}}|\a|_F)^{n_{\eta,1}}|\a|_F^{s-\frac{1}{2}}d^\x\a
\right)
\left(
\int_{F^\x}
f_2(\beta)\eta_2\omega_\tau(\beta)(\log_{q^{-1}}|\beta|_F)^{n_{\eta,2}}|\beta|_F^{2s}d^\x\beta
\right)
\end{align*}
for some $f_1\in\cS(F^\x)$ and $f_2\in\cS(F)$, where $\cS(F^\x)$ and $\cS(F)$ stands for the space of Bruhat -Schwartz
functions on $F^\x$ and $F$ respectively. Clearly, the integral involving $f_1$ is always absolutely convergent (and 
hence has no poles). On the other hand, poles of the integral involving $f_2$ are contained in that of 
$\prod_{\eta\in\frak{X}_\pi} L(2s,\eta_2\omega_\tau)^M$. Consequently, we find that 
\[
\Psi(v\ot\xi_s)
\in
\prod_{\eta\in\frak{X}_\pi} L(2s,\eta_2\omega_\tau)^M\bbC[q^{-s},q^s]
\]
for every $v\in\cV_\pi$ and $\xi_s\in I^{\rm std}(\tau,s)$. From the definition of $I^{\rm hol}(\tau,s)$, it is clear that some 
assertion holds if we replace $\xi_s$ with any holomorphic section.\\

We also need to analyze the poles of $\Psi(v\ot\xi_s)$ when $\xi_s=M_\psi^\dagger(\tau^*,1-s)\xi^*_{1-s}$ for some 
$\xi^*_{1-s}\in I^{\rm hol}(\tau^*,1-s)$. For this, we use \cite[Theorem 5.1]{CasselmanShahidi1998} and \corref{C:LC} to 
deduce that
\[
L(2s,\omega_\tau)^{-1}M_\psi^\dagger(\tau^*,1-s)\xi^*_{1-s}\in I^{\rm hol}(\tau,s)
\]
for every $\xi^*_{1-s}\in I^{\rm hol}(\tau^*,1-s)$. It follows that 
\[
\Psi(v\ot M^\dagger_\psi(\tau^*,1-s)\xi^*_{1-s})
\in
L(2s,\omega_\tau)\prod_{\eta\in\frak{X}_\pi} L(2s,\eta_2\omega_\tau)^M\bbC[q^{-s},q^s].
\]
for every $\xi^*_{1-s}\in I^{\rm hol}(\tau^*,1-s)$. This completes the proof.
\end{proof}

\begin{Remark}
The characters $\eta_1,\eta_2$ appearing in the asymptotic expansions of the Whittaker functions of $\pi$ can be 
explicitly described by a result in the recent preprint of Chen in \cite{SYChen}.
\end{Remark}

\lmref{L:upper bound} has the following consequence.

\begin{cor}\label{C:temp}
\thmref{T:main} holds when $\pi$ is tempered.
\end{cor}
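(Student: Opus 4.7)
The plan is to combine \lmref{L:upper bound} with the principle stated at the beginning of this section, controlling the poles of all four $L$-factors that enter (analytic and Galois, for both $\tau$ and $\tau^*$). The crucial input from temperedness is that the characters $\eta_1,\eta_2$ of $F^\x$ appearing in the asymptotic expansion of Whittaker functions of $\pi$ recalled before \lmref{L:upper bound} are \emph{unitary}. This should follow from Casselman's criterion of temperedness applied to the exponents of the Jacquet modules of $\pi$ along the proper parabolic subgroups of $\SO_5$: for tempered $\pi$, those exponents are unitary, and they are precisely what produces $\eta_1,\eta_2$ in the asymptotic expansion.

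Granting this unitarity, together with the standing hypothesis that $\omega_\tau$ is unitary, each Tate factor $L(2s,\omega_\tau)$ and $L(2s,\eta_2\omega_\tau)$ occurring in \lmref{L:upper bound} has all its poles in $\Re(s)\le 0$. Hence $L(s,\pi\x\tau)$ has all its poles in $\Re(s)\le 0$. Applying the same reasoning to $\tau^*$, which is still supercuspidal with unitary central character $\omega_\tau^{-1}$, gives that $L(1-s,\pi\x\tau^*)$ has all its poles in $\Re(s)\ge 1$. On the Galois side, the $L$-parameters $\phi_\pi$ and $\phi_\tau$ are both tempered, so $\phi_\pi\ot\phi_\tau$ is a tempered representation of the Weil-Deligne group, and standard facts about Artin $L$-factors place all poles of $L(s,\phi_\pi\ot\phi_\tau)$ in $\Re(s)\le 0$ and those of $L(1-s,\phi_\pi\ot\phi_{\tau^\vee})$ in $\Re(s)\ge 1$.

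Combining the identity of $\gamma$-factors from \thmref{T:RS=Gal gamma} with the two functional equations yields
\[
\frac{L(s,\pi\x\tau)}{L(s,\phi_\pi\ot\phi_\tau)}\cdot\frac{L(1-s,\phi_\pi\ot\phi_{\tau^\vee})}{L(1-s,\pi\x\tau^*)}\in\bbC[q^{-s},q^s]^\x.
\]
Working in the variable $X=q^{-s}$, the first factor is a ratio of polynomials whose zeros and poles all lie in $|X|\ge 1$, while the second has all zeros and poles in $|X|\le q^{-1}<1$. Since the product is a unit in $\bbC[X,X^{-1}]$ and these two regions are disjoint, each factor must itself be a unit; evaluating at $X=0$ and using the normalization that both $L(s,\pi\x\tau)^{-1}$ and $L(s,\phi_\pi\ot\phi_\tau)^{-1}$ equal $1$ there then forces $L(s,\pi\x\tau)=L(s,\phi_\pi\ot\phi_\tau)$, and the $\epsilon$-factor identity follows from \thmref{T:RS=Gal gamma}. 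The main obstacle will be rigorously justifying the unitarity of $\eta_1,\eta_2$ in the precise form appearing in \lmref{L:upper bound}; this is where temperedness of $\pi$ enters essentially, and the argument breaks down for general generic $\pi$, explaining why the non-tempered supercuspidal case of \thmref{T:main} will require additional techniques.
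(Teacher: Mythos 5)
Your argument follows essentially the same route as the paper's: control the poles via \lmref{L:upper bound}, use temperedness to locate them in $\Re(s)\le 0$ on the analytic side and the corresponding fact for tempered Weil--Deligne parameters on the Galois side, then conclude by the principle stated at the start of \S\ref{S:proof of main} (equivalently, your factorization of the $\gamma$-identity into two units is the same mechanism). The one imprecision is your claim that temperedness forces $\eta_1,\eta_2$ to be \emph{unitary}. That is generally too strong: for tempered $\pi$ that is not a discrete series, Casselman's criterion only guarantees that the (normalized) exponents are non-negative, not zero, so $\eta_2$ may well fail to be unitary. This is exactly what the paper cites from \cite[Proposition III.2.2]{Waldspurger2003}, namely non-negativity of exponents. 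Fortunately the weaker fact is all you actually use: non-negativity of the exponent of $\eta_2$ already gives $|\eta_2(\varpi)|\le 1$, hence the poles of $L(2s,\eta_2\omega_\tau)$ lie in $\Re(s)\le 0$ (using that $\omega_\tau$ is unitary), and your conclusion about the disjointness of pole sets for $L(s,\pi\times\tau)$ and $L(1-s,\pi\times\tau^*)$ goes through unchanged. So the proof is correct once the justification is weakened from ``unitary'' to ``non-negative exponent''.
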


\begin{proof}
Since $\pi$ is tempered and $\omega_\tau$ is unitary, the poles of $L(2s,\eta_2\omega_\tau)$ are contained in the 
left half-plane $\Re(s)\le 0$ for every $\eta\in\frak{X}_\pi$.  This follows from the fact that the exponents of a tempered 
representations are non-negative (\cite[Proposition III.2.2]{Waldspurger2003}). 
By \lmref{L:upper bound}, we see that 
the $L$-factors $L(s,\pi\x\tau)$ and $L(1-s,\pi\x\tau^*)$ have no common poles. 
On the other hand, since the associated $L$-parameters are also tempered, same assertions hold for 
$L(s,\phi_\pi\ot\phi_\tau)$ and $L(1-s,\phi_\pi\ot\phi_{\tau^*})$. Now the corollary follows from the principle mentioned in
the beginning of this section.
\end{proof}

\subsection{Proof of \thmref{T:main}}
Now we are ready to prove \thmref{T:main}. In view of \thmref{T:theta}, it is nature to separate the proof into two cases, 
depending on whether $\phi_\pi$ is endoscopy or not. Here we follow the terminologies in 
\cite[Section 6]{GanTakeda2011}. In the proofs, we often regard $\pi$ as a representation of ${\rm GSp}_4(F)$ with trivial 
central character.

\subsubsection{Endoscopy case}
In this case, $\phi_\pi=\phi_{\tau_1}\oplus\phi_{\tau_2}$ for some irreducible generic representations $\tau_1$, $\tau_2$ 
of $\GL_2(F)$ with $\omega_{\tau_1}=\omega_{\tau_2}=1$. According to \cite[Section 7]{GanTakeda2011}, $\pi$ is 
the theta lift of the representation $\tau_1\bt\tau_2$ of ${\rm GSO}_4(F)$. On the other hand, by \thmref{T:theta}, it 
remains to prove the case when $\tau$ is isomorphic to an unramified twisted of $\tau_1^\vee$ or $\tau_2^\vee$. 
Assume without loss of generality that $\tau$ is isomorphic to an unramified twisted of $\tau_1^\vee$, so that 
$\tau_1$ is also supercuspidal. If $\tau_2$ is an essentially discrete series representation 
and $\tau_1$ and $\tau_2$ are not isomorphic, then $\pi$ is a discrete series representation by 
\cite[Theorem 5.6 (ii)]{GanTakeda2011}. In this case, \thmref{T:main} follows from \corref{C:temp}. If $\tau_1$
and $\tau_2$ are isomorphic, then $\pi$ is a tempered representation by 
\cite[Lemma 5.1 (a), Theorem 8.2 (i)]{GanTakeda2011b} and hence in this case, \thmref{T:main} again follows from
\corref{C:temp}.\\

Suppose that $\tau_2$ is an induced representation. Then since $\omega_{\tau_2}=1$, $\tau_2$ is inducing from the 
characters $\chi$ and $\chi^{-1}$ of $F^\x$. We may assume $|\chi(\varpi)|\ge 1$. Then since $\pi$ is generic, we must
have $\chi\neq |\cdot|_F^{-\frac{1}{2}}$ and $\pi\cong (\tau_1\ot\chi^{-1})\rtimes\chi$ by 
\cite[Lemma 5.2 (a), Theorem 8.2 (v)]{GanTakeda2011b}. Here we follow the notations in 
%\cite{SallyTadic1993}, 
\cite[Page 35]{RobertsSchmidt2007}, meaning that $\pi$ is a normalized induced representation of 
${\rm GSp}_4(F)$ inducing from its Siegel parabolic subgroup with the data $(\tau_1\ot\chi^{-1})\bt\chi$. 
Now by \lmref{L:upper bound} and \cite[Lemma 5.2, Type (X)]{SYChen}, we find that 
\[
L(s,\pi\x\tau)\in L(2s,\omega_\tau)^{M+1}\bbC[q^{-s},q^s].
\]
Since $\omega_\tau$ is assumed to be unitary, the $L$-factors $L(s,\pi\x\tau)$ and $L(1-s,\pi\x\tau^*)$ have no common
poles. On the other hand, since 
\[
\phi_\pi\ot\phi_\tau
\cong 
(\phi_{\tau_1}\oplus\phi_{\tau_2})\ot\phi_\tau
\cong
(\phi_{\tau_1}\ot\phi_{\tau})\oplus (\phi_{\tau_1}\ot\phi_{\chi})\oplus (\phi_{\tau_1}\ot\phi_{\chi^{-1}})
\]
as $8$-dimensional representations, we see that the poles of $L(s,\phi_\pi\ot\phi_\tau)$ are also lie in the vertical line 
$\Re(s)=0$. Consequently, the $L$-factors $L(s,\phi_\pi\ot\phi_\tau)$ and $L(1-s,\phi_\pi\ot\phi_{\tau^*})$ also have no 
common poles. This proves the \thmref{T:main} when $\phi_\pi$ is endoscopy.

\subsubsection{Non-endoscopy case}
In this case, the $\pi$ has a non-zero theta lift to split ${\rm GSO}_6(F)$, and the description of $\phi_\pi$ can be found in 
\cite[Lemma 6.2, Theorem 5.6 (iii)]{GanTakeda2011}. More precisely, $\phi_\pi$ is either an irreducible $4$-dimensional 
representation or $\phi_\pi=\phi\oplus\phi^\vee$ for some irreducible $2$-dimensional representation $\phi$, 
whose dual $\phi^\vee$ is not isomorphic $\phi$. In this first situation, $\pi$ is a discrete series representation and hence
we can apply \corref{C:temp} to conclude the proof.\\ 

Suppose that we are in the second situation. Then $\phi$ corresponds to an irreducible supercuspidal representation 
$\sigma$ of $\GL_2(F)$, and hence $\phi_\pi=\phi_\sigma\oplus\phi_{\sigma^\vee}$. Comparing this with
\cite[Table A.7 (VII)]{RobertsSchmidt2007}, we have
\[
\pi\cong\omega^{-1}_\sigma\rtimes\sigma
\]
with $\omega^{-1}_\sigma\neq 1$ and $\omega^{-1}_\sigma\neq\eta|\cdot|_F^{\pm 1}$, where $\eta$ is a quadratic 
character of $F^\x$ such that $\sigma\cong\sigma\ot\eta$. The notation means that $\pi$ is 
a normalized induced representation of ${\rm GSp}_4(F)$ inducing from its the Klingen subgroup with the data 
$\sigma\bt\omega^{-1}_{\sigma}$. We indicate that $\pi$ is not a discrete series representation, and is tempered if and 
only if $\omega_\sigma$ is unitary. The rest of the proof is devoted to verify \thmref{T:main} for this case.
First note that by \lmref{L:upper bound} and \cite[Lemma 5.2, (VII)]{SYChen}, we have
\[
L(s,\pi\x\tau)\in \cL(s,\pi\x\tau)\bbC[q^{-s},q^s]
\]
where 
\[
\cL(s,\pi\x\tau)
=
L(2s,\omega_\tau)L(2s,\omega_\tau\omega_\sigma)^M L(2s,\omega_\tau\omega^{-1}_\sigma)^M.
\]
On the other hand, since
\[
\phi_\pi\ot\phi_\tau
\cong
(\phi_\sigma\ot\phi_\tau)\oplus(\phi_{\sigma^\vee}\ot\phi_\tau)
\]
as $8$-dimensional representations, we have 
$L(s,\phi_\pi\ot\phi_\tau)=L(s,\phi_\sigma\ot\phi_\tau)L(s,\phi_{\sigma^\vee}\ot\phi_\tau)$.
The proof of the identity $L(s,\pi\x\tau)=L(s,\phi_\pi\ot\phi_\tau)$ consists of three steps.\\

The first step is to show 
\begin{equation}\label{E:1st step}
L(s,\phi_\pi\ot\phi_\tau)\in\cL(s,\pi\x\tau)\bbC[q^{-s},q^s].
\end{equation}
For this we first recall that by \cite[Proposition 1.2]{GelbartJacquet1978}, 
$L(s,\phi_\sigma\ot\phi_\tau)$ has a pole at $s_0\in\bbC$ if and only if $\sigma\cong\tau^\vee\ot|\cdot|_F^{-s_0}$, in
which case the pole is simple. The assertion being trivial when $L(s,\phi_\pi\ot\phi_\tau)=1$. So let us assume that 
$L(s,\phi_\pi\ot\phi_\tau)$ has poles. Then we have 
\[
\sigma\cong\tau^\vee\ot|\cdot|_F^{-s_0}
\quad\text{or}\quad
\sigma^\vee\cong\tau^\vee\ot|\cdot|_F^{-s_0^\vee}
\]
for some complex numbers $s_0$ and $s_0^\vee$. In the first case, since 
$\omega_\sigma\omega_\tau=|\cdot|_F^{-2s_0}$ by considering the central characters on both sides.
We find that
\[
L(s,\phi_\sigma\ot\phi_\tau)\in L(2s,\omega_\tau\omega_\sigma)^M\bbC[q^{-s},q^s].
\]
Similar argument shows 
\[
L(s,\phi_{\sigma^\vee}\ot\phi_\tau)\in L(2s,\omega_\tau\omega^{-1}_\sigma)^M\bbC[q^{-s},q^s]
\]
and the assertion follows.\\

Our second step is to show 
\begin{equation}\label{E:2nd step}
L(s,\pi\x\tau)\in L(s,\phi_\pi\ot\phi_\tau)\bbC[q^{-s},q^s].
\end{equation}
Note that by \eqref{E:1st step}, if $\cL(s,\pi\x\tau)$ and $\cL(1-s,\pi\x\tau^\vee)$ have no common poles, then 
$L(s,\pi\x\tau)=L(s,\phi_\pi\ot\phi_\tau)$. Note also that $\cL(s,\pi\x\tau)$ and 
$\cL(1-s,\pi\x\tau^\vee)$ have common poles would imply either $\omega_\sigma=|\cdot|_F^{\pm 2}$ or 
$\omega_\sigma^2=|\cdot|_F^{\pm 2}$. With these in mind, we now prove the second assertion. Certainly, it suffices to 
verify the assertion when $\cL(s,\pi\x\tau)$ and $\cL(1-s,\pi\x\tau^\vee)$ have common poles. For this we introduce 
an auxiliary $\la\in\bbR$ and consider the representation 
\[
\pi_{\la}
\cong
(\omega_\sigma^{-1}|\cdot|_F^{-2\la})\rtimes (\sigma\ot|\cdot|_F^{\la}).
\]
Note that the central character of $\pi_\la$ is again trivial. Let $\epsilon>0$ be small so that $\pi_\la$ remains irreducible 
for $|\la|<\epsilon$. Then for $0<|\la|<\epsilon$, we have $L(s,\pi_\la\x\tau)=L(s,\phi_{\pi_\la}\ot\phi_\tau)$ 
by the observations just mentioned. So for such $\la$, one has
\begin{equation}\label{E:2nd step'}
\Psi(v_\la\ot\xi_s)\in L(s,\phi_{\pi_\la}\ot\phi_\tau)\bbC[q^{-s},q^s]
\end{equation}
for every $v_\la\in\cV_{\pi_\la}$ and $\xi_s\in I^{\rm hol}(\tau,s)$. We would like to show this also holds when $\la=0$.
To do so, we use the holomorphicity of Whittaker functionals and the formal Laurent series. More precisely, by
\cite[Section 2]{CasselmanShalika1980} and \cite[Section 3]{Shahidi1978},  for a given $v\in\cV_\pi$, there exists for each
$\la$ a function $W_\la$ in the Whittaker model of $\pi_{\la}$ such that 
\begin{itemize} 
\item 
$W_0=W_v$;
\item 
there is an open compact subgroup $K$ such that $W_\la(gk)=W_\la(g)$ for all $g\in\SO_5(F)$, $k\in K$ and $\la$;
\item
for each $g\in\SO_5(F)$, we have $W_{\la}(g)\in\bbC[q^{-\la}, q^\la]$ as a function in $\la$.
\end{itemize}
Let $v_\la\in\cV_{\pi_\la}$ be such that its associated Whittaker function is $W_\la$. Then the second 
property implies that there exists $N>0$, which is independent of $\la$ such that 
\begin{equation}\label{E:vanish}
W_{v_\la}(\h{t}(\a\beta,\beta))=0
\end{equation}
if $\a$ or $\beta$ does not contain in $\varpi^{-N}\frak{o}$. For $\xi_s\in I^{{\rm std}}(\tau,s)$, the formal Laurent series 
$\Psi_{v_\la\ot\xi_s}(X)$ can be written as 
\[
\Psi_{v_{\la}\ot\xi_s}(X)
=
\sum_{m\in\bbZ}X^m\Psi^m(v_{\la}\ot\xi_s)
\]
with $\Psi^{m}(v_{\la}\ot\xi_s)$ given by \eqref{E:coeff of FLS}. Form this discussions in \S\ref{SS:Laurent series}, we 
know that $\Psi^m(v_{\la}\ot\xi_s)$ is independent of $s$. Moreover, by \eqref{E:vanish}, one sees that it can be written 
as a finite sum, each of which gives an element in $\bbC[q^{\la},q^\la]$ as a function of $\la$. 
It follows that $\Psi^m(v_{\la}\ot\xi_s)\in\bbC[q^{-\la},q^\la]$ as a function of $\la$ for all $m$.\\ 

To proceed, observe that $L(s,\phi_{\pi_\la}\ot\phi_\tau)=L(s+\la,\phi_\pi\ot\phi_\tau)$ for $|\la|<\epsilon$. 
Thus if $P(X)\in\bbC[X]$ is such that $P(0)=1$ and $L(s,\phi_\pi\ot\phi_\tau)=P(q^{-s})^{-1}$, then 
$L(s,\phi_\la\ot\phi_\tau)=P(q^{-\la-s})$. Now let us write 
\begin{equation}\label{E:finite sum}
P_{\pi\x\tau}(q^{-\la}X)\Psi_{v_\la\ot\xi_s}(X)=\sum_{m\in\bbZ}c_m(\la)X^m.
\end{equation}
Then we have $c_m(\la)\in\bbC[q^{-\la},q^\la]$ as a function of $\la$. As explained in 
\cite[(4.3)]{JPSS1983} and \cite[Section 8.2]{Kaplan2013}, \eqref{E:2nd step'} implies that for each $\la$ with 
$0<|\la|<\epsilon$, there is $N_\la>0$ such that $c_m(\la)=0$ whenever $|m|>N_\la$. We claim that there exists $N>0$ 
such that $c_m(\la)=0$ for all $|\la|<\epsilon$ and $|m|>N$. Indeed, since each non-zero $c_m(\la)$ 
has finitely many roots, while there are uncountable many $0<|\la|<\epsilon$, there must exists $N>0$ such that 
$c_m(\la)=0$ for all $0<|\la|<\epsilon$ and $|m|>N$. Since $c_m(\la)$ are continuous, the claim follows. The implies
that \eqref{E:finite sum} is a finite sum. In particular, by substituting $\la=0$ and $X=q^{-s}$ in \eqref{E:finite sum}, 
we get that 
\begin{equation}\label{E:la=0}
\Psi(v\ot\xi_s)\in L(s,\phi_\pi\ot\phi_\tau)\bbC[q^{-s}, q^s]
\end{equation}
for all $v\in\cV_\pi$ and $\xi_s\in I^{\rm std}(\tau,s)$, and hence for all $\xi_s\in I^{\rm hol}(\tau,s)$ by the definition of 
holomorphic sections.\\

To verify \eqref{E:2nd step}, it remains to show that \eqref{E:la=0} still valid if $\xi_s=M^\dagger_\psi(\tau,s)\xi^*_{1-s}$
for some $\xi^*_{1-s}\in I^{\rm hol}(\tau^\vee,1-s)$. But this follows immediately from the identity 
$\gamma(s,\pi\x\tau^\vee,\psi)=\gamma(s,\phi_\pi\ot\phi_{\tau^\vee},\psi)$ and \eqref{E:la=0} since
\[
\frac{\Psi(v\ot M_\psi^\dagger(\tau^\vee,1-s)\xi^*_{1-s})}
{L(s,\phi_\pi\ot\phi_\tau)}
=
\epsilon(s,\phi_\pi\ot\phi_{\tau^\vee},\psi)
\frac{\Psi(v\ot\xi^*_{1-s})}{L(1-s,\phi_\pi\ot\phi_{\tau^\vee})}\in\bbC[q^{-s}, q^s].
\]
This finishes the proof of our second step.\\

The last step is to show that 
$L(s,\pi\x\tau)=L(s,\phi_\pi\ot\phi_\tau)=L(s,\phi_\sigma\ot\phi_\tau)L(s,\phi_{\sigma^\vee}\ot\phi_\tau)$.
We divide the proof into two cases depending on whether $\tau^\vee$ is isomorphic to an unramified twist of $\tau$.
Suppose first that $\tau^\vee$ is not isomorphic to an unramified twist of $\tau$. In this case, we can not have both 
$\sigma$ and $\sigma^\vee$ are isomorphic to unramified twists of $\tau^\vee$. Consequently, the $L$-factors
$L(s,\phi_\pi\ot\phi_\tau)$ and $L(1-s,\phi_\pi\ot\phi_{\tau^\vee})$ have no common poles, and hence 
$L(s,\pi\x\tau)=L(s,\phi_\pi\ot\phi_\tau)$ by \eqref{E:2nd step}.\\

Suppose that $\tau^\vee$ is isomorphic to an unramified 
twist of $\tau$. Then by \eqref{E:shift} and because the similar identity holds for $L(s,\phi_\pi\ot\phi_\tau)$, we may
further assume that $\tau$ is self-dual, i.e. $\tau^\vee\cong\tau$. Now if $\sigma$ is not isomorphic to unramified twists of 
$\tau^\vee$, then we have $L(s,\phi_\pi\ot\phi_\tau)=1$ and the assertion follows again from \eqref{E:2nd step}. So let us
assume that $\sigma\cong\tau^\vee\ot|\cdot|_F^{-s_0}$ for some $s_0\in\bbC$. Then we have 
$\sigma^\vee\cong\tau\ot|\cdot|_F^{s_0}\cong\tau^\vee\ot|\cdot|_F^{s_0}$ and hence by
\cite[Corollary 1.3]{GelbartJacquet1978}
\[
L(s,\phi_\pi\ot\phi_\tau)
=
\left(1-q^{-\kappa(s-s_0)}\right)^{-1}\left(1-q^{-\kappa(s+s_0)}\right)^{-1}
\]
with $\kappa=2$ if $\tau\cong(\tau\ot\eta)$ and $\kappa=1$ otherwise, where $\eta$ is the unramfied quadratic character 
of $F^\x$. Similarly, we have 
\[
L(1-s,\phi_\pi\ot\phi_{\tau^\vee})
=
\left(1-q^{-\kappa(1-s-s_0)}\right)^{-1}\left(1-q^{-\kappa(1-s+s_0)}\right)^{-1}.
\]
We are done if we can show that these $L$-factors have no common poles. 
Suppose in contrary that they have common poles. Then from the
shapes of the $L$-factors, we must have 
\[
s_0=\pm\frac{1}{2}+\frac{2\pi n\sqrt{-1}}{\kappa\ln q}
\]
for some $n\in\bbZ$. It follows that $\omega^{-1}_\sigma=\omega^{-1}_\tau|\cdot|_F^{\pm 1}$ by the assumption
$\sigma^\vee\cong\tau^\vee\ot|\cdot|_F^{s_0}$. But this contradicts to the assumption on $\omega^{-1}_\sigma$ since
$\tau\ot\omega^{-1}_\tau\cong\tau^\vee\cong\tau$. This concludes the proof of the non-endoscopy case, and hence 
the proof of \thmref{T:main}.\qed

\subsection*{Acknowledgements}
Part of this works was done when the author was a postdoctoral fellow in the Institute of Mathematics at Academia Sinica. 
He would like to thank the institute for providing a very friendly and stimulating research atmosphere and Ming-Lun Hsieh 
for the support. Thanks also due to Shih-Yu Chen for the helpful discussions and to David Loeffler for pointing out his 
results. In particular, David Loeffler's comments help us simplify our original arguments and moreover, obtain 
assumption-free results. This work is partially supported by MOST grant number 110-2115-M-032-001-MY2.

\bibliographystyle{alpha}
\bibliography{ref}
\end{document}